\newcommand{\ds}{\displaystyle}
\newcommand{\myspace}{\\[0.3cm]}
\newcommand{\eqand}[1]{\quad \text{#1} \quad}
\newcommand{\eqfor}{\quad \text{for }}
\newcommand\numberthis{\addtocounter{equation}{1}\tag{\theequation}}
\newcommand{\di}[1]{\, d#1}
\newcommand{\myvector}[2]{\begin{bmatrix} #1 \\ #2 \end{bmatrix}}
\newcommand{\myvectorw}[2]{\begin{bmatrix} #1 \\[0.1cm] #2 \end{bmatrix}}
\newcommand{\ha}[1]{\widehat{#1}}
\newcommand{\la}[1]{\bm{\mathrm{#1}}}
\newcommand{\ti}[1]{\widetilde{#1}}
\newcommand{\bg}{\mathbf{g}}
\newcommand{\hbg}{\widehat{\bg}}
\newcommand{\hw}{\widehat{w}}
\newcommand{\hx}{\widehat{x}}
\newcommand{\bG}{\la{G}}
\newcommand{\bM}{\la{M}}
\newcommand{\bMt}{\ti{\la{M}}}
\newcommand{\br}{\la{r}}
\newcommand{\btA}{\ti{\la{A}}}
\newcommand{\bhC}{\ha{\la{C}}}
\newcommand{\btu}{\ti{\la{u}}}
\newcommand{\bhw}{\ha{\la{w}}}
\newcommand{\bDhat}{\widehat{\la{D}}}
\newcommand{\bcO}{\la{\mathcal{O}}}
\newcommand{\bcT}{\la{\mathcal{T}}}
\newcommand{\cF}{{\cal{F}}}
\newcommand{\cL}{\mathcal{L}}
\newcommand{\cQ}{\mathcal{Q}}
\newcommand{\ou}{\overline{u}}
\newcommand{\oU}{\overline{U}}
\newcommand{\ougs}{\ou^{\mathrm{GS}}}
\newcommand{\ow}{\overline{w}}
\newcommand{\oW}{\overline{W}}
\newcommand{\owgs}{\ow^{\mathrm{GS}}}
\newcommand{\obmu}{\overline{\la{\mu}}}
\newcommand{\obomega}{\overline{\la{\omega}}}
\newcommand{\tA}{\ti{A}}
\newcommand{\tC}{\ti{C}}
\newcommand{\tb}{\ti{b}}
\newcommand{\tq}{\widetilde{q}}
\newcommand{\tu}{{\widetilde{u}}}
\newcommand{\tw}{{\widetilde{w}}}
\newcommand{\tv}{\ti{v}}
\newcommand{\tx}{{\widetilde{x}}}
\newcommand{\tmu}{\widetilde{\mu}}
\newcommand{\tomega}{\widetilde{\omega}}
\newcommand{\bfR}{\mathbb{R}}
\newcommand{\cLout}{\begin{bmatrix}
                         0 & -\tv \Lt^T \\
                         \frac{1}{\tv}\Lt & 0
                     \end{bmatrix}}
\newcommand{\dtau}{\partial_{\tau}}
\newcommand{\g}{\gamma}
\newcommand{\G}{\la{\Gamma}}
\newcommand{\ghat}{\widehat{\g}}
\newcommand{\Ghat}{\ha{\G}}
\newcommand{\Lt}{{L_{\tau}}}
\newcommand{\Skn}{\la{S}_k^{(n)}}
\newcommand{\Ts}{T^{(2)}}
\newcommand{\ukn}{\tu_k^{(n)}}
\newcommand{\wkn}{\tw_k^{(n)}}
\newcommand{\xmax}{x_{\max}}
\newcommand{\txmax}{\ti{x}_{\max}}
\newcommand{\ymax}{y_{\max}}
\newcommand{\td}{\text{---}}
\newcommand{\der}[3]{\dfrac{\partial^{#1} #2}{\partial #3^{#1}}}
\newcommand{\oovs}[2]{\left\langle #1, #2 \right\rangle_{1/v^2}}
\newcommand{\ltrn}[2]{\left\langle #1, #2 
    \right\rangle_{l^2(\mathbb{R}^n)}}
\newcommand{\normltrtn}[1]{\left\| #1
    \right\|_{l^2(\mathbb{R}^{2n})}}
\newcommand{\usuh}[2]{\left\langle #1, #2 \right\rangle_{U^*U}}
\newcommand{\ipyt}[2]{\left\langle #1, #2 \right\rangle_{y,\theta}}
\newcommand{\ipxit}[2]{\left\langle #1, #2 \right\rangle_{\xi,\theta}}
\newcommand{\ootv}[2]{\left\langle #1, #2 \right\rangle_{1/\tv}}
\newcommand{\normootv}[1]{\left\| #1 \right\|_{1/\tv}}
\newcommand{\iptv}[2]{\left\langle #1, #2 \right\rangle_{\tv}}
\newcommand{\normiptv}[1]{\left\| #1 \right\|_{\tv}}
\newcommand{\ootviptv}[2]{\left\langle #1, #2
    \right\rangle_{1/\tv, \tv}}
\newcommand{\Lttx}[2]{\left\langle #1, #2 \right\rangle_{L^2[0,\txmax]}}
\newcommand{\ipgh}[2]{\left\langle #1, #2 \right\rangle_{\ghat}}
\newcommand{\llrr}[2]{\left\llangle #1, #2 \right\rrangle}
\newcommand{\rank}{\mathrm{rank \ }}
\newcommand{\Range}{\mathrm{Range \ }}
\newcommand{\diag}{\mathrm{diag \ }}
\newcommand{\const}{\mathrm{const. \ }}
\numberwithin{equation}{section}
\newtheorem{theorem}{Theorem}[section]
\newtheorem{definition}[theorem]{Definition}
\newtheorem{lemma}[theorem]{Lemma}
\newtheorem{proposition}[theorem]{Proposition}
\newtheorem{remark}[theorem]{Remark}
\newtheorem{problem}[theorem]{Problem}
\title{Direct, nonlinear inversion algorithm for hyperbolic problems via
projection-based model reduction}
\author{Vladimir Druskin\footnotemark[2]%
\and Alexander Mamonov\footnotemark[3]%
\and Andrew E. Thaler\footnotemark[4]%
\and Mikhail Zaslavsky\footnotemark[2]}
\begin{document}
\maketitle

\renewcommand{\thefootnote}{\fnsymbol{footnote}}

\footnotetext[2]{Schlumberger--Doll Research Center, Cambridge, MA, USA
02139}
\footnotetext[3]{University of Houston, Houston, TX, USA 77004}
\footnotetext[4]{Institute for Mathematics and its Applications,
    University of Minnesota, College of Science and Engineering,
    Minneapolis, MN, USA 55455.  The research of AET was supported in
    part by Schlumberger and the Institute for Mathematics and its
Applications with funds provided by the National Science Foundation
through NSF Award 0931945.} 

\begin{abstract}
    We estimate the wave speed in the acoustic wave equation from
    boundary measurements by constructing a reduced-order model (ROM)
    matching discrete time-domain data. The state-variable
    representation of the ROM can be equivalently viewed as a Galerkin
    projection onto the Krylov subspace spanned by the snapshots of the
    time-domain solution.  The success of our algorithm hinges on the
    data-driven Gram--Schmidt orthogonalization of the snapshots that
    suppresses multiple reflections and can be viewed as a discrete form
    of the Marchenko--Gel'fand--Levitan--Krein algorithm.  In particular,
    the orthogonalized snapshots are localized functions, the (squared)
    norms of which are essentially weighted averages of the wave speed.
    The centers of mass of the squared orthogonalized snapshots provide
    us with the grid on which we reconstruct the velocity. This grid is
    weakly dependent on the wave speed in traveltime coordinates, so the
    grid points may be approximated by the centers of mass of the
    analogous set of squared orthogonalized snapshots generated by a
    known reference velocity. We present results of inversion
    experiments for one- and two-dimensional synthetic models.
\end{abstract}

\textbf{Keywords.} 
    Gel'fand--Levitan, model reduction, optimal grids, Galerkin method,
    full waveform inversion 
\\ 

\textbf{AMS Subject Classifications.}
    86A22, 35R30, 41A05, 65N21

\pagestyle{myheadings}
\thispagestyle{plain}
\markboth{Druskin, Mamonov, Thaler, Zaslavsky}{Inversion via 
projection-based model reduction}


\section{Introduction}\label{sec:intro}

In seismic reflection tomography, one attempts to utilize measurements
of elastic waves to create an (approximate) image of a region in the
earth's subsurface. In this paper, we present a nonlinear tomographic
inversion method that can be placed within the so-called full waveform
inversion (FWI) framework.  Full waveform inversion algorithms employ
the full equations of motion and utilize as much of the information
contained in the recorded waveforms as possible to image the material
properties of the region of interest \cite{Fichtner:2011:FSW}. 

The most common numerical approach to FWI is nonlinear optimization,
i.e.,  minimization of the  misfit between the measured elastic field
and the forward model --- see, e.g.,
\cite{Virieux:2009:AOF,Fichtner:2011:FSW} (and the references within).
The images created via the optimization approach tend to have high
resolution;   however, the conventional FWI optimization procedure
suffers from a few computational and theoretical difficulties.  First,
the equations and models are typically discretized on a fine grid to
ensure the synthetic data sets are accurately computed --- the model
parameters tend to be on the order of billions \cite{Fichtner:2011:FSW}.
Even with the help of adjoint-state methods, the solution to 3D FWI
problems can take days or weeks of processing time.  The second
difficulty with the optimization problem is that the quadratic misfit
functional is nonconvex and has many local minima
\cite{Fichtner:2011:FSW}.  Gradient-based algorithms will tend to get
stuck in one of these local minima, rather than the true minimum, unless
the initial model is extremely close to the true model.  Several
approaches have been developed to mitigate the effects of the
nonconvexity of the misfit functional  --- see \cite{Virieux:2009:AOF,
Fichtner:2011:FSW} and the references therein --- though they come at
a cost.

Another, direct, nonlinear approach originated from several celebrated
works by Marchenko, Krein, Gel'fand, and Levitan  (MKGL)
\cite{Marchenko:1950:SPT, Gelfand:1955:ODD, Krein:1951:SIS,
Krein:1953:OTF, Krein:1954:OME, Levitan:1964:DDE}. The main idea of this
approach is the reduction of the inverse problem to a nonlinear integral
equation with Volterra (triangular) structure that can be solved
explicitly. It yields a very powerful tool for inverse hyperbolic
problems in 1D   \cite{Gopinath:1971:ITE,
Sondhi:1971:DVT,Symes:1979:IBV,Burridge:1980:GLM,
Santosa:1982:NSI,Habashy:1991:GGL} (and the references therein).  The
main difficulty involved in the application of this layer-stripping-type
approach in the multidimensional setting is the fact that the scattering
data is overdetermined.  Recently, progress was made in extending the
Marchenko and Gel'fand--Levitan approaches to 2D and 3D settings, see,
e.g., \cite{Kabanikhin:2004:DMS, Wapenaar:2013:TDS}, though more work
must be done to improve the \textit{lateral} resolution of the images in
each layer.  We also point out the related work by Bube and Burridge
\cite{Bube:1983:ODI}, in which the authors solve the 1D problem by
deriving a finite-difference scheme that corresponds exactly to a
continuum problem with a piece-wise constant coefficient.  

In this paper we apply the discrete MKGL approach (that can be expressed
via the Lanczos algorithm well known in the linear algebra community)
within the reduced-order model (ROM) framework.  The ROM is obtained by
matching discrete time-domain data and its finite-difference
interpretation yields a data-driven discretization scheme. 

Reduced-order models recently became popular tools for the solution
of \textit{frequency-domain, diffusion-dominated} inverse problems, such
as diffusive optical tomography, the quasi-stationary Maxwell equations,
etc.\ \cite{deSturler:2013:NPI,Druskin:2007:OCM}. The system's order was
reduced by projecting the original system onto a pre-computed or
dynamically-updated basis of frequency-domain solutions, and then using
the projected system as a fast proxy in the optimization process. A 
subspace size sufficient for accurate approximation of the forward
solver is critical for the success of the method.

As we shall see, the MKGL approach applied within the ROM not only
allows us to obtain images directly without optimization, but also to
compute sufficiently accurate ROMs with a single Galerkin basis
obtained for a background (e.g., constant coefficient) model.


\subsection{Reduced-order models and optimal grids}\label{subsec:ROM}

Our inversion algorithm employs a projection-based ROM.  In model order
reduction, one replaces a large-scale problem with a smaller, more
computationally efficient model that retains certain features of the
larger model --- see, e.g., the review article by Antoulas and Sorensen
\cite{Antoulas:2001:ALS} and the book by Antoulas
\cite{Antoulas:2005:ALS} (and the references therein).  

We now describe in some detail a particular ROM that is closely related
to the model we construct in this paper.  Consider the following
one-dimensional problem for $x \in [0,1]$:
\begin{equation}\label{eqn:model_problem} u''(x) - \lambda u(x) = 0,
    \qquad u'(0) = -1, \ u(1) = 0, 
\end{equation} 
where $\lambda \in \mathbb{C}\setminus ]-\infty,0[$ is a constant.  The
\emph{impedance function}, also known as the \emph{Neumann-to-Dirichlet
map}, \emph{Poincar\'e--Steklov operator}, or \emph{Weyl function}, is
defined by 
\[
    f(\lambda) \equiv u(0).
\]
We wish to construct a small discrete model (a ROM) that accurately
computes the impedance function $f(\lambda)$ for, say, $\lambda \in
\left[\lambda_1,\lambda_2\right] \subset [0,\infty[$.  

To that end, we consider the staggered grid (see Figure~\ref{fig:grid}
in \S~\ref{subsec:leapfrog} in the appendix):
\[
    0 = x_1 = \ha{x}_0 < \ha{x}_1 < x_2 < \ha{x}_2 < \cdots < 
        \ha{x}_{N-1} < x_N \le 1;
\]
the stepsizes are $h_j \equiv x_{j+1} - x_j$ and $\ha{h}_j \equiv
\ha{x}_j - \ha{x}_{j-1}$ for $j = 1,\ldots,N$.  A three-term
finite-difference approximation of \eqref{eqn:model_problem} on this
grid is \cite{Druskin:1999:GSR}
\[
    \begin{aligned}
        &\frac{1}{\ha{h}_j} \left[\frac{U_{j+1} - U_j}{h_j} - 
            \frac{U_j - U_{j-1}}{h_{j-1}}\right] - \lambda U_j = 0, 
            \qquad j = 2,3,\ldots,N \\
            &\frac{1}{\ha{h}_1}\left(\frac{U_2 - U_1}{h_1}\right) - 
            \lambda U_1 = -\frac{1}{\ha{h}_1}, \\
        &U_{N+1} = 0,
    \end{aligned}
\]
where $U_j \approx u(x_j)$.  This may be written in matrix form as
\[
    \la{A}\la{U} + \lambda\la{U} = -\frac{1}{\ha{h}_1}\la{e}_1,
\]
where $\la{A} \in \mathbb{R}^{N\times N}$, $\la{U} \in \mathbb{R}^N$,
and $\la{e}_1 \in \mathbb{R}^N$ contains a $1$ in its first component
and zeros elsewhere.  The \emph{discrete impedance function} is then
defined by 
\[
    f_N(\lambda) \equiv U_1 \approx u(0) = f(\lambda).
\]
The goal is to choose the stepsizes $h_j$, $\ha{h}_j$ in such a way that
$f_N(\lambda)$ is an excellent approximation $f(\lambda)$ with $N$
small.  

For example, if the grid spacing is uniform and $N \gg 1$, $\la{U}$ will
be a good approximation to $u$ over the entire interval $[0,1]$; in
particular, $f_N(\lambda)$ will be a good approximation to $f(\lambda)$.
However, if we are only interested in obtaining a good approximation to
the solution at $x = 0$ (i.e., the impedance function), taking $N \gg 1$
is inefficient.  A proper reduced-order model should have
$f_N(\lambda)$ very close to $f(\lambda)$ for $N$ small.

As Kac and Krein observed \cite{Kac:1974:OSF}, the discrete impedance
function $f_N$ may be written as a Stieltjes continued fraction
\cite{Stieltjes:1995:RSL} with the grid steps $h_j$, $\ha{h}_j$ as
coefficients; in particular, 
\begin{equation*}
    f_N(\lambda) =  \cfrac{1}{\ha{h}_1\lambda
                  + \cfrac{1}{h_1
                  + \cfrac{1}{\ha{h_2}\lambda
                  + \cdots
                  + \cfrac{1}{h_{N-1}
                  + \cfrac{1}{\ha{h}_N\lambda
                  + \cfrac{1}{h_N} } } } } }.
\end{equation*}
If the grid steps are judiciously chosen, $f_N$ will be
a Pad\'e approximant of $f$ and therefore converge to $f$ exponentially
as $N \rightarrow \infty$ \cite{Druskin:1999:GSR, Ingerman:2000:OFD,
Druskin:2002:TPF}.  In other words, $f_N$ will be an excellent
approximation to $f$ even if $N$ is quite small.  These grids are thus
known in the literature as \emph{optimal grids}, and have been
successfully applied in other related contexts as well
\cite{Druskin:2000:GSR, Asvadurov:2000:ADG}.  There is also an intimate
connection between optimal grids and the Galerkin method.  In
particular, to every $N$-term Galerkin approximation there corresponds a
stable three-term finite-difference scheme of no more than $N$ nodes
that has the same impedance function \cite{Druskin:2002:TPF}; we will
exploit a similar idea when we construct our ROM based on Galerkin
projection.  Finally, optimal grids have been generalized to
variable-coefficient Sturm--Liouville problems as well
\cite{Borcea:2002:OFD}.

Optimal grids have also been applied to inverse Sturm--Liouville
problems \cite{Borcea:2002:OFD}.  Their usefulness in inverse problems
stems from the fact that optimal grids are \emph{weakly dependent} on
the variable coefficients of the problem.  This extraordinary property
allows one to use the optimal grids constructed for the constant
coefficient Sturm--Liouville problem \eqref{eqn:model_problem} as the
grids in an inversion algorithm \cite{Borcea:2002:OFD}, and has also
been used in the context of inverse spectral problems
\cite{Borcea:2005:OCL} and electrical impedance tomography
\cite{Borcea:2008:EIT, Borcea:2010:PRN}.  This idea of the weak
dependence of optimal grids on the PDE coefficients plays a crucial role
in our inversion algorithm as well, although we should emphasize that it
only holds in traveltime coordinates in the context of the wave equation
(whereas it holds in physical coordinates in the case of
Sturm--Liouville problems).


\subsection{Direct inversion algorithm for FWI in 1D}\label{subsec:ours}

To fix the idea, let us consider the one-dimensional acoustic wave
equation on $[0,\xmax] \times [0,T]$:
\begin{equation*}
    -u_{xx}(x,t) + \frac{1}{v^2(x)}u_{tt}(x,t) = 0, 
        \qquad u(x,0) = b(x), \ u_t(x,0) = 0,
\end{equation*}
subject to appropriate boundary conditions at $x = 0$ and $x = \xmax$.
The goal of the forward problem is to determine $u$ for $t \in [0,T]$
given the wave speed $v$ and the source distribution $b$ (which we assume
is a smooth approximation of the delta function).  We study the inverse
problem of estimating $v$ given the source distribution $b$ and $2n$
equally-spaced samples of the time-domain \emph{transfer function} 
\[
    f(t) \equiv \int_{0}^{\xmax} b(x) u(x,t) \frac{1}{v^2(x)} \di{x}
    \approx \frac{1}{v^2(0)}u(0,t).
\]
In other words, we are given $b$ and $f_k \equiv f(k\tau)$ for $k =
0,\ldots,2n-1$ and a timestep $\tau > 0$ and wish to approximate the
wave speed $v$ in the interior of the domain $[0,\xmax]$.  We will see
that the choice of $\tau$ plays a crucial role in the quality of the
inversion results, but we can typically take $\tau$ to be near the
Nyquist--Shannon limit of the cutoff frequency of $b$.  The transfer
function $f$ is called the single-input/single-output (SISO) transfer
function in control theory terminology, implying that it was obtained
via single-source (input) and single-receiver (output) measurements.   

The core of our inversion algorithm is essentially a discrete version of
the Krein--Gel'fand--Levitan--Marchenko method \cite{Marchenko:1950:SPT,
Gelfand:1955:ODD, Krein:1951:SIS, Krein:1953:OTF, Krein:1954:OME,
Levitan:1964:DDE}; also see the works by Gopinath and Sondhi
\cite{Gopinath:1971:ITE, Sondhi:1971:DVT}, Symes \cite{Symes:1979:IBV},
Burridge \cite{Burridge:1980:GLM}, Santosa \cite{Santosa:1982:NSI}, and
Habashy \cite{Habashy:1991:GGL}
for more on the Gel'fand--Levitan method in the continuous case.  A
summary of our application of this method is as follows.  We consider
the $2n$ time-domain \emph{snapshots} 
\begin{equation*}
    u_k(x) \equiv u(x,k\tau) \eqfor k = 0,\ldots,2n-1,
\end{equation*}
and we define a ``matrix'' $U$ of the first $n$ snapshots, i.e., 
\[
    U \equiv \left[u_0(x), \ldots, u_{n-1}(x) \right].
\]
Because $b(x)$ is an approximation of the delta function, it is
localized near $x = 0$.  Then, due to causality, the matrix $U$ will be
an approximation of an upper triangular matrix (reminiscent of the
``upper triangular'' kernel from Gel'fand--Levitan theory
\cite{Gelfand:1955:ODD}).  We may orthogonalize the snapshots via the
Gram-Schmidt process and obtain the $QR$ decomposition $U =
V\bm{\mathcal{R}}$. Since $U$ is already approximately upper triangular,
the ``matrix'' $V$ of the orthogonalized snapshots will be an
approximation of the identity matrix, i.e., the orthogonalized snapshots
are localized.  In physical terms, orthogonalization suppresses multiple
reflections. 

Unfortunately, we do not have access to the true snapshot matrix $U$
because the wave speed $v$ is unknown (so the snapshots are also
unknown).  However, as we discuss in \S~\ref{sec:inversion}, in
traveltime coordinates the centers of mass of the squared
orthogonalized snapshots are \emph{weakly dependent} on the wave speed
$v$.  Thus we compute the snapshots $u^0_k(x)$ corresponding to a
reference velocity $v^0$, which we typically take to be constant.  After
orthogonalization, the centers of mass of the reference squared
orthogonalized snapshots approximate the centers of mass of the true
squared orthogonalized snapshots, and, hence, provide us with a grid
for inversion.  (This is similar to the weak dependence of the grid on
the parameters in \cite{Borcea:2002:OFD}.)

In our approach, we orthogonalize the snapshots via the Lanczos
algorithm without normalization.  In this case, the (squared) norm of
each orthogonalized snapshot contains information about the magnitude of
$v$ near the center of mass of the squared orthogonalized snapshot; thus
the orthogonalized snapshots not only provide us with a grid for
inversion, but they also provide us with knowledge about the wave speed
on that grid.  

The crucial feature of our orthogonalization process is that, depending
the available data, the computation of these norms can be performed in
two isomorphically equivalent ways.  If the velocity, and, hence, the
snapshots, are known, the norms are computed explicitly in the Lanczos
algorithm.  On the other hand, if only the time-domain data is
available, we show that the norms correspond to parameters of a ROM that
interpolates the discretely sampled time-domain data.  In fact, this
data-driven, projection-based ROM corresponds to the Galerkin method on
a (Krylov) subspace spanned by the snapshots and may be constructed
solely from the discrete time-domain data.  The spectral coefficients of
the Galerkin approximation satisfy a three-term finite-difference
recursion that reproduces the data $f_k$ exactly, and the coefficients
of the finite-difference matrix are related to the norms of the
orthogonalized snapshots in a simple way. (For more on the construction
of ROM based on projection onto polynomial and rational Krylov
subspaces, see the book by Antoulas \cite{Antoulas:2005:ALS} and the
paper by de Villemagne and Skelton \cite{deVillemagne:1987:MRU};
Gallivan, Grimme, and Van Dooren \cite{Gallivan:1996:ARL} and Grimme
\cite{Grimme:1997:KPM} discuss the relationship between model order
reduction via Krylov projection and rational interpolation.)  

We should also discuss the important work of Bube and Burridge
\cite{Bube:1983:ODI}, in which the authors solve the 1D inversion
problem using a finite-difference scheme and Cholesky factorization.
Our method also involves a finite-difference scheme and a Cholesky
factorization (see Remark~\ref{rem:QR}), but the fundamental difference
between our finite-difference scheme and that of Bube and Burridge is
that ours is equivalent to Galerkin projection onto the space of
orthogonalized snapshots.  Indeed, the novelty of the ROM approach
discussed in this paper is data-driven Galerkin discretization that
yields localization of the basis functions.

In summary, our algorithm may be outlined as follows:
\begin{enumerate}
    \item Record the data $f_k = f(k\tau)$ for $k = 0,\ldots,2n-1$ and
        $\tau$ near the Nyquist limit.
    \item Compute the snapshots $u^0_k(x) = u^0(x,k\tau)$ corresponding
        to the reference velocity $v^0(x)$ (typically we take $v^0(x)
        \equiv v(0)$ for all $x \in [0,\xmax]$).
    \item Orthogonalize the snapshots $u^0_k$ via the Lanczos process
        (equivalently, the Gram--Schmidt procedure) --- the grid nodes
        $\tx_j$ (in traveltime coordinates) we use for our inversion are
        given by the centers of mass of these squared reference
        orthogonalized snapshots.
    \item From the recorded data $f_k$, construct
        the projection-based ROM that interpolates $f_k$ for $k = 0,
        1, \ldots,2n-1$. Use it to compute the norms of the true
        orthogonalized snapshots.
    \item The estimate of the velocity at the grid point $x_j$ is
        proportional to a ratio of the norms of the
        $j\textsuperscript{th}$ true and reference orthogonalized
        snapshots.
\end{enumerate}

Since our algorithm is direct, it avoids the difficulties associated
with iterative gradient-based algorithms that we described earlier.  In
particular, our algorithm cannot become trapped in a local minimum.
Additionally, we only need to solve a single forward problem (to compute
the reference snapshots in step $2$), and the reference velocity for
this forward problem is typically very simple (e.g., constant).
Finally, one may use our algorithm as a direct imaging algorithm (as we
do in this paper), or as a nonlinear preconditioner (similar to that in
\cite{Borcea:2014:MRA}) which generates a reasonable initial model $m^0$
close to the true model $m$ that can be used in least-squares
optimization.  

The remainder of our paper is organized as follows.  In
\S~\ref{sec:problem}, we define the problem.  We discuss the
orthogonalization of the snapshots in \S~\ref{sec:continuum}.
Construction of our data-driven, interpolatory ROM, based on Galerkin
projection onto the Krylov subspace spanned by the snapshots, is
discussed in \S~\ref{sec:transformation}.  We develop our inversion
algorithm in \S~\ref{sec:inversion} and demonstrate it via several
numerical experiments in \S~\ref{sec:numerics}.  We describe a
two-dimensional extension of our algorithm in \S~\ref{sec:2D}.  Detailed
proofs of many of the lemmas are given in the appendix.


\section{Problem formulation}\label{sec:problem}

We start with the Cauchy problem for the Green's function for the
one-dimensional wave equation on $[0,\xmax]\times [0,\infty[$:
\begin{equation}\label{eqn:w1} 
    Ag+g_{tt}=0,  \qquad  
    g_x|_{x=0}=0, \ g|_{x=\xmax}=0, \quad 
    g|_{t=0}=\delta(x+0), \ g_t|_{t=0}=0,
\end{equation}
where
\[
    A \equiv -v^2 \frac{d^2}{dx^2}
\]
with the Neumann--Dirichlet boundary conditions from \eqref{eqn:w1}, and
the wave speed $v(x)$ is a regular enough, positive function on
$[0,\xmax]$.  Here and throughout the paper, $\delta(x+0)$ denotes the
``right-half'' Dirac delta function and satisfies the normalization
\[
    \int_0^{\infty} \delta(x+0) \di{x} = 1.
\]

We study the inverse problem of determining $v(x)$ from the
boundary data $g|_{x=0}$.  For regular enough boundary data and
for all $x\in [0,\xmax]$ there is a unique mapping associating the data
$g(0,t)$ to the velocity $v(x)$ where $t\in[0,2\tx(x)]$ and the
\emph{slowness (traveltime) coordinate transformation} is
\begin{equation}\label{eqn:slowness} 
    \tx(x) \equiv \int_0^x \frac{1}{v(x')}\di{x'};
\end{equation} 
see, e.g., \cite{Gelfand:1955:ODD, Krein:1951:SIS, Krein:1953:OTF,
Krein:1954:OME, Levitan:1964:DDE, Gopinath:1971:ITE, Sondhi:1971:DVT,
Burridge:1980:GLM}.

The Cauchy problem \eqref{eqn:w1} can be equivalently rewritten on
$[0,\xmax]\times ]-\infty, \infty[$ as
\begin{equation}\label{eqn:w2}
    Ag+g_{tt}=\delta(x+0)\delta(t)_t, 
    \qquad  g_x|_{x=0}=0, \  g|_{x=\xmax}=0, \quad 
    g|_{t<0}=0.
\end{equation}

We introduce the weighted inner product $\llangle
\cdot, \cdot \rrangle$ on $L_2[0,\xmax]$, defined by
\begin{equation}\label{eqn:iprod2}
    \left\llangle u, w \right\rrangle  \equiv 
    \int_{0}^{\xmax}  u(x) w(x) \frac{1}{v^2(x)} \di{x}.
\end{equation}
We note $A$ is self adjoint and positive definite with respect to
$\left\llangle \cdot, \cdot \right\rrangle$; real functions of $A$
(continuous on the spectrum of $A$) are
self adjoint with respect to this weighted inner product as well.  

The solution of \eqref{eqn:w1} can be formally written via an operator
function as
\begin{equation}\label{eqn:specdec} 
    g(x,t) =\cos\left(t\sqrt{A}\right)\delta(x+0) = 
    \int_0^{\infty} \cos\left(t\sqrt{\lambda}\right)\rho(x,\lambda) 
    \di{\lambda},
\end{equation}
where 
\begin{equation}\label{eqn:rho}
    \rho(x,\lambda) \equiv \sum_{l=1}^\infty \delta(\lambda-\lambda_l) 
        \frac{z_l(0)}{v(0)^2}z_l(x)
\end{equation}
is the vector spectral measure associated with $A$ and $\left(\lambda_l,
z_l(x)\right)$ are eigenpairs of $A$. Here we have used the fact that,
because $A$ is self-adjoint with respect to the inner product
$\llangle\cdot,\cdot\rrangle$, the eigenfunctions can be chosen to be
orthonormal, i.e., $\llangle z_l,z_k\rrangle = \delta_{lk}$ where
$\delta_{lk}$ is the Kronecker delta.  Note also that the eigenfunctions
$z_l$ must satisfy the homogeneous Neumann--Dirichlet boundary
conditions associated with $A$, namely $z_{l,x}(0) = 0$ and $z_l(\xmax)
= 0$.

We use the Green's function from \eqref{eqn:w1} to study a problem with
a variable source wavelet $q(t)_t$ (in place of $\delta(t)_t$ in
\eqref{eqn:w2}).  We assume $q\in L_1]-\infty,\infty[$ is an even,
sufficiently smooth approximation of $\delta(t)$ with nonnegative
Fourier transform
\begin{equation}\label{eqn:Fq}
    \tq\left(s^2\right) \equiv \cF(q)(s) = 
        \int_{0}^\infty 2\cos(ts) q(t) \di{t}.
\end{equation}
To fix the idea, we use the Gaussian 
\begin{equation}\label{eqn:gaussian}
    q(t) = \frac{1}{\sigma\sqrt{2\pi}} 
    \exp\left(-\frac{t^2}{2\sigma^2}\right)
\end{equation}
for some $\sigma>0$; in this case, 
\begin{equation}\label{eqn:gaussian_FT}
    \tq\left(s^2\right) = \exp\left(-\frac{\sigma^2s^2}{2}\right).
\end{equation}
We should choose $\sigma$ to be small so that \eqref{eqn:gaussian} gives
a good approximation to $\delta(t)$.  Physically, $\sigma$ gives a
measure of the duration of the source wavelet $q(t)_t$ in time, and, as
can be seen from \eqref{eqn:gaussian_FT}, $\sigma$ is inversely related
to the bandwidth of this wavelet\footnote{Strictly speaking, the
Gaussian pulse has an infinite bandwidth; however, for all practical
purposes, the decay of $\widetilde{q}$ is rapid enough that we may
speak of an ``effective bandwidth,'' namely values of $s^2$ beyond which
$\widetilde{q}(s^2)$ is sufficiently small.}.  As we will see (most
prominently in \S~\ref{sec:numerics}), the time-domain measurement
sampling rate is closely related to $\sigma$.

This choice of $q$ yields the equation 
\[
    A\ha{g} + \ha{g}_{tt} = 
        \delta(x+0)q(t)_t, \qquad  
    \ha{g}_x|_{x=0}=0, \ \ha{g}|_{x=\xmax}=0, 
    \quad \lim_{t\to -\infty}\ha{g}=0
\]
on $[0,\xmax]\times ]-\infty ,\infty[$.  The solution to this equation
can be written via a convolution integral as
\begin{equation}\label{eqn:g_convolution} 
    \ha{g}(x,t) = \int_{-\infty}^{\infty} H(t-t')g(x,t-t')q(t') \di{t'},
\end{equation}
where the Green's function $g$ solves \eqref{eqn:w1} and $H$ is the
Heaviside step function.

Let $\ha{u}(x,t) \equiv \ha{g}(x,t)+\ha{g}(x,-t)$.  Then, using $\ha{g}
=\cF^{-1} \left[\cF (Hg)\cF(q)\right]$ (which follows from
\eqref{eqn:g_convolution} and the convolution theorem for Fourier
transforms) and \eqref{eqn:Fq}, we obtain
\begin{equation}\label{eqn:Fe}
    \ha{u}(x,t) = \frac{2}{\pi}\int_{0}^\infty \cos(ts)\Re 
        \left[\cF (Hg)\cF(q)\right] \di{s}
    = \frac{2}{\pi}\int_{0}^\infty \cos(ts)\Re 
        \left[\cF (Hg)\right]\tq\left(s^2\right) \di{s}.
\end{equation}
For $q=\delta(t)$, from \eqref{eqn:w2}, \eqref{eqn:specdec},
and \eqref{eqn:g_convolution} we have
\[
    \ha{u}(x,t)=g(x,t)=2\int_0^{\infty} \cos(ts)\rho\left(x,s^2\right)s
\di{s}
\]
for $t\ge 0$.  Comparing this with \eqref{eqn:Fe} (and taking
$\tq\left(s^2\right)=1$), we find $\Re \left[\cF (Hg)\right] = \pi
\rho\left(x,s^2\right) s$.  Combining this with \eqref{eqn:Fe}, for
general $q$ we have
\begin{equation}
    \begin{split}\label{eqn:ufun}
        \ha{u}(x,t) 
	    &= 2\int_{0}^\infty\cos(ts)\rho\left(x,s^2\right)s
	        \tq\left(s^2\right)\di{s} \\
            &= \int_{0}^\infty\cos\left(t\sqrt{\lambda}\right)
	        \rho(x,\lambda) 
    	        \tq(\lambda)\di{\lambda}\\ 
	    &= \cos\left(t\sqrt{A}\right)\tq(A)\delta(x+0).
    \end{split}
\end{equation}
This implies $\ha{u}$ solves the following Cauchy
problem  on $[0,\xmax]\times [0,\infty[$:
\begin{equation}\label{eqn:cauchyb}
    A\ha{u}+\ha{u}_{tt}=0, \qquad  
    \ha{u}_x|_{x=0}=0, \ \ha{u}|_{x=\xmax}=0, \quad 
    \ha{u}|_{t=0}=\tq(A)\delta(x+0), \ \ha{u}_t|_{t=0}=0.
\end{equation}
 
Our measurements are defined for $t \in [0,T]$ by $f(t) \equiv
\ha{u}(0,t)$.  In practice, we only take measurements at the discrete
times $k\tau$ for $k = 0,\ldots,2n-1$, where $(2n-1)\tau = T$ and $\tau$
is the sampling timestep.  We choose a time discretization step
$\tau > 0$ consistent with the Nyquist--Shannon sampling of the cutoff
frequency of $\tq$, i.e., we take $\tau \sim \sigma$.  Our goal is to
solve the following problem.
\begin{problem}\label{eqn:problem_1}
    Estimate $v|_{[0, \tx^{-1}(T)]}$ from $f_k \equiv \ha{u}(0,
    k\tau)$, $k=0,\ldots,2n-1$,  provided  $\tx^{-1}(T)\le\xmax$.
\end{problem}

We will see that the choice of $\tau$ influences the quality of the
inversion results.  


\section{Continuum interpretation}\label{sec:continuum}

The solution \eqref{eqn:ufun} at the discrete times $k\tau$ is 
\begin{equation}\label{eqn:snapshot}
    \begin{aligned}
        \ha{u}(x, k \tau) &= \cos\left(k \tau \sqrt{A}\right)\ti{q}(A) 
	    \delta(x+0) \\
        &= \cos\left(k\arccos \cos\left(\tau\sqrt{A}\right)\right) 
	    \tq(A) \delta(x+0) \\
        &= T_k\left(\cos\left(\tau\sqrt{A}\right)\right) \tq(A) 
	    \delta(x+0),
    \end{aligned}
\end{equation}
where $T_k$ is the $k\textsuperscript{th}$ Chebyshev polynomial of the
first kind.

We define the propagation operator
$P\equiv\cos\left(\tau\sqrt{A}\right)$.  Then, from the spectral
representation \eqref{eqn:ufun}, we can equivalently rewrite
\eqref{eqn:snapshot} as
\begin{equation}\label{eqn:u_eta} 
    \ha{u}(x,k\tau) = T_k(P)\tq(A)\delta(x+0) = 
        \int_{-1}^1 T_k(\mu) \eta(x,\mu) \di{\mu},
\end{equation}
where 
\begin{multline}\label{eqn:eta}
    \eta(x,\mu) \equiv 2\sum_{j=-\infty}^\infty \mathrm{sgn}(j)
        \tq \left(\frac{(\arccos(\mu) +2j\pi)^2}{\tau^2} \right) 
        \frac{\arccos(\mu)+2j\pi}{\tau^2} \cdot \\ 
        \rho \left(x,\frac{(\arccos(\mu) + 2j\pi)^2}{\tau^2}\right) 
        \frac{1}{\sqrt{1-\mu^2}}
\end{multline}
and we take $\mathrm{sgn}(0) \equiv 1$;  the infinite summation is due
to the multiplicity of $\arccos$ (see
\S~\ref{subsec:derivation_of_eta} in the appendix for a derivation
of \eqref{eqn:u_eta}--\eqref{eqn:eta}).  Then the data are given by
\begin{equation}\label{eqn:data}
    f_k = \int_{-1}^1 T_k(\mu) \eta_0 (\mu) d\mu,
\end{equation}
where $\eta_0(\mu) \equiv \eta(0,\mu)$.  

We define
\begin{equation}\label{eqn:c}
    c \equiv f_0 = \int_{-1}^1 \eta_0(\mu) \di{\mu}.
\end{equation}
If we assume $\tq$ is positive (this assumption holds for the Gaussian
source $q(t)$ in \eqref{eqn:gaussian}), then \eqref{eqn:eta} and
\eqref{eqn:c} imply $c^{-1} \eta_0(\mu) \di{\mu}$ is a probability
measure.  We also conjecture that $\int_{-1}^s
c^{-1}\eta_0(\mu)\di{\mu}$ has at least $n$ points of increase on
$[-1,1]$.  This can be proven if the wavespeed $v$ is regular enough,
but for the sake of brevity and clarity we provide a qualitative
explanation in \S~\ref{subsec:eta0_increase}.

\begin{definition}\label{def:snapshots}
    Suppose $\tq(A)$ is positive definite (this is true for the Gaussian
    source in \eqref{eqn:gaussian}, for example).  Let $u(x,t)$ be the
    solution to the following Cauchy problem on $[0,\xmax]\times
    [0,\infty[$:
    \begin{equation}\label{eqn:cauchys}
        Au + u_{tt} = 0, \qquad 
        u_x|_{x=0} = 0, \ u|_{x=\xmax} = 0, \quad
	u|_{t=0} = b, \ 
	    u_t|_{t=0} = 0,
    \end{equation}
    where
    \begin{equation}\label{eqn:b}
        b(x) \equiv v(0)\tq(A)^{1/2}\delta(x+0).
    \end{equation}
    (This equation is equivalent to \eqref{eqn:cauchyb} except for the
    initial condition --- in fact, $\ha{u}(x,t) =
    v(0)^{-1}\tq(A)^{1/2}u(x,t)$.)
    Then, for $k = 0,\ldots,2n-1$, the \emph{snapshots} are defined by 
    \begin{equation}\label{eqn:snapshots_def}
	    u_k(x) \equiv u(x,k\tau) 
	    = \cos\left(k\tau\sqrt{A}\right)b(x) 
	    = T_k\left(\cos\left(\tau\sqrt{A}\right)\right)b(x)
	    = T_k(P)b(x).
    \end{equation}
\end{definition}

From the definition of the snapshots and the fact that functions of $A$
(such as $\tq(A)^{1/2}$) are self adjoint with respect to the inner
product $\llangle \cdot, \cdot \rrangle$, the data satisfy
\begin{equation}\label{eqn:f_k}
    f_k = \left\llangle u_0, u_k \right\rrangle = \left\llangle b,
    T_k(P)b\right\rrangle \eqfor k = 0,\ldots,2n-1.
\end{equation}

Recall that
\begin{equation}\label{eqn:umatrix}
    U \equiv \left[u_0(x), u_1(x), \ldots, u_{n-1}(x)\right].
\end{equation}
Sometimes for shorthand and for $w, u \in L_2[0,\xmax]$ we will write
$w^* u\equiv\left\llangle w, u \right\rrangle$, so by referring to
\eqref{eqn:umatrix} as a matrix we imply the corresponding
multiplication rules. In particular, multiplication from the left by
another matrix $W=\left[w_0(x),\ldots,w_{n-1}(x)\right]$ of the same
form is defined as
\begin{equation}\label{eqn:WstarU}
    W^* U \equiv 
    \begin{bmatrix}
        \llangle w_0, u_0 \rrangle & \llangle w_0, u_1 \rrangle & 
	    \ldots & \llangle w_0, u_{n-1} \rrangle \\
        \llangle w_1, u_0 \rrangle & \llangle w_1, u_1 \rrangle & 
	    \ldots & \llangle w_1, u_{n-1} \rrangle \\
        \vdots & \vdots & \ddots & \vdots \\
        \llangle w_{n-1}, u_0\rrangle & \llangle w_{n-1}, u_1\rrangle & 
	    \ldots & \llangle w_{n-1}, u_{n-1} \rrangle
    \end{bmatrix} 
    \in \mathbb{R}^{n \times n}.
\end{equation}

If our assumption that $\int_{-1}^s c^{-1}\eta_0(\mu)
    \di{\mu}$ has at least $n$ points of increase is satisfied, then
    $\rank U = n$ and $\Range U$ is the Krylov subspace 
\[
    \mathcal{K}_n(u_0, P) = \hbox{span} 
    \left\{ u_0, Pu_0, \ldots, P^{n-1} u_0 \right\};
\]
see, e.g., \S~3.2.1 of the book by Liesen \cite{Liesen:2012:KSM} and
references therein. In particular, $U^*U$ is symmetric and positive
definite since $U$ is of full rank.

In the remainder of this section, we derive an algorithm for
orthogonalizing the snapshots.  As we will see, the orthogonalized
snapshots are localized in some sense, so they provide the key to our
inversion algorithm.  


\subsection{First-order finite-difference Galerkin formulation}
\label{subsec:finite_difference}

Because the snapshots can be written in terms of Chebyshev polynomials
as in \eqref{eqn:snapshots_def} and the Chebyshev polynomials satisfy a
three-term recurrence relation, the snapshots satisfy the following
second-order time-stepping Cauchy problem in operator form:
\begin{equation}\label{eqn:tstepping}
    \frac{u_{k+1} - 2 u_k + u_{k-1}}{\tau^2} = \xi(P) u_k,\qquad  
    u_0 = b,\ u_{-1}=u_{1},
\end{equation}
where  
\begin{equation}\label{eqn:xi}
    \xi(x) \equiv -\frac{2}{\tau^2}(1-x).  
\end{equation}
From a Taylor expansion (for regular enough
$u$), we obtain 
\[
    \xi(P) u=
    -\frac{2}{\tau^2}\left[I-\cos\left(\tau\sqrt{A}\right)\right]u=
    -Au+O\left(\|(\tau A)^2u\|\right),
\]
i.e., \eqref{eqn:tstepping} can be viewed as an explicit time
discretization of \eqref{eqn:cauchys} that reproduces the snapshots
exactly.  

We now state several useful lemmas; the proofs which are not given here
are contained in the appendix.  In the first lemma, we
transform \eqref{eqn:cauchys} to slowness coordinates.
\begin{lemma}\label{lem:trans_to_slow}
    Suppose $u$ solves \eqref{eqn:cauchys}, and let 
    \[
        \tu(\tx,t) \equiv u(x(\tx),t), 
        \quad
        \tv(\tx) \equiv v(x(\tx)),
        \eqand{and}
        \txmax \equiv \tx(\xmax),
    \]
    where the (invertible) slowness coordinate transformation $\tx(x)$
    is defined in \eqref{eqn:slowness}.  Then $\tu$ is the solution of
    the following Cauchy problem on $[0,\txmax]\times [0,\infty[$:
    \begin{equation}\label{eqn:cauchyslow}
        \tA \tu+\tu_{tt}=0, \qquad  
        \tu_\tx|_{\tx=0}=0, \ \tu|_{\tx = \txmax}=0, \quad 
	\tu|_{t=0}=\ti{b}, \ 
        \tu_t|_{t=0}=0, 
    \end{equation}
    where
    \[
        \ti{b}(\tx) \equiv \tq\left(\tA\right)^{1/2}\delta(\tx+0) 
	    \eqand{and}
	    \tA \tu \equiv -\tv\frac{\partial}{\partial \tx}
	        \left(\frac{1}{\tv}\frac{\partial\tu}{\partial \tx}\right)
    \]
    with the Neumann--Dirichlet boundary conditions in
    \eqref{eqn:cauchyslow}.  The operator $\tA$ is self adjoint and
    positive definite with respect to the inner product
    $\ootv{\cdot}{\cdot}$, where
    \[
        \ootv{\tu}{\tw} \equiv \int_0^{\txmax} 
            \tu(\tx)\tw(\tx)\frac{1}{\tv(\tx)}\di{\tx}.
    \]
\end{lemma}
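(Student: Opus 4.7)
The proof proceeds by a direct change-of-variables computation, exploiting that the slowness transformation $\tx\mapsto x$ from \eqref{eqn:slowness} is a smooth bijection of $[0,\xmax]$ onto $[0,\txmax]$ with $d\tx/dx = 1/v(x)>0$. The plan is four steps: (i) transform the differential operator $A$ and check it coincides with $\tA$ on transformed functions; (ii) transform the boundary conditions; (iii) transform the initial data, which is the only subtle point because of the delta function and the operator function $\tq(A)^{1/2}$; (iv) verify the stated self-adjointness and positivity of $\tA$ with respect to $\ootv{\cdot}{\cdot}$.

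For step (i) I would apply the chain rule: since $\tx_x = 1/v$, we have $u_x = \tu_{\tx}/v$ and hence
\[
v^2 u_{xx} = \tu_{\tx\tx} - \frac{\tv_{\tx}}{\tv}\tu_{\tx},
\]
using $v_x = \tv_{\tx}/\tv$. A direct expansion of $\tA\tu = -\tv\,\partial_{\tx}(\tu_{\tx}/\tv)$ gives the same right-hand side up to sign, so $Au(x,t) = \tA\tu(\tx,t)$ pointwise; the equation $\tA\tu + \tu_{tt}=0$ follows. For step (ii), $u_x|_{x=0}=0$ becomes $\tu_{\tx}/v|_{\tx=0}=0$, equivalent to $\tu_{\tx}|_{\tx=0}=0$, and $u|_{x=\xmax}=0$ becomes $\tu|_{\tx=\txmax}=0$ by the definition $\txmax = \tx(\xmax)$.

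Step (iii) is the main obstacle. The key observation is that the change of variables $u\mapsto\tu$ is an isometric isomorphism between $(L^2[0,\xmax],\llangle\cdot,\cdot\rrangle)$ and $(L^2[0,\txmax],\ootv{\cdot}{\cdot})$: substituting $dx = \tv\,d\tx$ gives
\[
\llangle u,w\rrangle = \int_0^{\xmax}\!uw\,\frac{dx}{v^2} = \int_0^{\txmax}\!\tu\tw\,\frac{d\tx}{\tv} = \ootv{\tu}{\tw}.
\]
Since $Au$ transforms to $\tA\tu$ pointwise (step (i)) and the BCs match (step (ii)), this isomorphism intertwines $A$ and $\tA$, hence also intertwines any Borel function of $A$ with the same function of $\tA$; in particular $\tq(A)^{1/2}$ maps to $\tq(\tA)^{1/2}$. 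For the delta function, the standard pullback formula gives $\delta(x+0) = (1/v(0))\,\delta(\tx+0)$ since $d\tx/dx|_{x=0} = 1/v(0)$. Combining, the initial condition $b(x) = v(0)\tq(A)^{1/2}\delta(x+0)$ transforms to
\[
\tb(\tx) = v(0)\,\tq(\tA)^{1/2}\!\left[\tfrac{1}{v(0)}\delta(\tx+0)\right] = \tq(\tA)^{1/2}\delta(\tx+0),
\]
exactly as stated; the prefactor $v(0)$ in the definition of $b$ was inserted precisely to absorb the Jacobian.

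Finally, for step (iv) I would integrate by parts: for $\tu,\tw$ satisfying the homogeneous Neumann--Dirichlet BCs,
\[
\ootv{\tA\tu}{\tw} = -\int_0^{\txmax}\!\partial_{\tx}\!\left(\tfrac{\tu_{\tx}}{\tv}\right)\tw\,d\tx = \int_0^{\txmax}\!\frac{\tu_{\tx}\tw_{\tx}}{\tv}\,d\tx,
\]
where the boundary terms vanish because $\tu_{\tx}(0)=0$ and $\tw(\txmax)=0$. Symmetry in $\tu,\tw$ yields self-adjointness, and taking $\tw=\tu$ gives $\ootv{\tA\tu}{\tu}\ge 0$ with equality only when $\tu_{\tx}\equiv 0$, which together with $\tu(\txmax)=0$ forces $\tu\equiv 0$; this gives positive definiteness and completes the proof.
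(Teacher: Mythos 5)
Your proof is correct and follows essentially the same route as the paper's: a chain-rule computation for the operator and boundary conditions, the isometry $\llangle u,w\rrangle = \ootv{\tu}{\tw}$ to handle the transformation of $\tq(A)^{1/2}\delta(x+0)$ (your Jacobian pullback of the delta gives the same $\frac{1}{v(0)}\delta(\tx+0)$ the paper obtains by matching weighted pairings), and the self-adjointness/positivity of $\tA$, which you establish by direct integration by parts while the paper simply inherits it from $A$ through the isometry. No gaps.
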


We now define a dual variable, $\tw$, that will be useful in the
remainder of the paper.
\begin{definition}\label{def:w}
    We define the \emph{dual variable}, denoted by $\tw$, as the
    solution of the following Cauchy problem on $[0,\txmax]\times
    [0,\infty[$:
    \begin{equation}\label{eqn:cauchyw}
        \tC\tw + \tw_{tt} = 0, \qquad
        \tw|_{\tx = 0} = 0, \ \tw_\tx|_{\tx = \txmax} = 0, \quad
        \tw|_{t = 0} = 0, \ \tw_t|_{t=0} = \frac{1}{\tv}
	\frac{\partial\ti{b}}{\partial\tx},
    \end{equation}
    where
    \[
        \tC \tw \equiv -\frac{1}{\tv}\frac{\partial}{\partial \tx}
	    \left(\tv\frac{\partial\tw}{\partial \tx}\right)
    \]
    with the Dirichlet--Neumann boundary conditions in
    \eqref{eqn:cauchyw}. 
    \footnote{In physical coordinates, the operator
    $C$ is given by $Cw = -\frac{d}{dx}\left(v^2\frac{dw}{dx}\right)$
    with the boundary conditions $w|_{x=0} = 0$ and $w_x|_{x=\xmax} =
    0$.}
    The operator $\tC$ is self adjoint and
    positive definite with respect to the inner product
    $\iptv{\cdot}{\cdot}$, where
    \[
        \iptv{\tu}{\tw} \equiv \int_0^{\txmax} 
            \tu(\tx)\tw(\tx)\tv(\tx)\di{\tx}.
    \]
\end{definition}
The Cauchy problems \eqref{eqn:cauchyslow} and \eqref{eqn:cauchyw} can
be rewritten in first-order form as in the following lemma.
\begin{lemma}\label{lem:1stc}
    Suppose $\tu$ and $\tw$ are the solutions to the following Cauchy
    problem on $[0,\txmax]\times [0,\infty[$:
    \begin{equation}\label{eqn:1stc} 
        \tw_\tx= \frac{1}{\tv} \tu_t, \ \tu_\tx= \tv\tw_t, \qquad 
        \tu|_{\tx = \txmax} = 0, \ \tw|_{\tx=0}=0, \quad
	\tu|_{t=0}=\ti{b}, \ 
	\tw|_{t=0}=0.
    \end{equation}
    Then $\tu$ solves \eqref{eqn:cauchyslow} and $\tw$ solves
    \eqref{eqn:cauchyw}.
\end{lemma}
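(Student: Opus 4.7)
The plan is to show that each first-order relation in \eqref{eqn:1stc}, together with the side conditions there, is equivalent to the corresponding second-order Cauchy problem by cross-differentiating the system, and to recover the missing Neumann and time-derivative data by evaluating the first-order identities on the appropriate boundary and initial slices.

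I would first derive the equation for $\tu$: divide $\tu_\tx = \tv\tw_t$ by $\tv$ and differentiate in $\tx$ to get $\partial_\tx\!\bigl(\tfrac{1}{\tv}\tu_\tx\bigr) = \tw_{\tx t}$; then, assuming enough regularity to commute mixed partials, substitute the first relation $\tw_\tx = \tfrac{1}{\tv}\tu_t$ on the right to obtain $\partial_\tx\!\bigl(\tfrac{1}{\tv}\tu_\tx\bigr) = \tfrac{1}{\tv}\tu_{tt}$. Multiplying by $-\tv$ gives exactly $\tA\tu + \tu_{tt}=0$. The equation for $\tw$ follows symmetrically: multiply $\tw_\tx = \tfrac{1}{\tv}\tu_t$ by $\tv$, differentiate in $\tx$, substitute $\tu_\tx = \tv\tw_t$ on the right, and divide by $-\tv$ to recover $\tC\tw + \tw_{tt}=0$.

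Next I would verify the auxiliary conditions. The Dirichlet conditions $\tu|_{\tx=\txmax}=0$ and $\tw|_{\tx=0}=0$ are imposed directly in \eqref{eqn:1stc}. The Neumann conditions come for free from the first-order relations themselves: evaluating $\tu_\tx = \tv\tw_t$ at $\tx=0$ gives $\tu_\tx|_{\tx=0} = \tv(0)\,\tw_t|_{\tx=0} = 0$ because $\tw$ vanishes identically in $t$ there, and evaluating $\tw_\tx = \tfrac{1}{\tv}\tu_t$ at $\tx=\txmax$ gives $\tw_\tx|_{\tx=\txmax} = 0$ for the analogous reason. The initial data are handled by the same bootstrap: $\tu|_{t=0}=\ti{b}$ and $\tw|_{t=0}=0$ are given, and evaluating the first-order identities at $t=0$ yields $\tu_t|_{t=0} = \tv\,\tw_\tx|_{t=0}=0$ (since $\tw\equiv 0$ at $t=0$) together with $\tw_t|_{t=0} = \tfrac{1}{\tv}\tu_\tx|_{t=0} = \tfrac{1}{\tv}\partial_\tx\ti{b}$, matching \eqref{eqn:cauchyslow} and \eqref{eqn:cauchyw}.

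The main obstacle is not conceptual but a regularity caveat: the cross-differentiation tacitly uses equality of mixed partials, and the boundary and initial evaluations use traces of first derivatives of $\tu$ and $\tw$. Since $\ti{b} = \tq(\tA)^{1/2}\delta(\tx+0)$ is a smoothed source and $\tv$ is regular by assumption, solutions of \eqref{eqn:1stc} should be smooth enough to justify these steps; otherwise one would first read the manipulations distributionally and then specialize once the appropriate smoothness is in place.
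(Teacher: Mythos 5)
Your proposal is correct and follows essentially the same route as the paper's own proof: cross-differentiating the first-order system (using equality of mixed partials) to recover the two second-order equations, and then differentiating the given boundary/initial identities along the slices where $\tu$ or $\tw$ vanishes identically to recover the Neumann conditions and the data $\tu_t|_{t=0}=0$, $\tw_t|_{t=0}=\tfrac{1}{\tv}\partial_\tx\ti{b}$. The only differences are cosmetic — you carry out the dual $\tw$ case explicitly where the paper says ``similar,'' and you add a sensible regularity caveat the paper leaves implicit.
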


The next definition is an extension of Definition~\ref{def:snapshots}.
\begin{definition}\label{def:snapshots_pd}
    Let $\tu$ and $\tw$ be the solutions to \eqref{eqn:1stc} (so $\tu$
    is the solution to \eqref{eqn:cauchyslow} and $\tw$ is the solution
    to \eqref{eqn:cauchyw}).  Then, for $k = 0,\ldots,2n-1$, the
    \emph{primary snapshots} are $\tu_k \equiv \tu(\tx,k\tau)$, and the
    \emph{dual snapshots} are $\tw_k \equiv \tw(\tx,(k+1/2)\tau)$.
\end{definition}

Note that the primary snapshots, $\tu_k$, are simply the snapshots from
Definition~\ref{def:snapshots}, namely $u_k$, transformed into
slowness coordinates; i.e., $\tu_k(\tx) = u_k(x(\tx))$.  

In the next lemma, we give expressions and finite-difference recursions
for the primary and dual snapshots.  
\begin{lemma}\label{lem:snapshots_uw}
    Suppose $\tu$, $\tw$ are the solutions to \eqref{eqn:1stc}.  Then,
    for $k = 0,\ldots,2n-1$, the primary snapshots are given by 
    \begin{equation*}
        \tu_k(\tx) = T_k\left(\ti{P}\right)\tu_0(\tx), 
    \end{equation*}
    where $\ti{P} \equiv \cos\left(\tau\sqrt{\tA}\right)$ and
    $\tu_0(\tx) = \ti{b}(\tx)= \tq\left(\tA\right)^{1/2}\delta(\tx+0)$.
    This implies the primary snapshots satisfy the recursion 
    \begin{equation}\label{eqn:primary_recursion}
        \frac{\tu_{k+1} - 2\tu_k +
	\tu_{k-1}}{\tau^2} = \xi\left(\ti{P}\right)\tu_k 
	\eqfor k = 0,\ldots,2n-2, \qquad \tu_0 =
	\ti{b}, \ \tu_{1} = \tu_{-1},
    \end{equation}
    where $\xi$ is defined in \eqref{eqn:xi}.

    Similarly, for $k = 0,\ldots,2n-1$, the dual snapshots are given by 
    \begin{equation}\label{eqn:dual_formula}
        \tw_k(\tx) =
            \left[\Ts_k\left(\ti{P}_C\right) +
	        \Ts_{k-1}\left(\ti{P}_C\right)\right]\tw_0,
    \end{equation}
    where $\ti{P}_C \equiv \cos\left(\tau\sqrt{\tC}\right)$, $\Ts_k$ is
    the $k\textsuperscript{th}$ Chebyshev polynomial of the second kind
    (with $\Ts_{-1} = 0$ and $\Ts_{-2} = -1$),
    and $\tw_0 = \tw(\tx,0.5\tau).$ 
    This implies the dual snapshots satisfy the recursion
    \begin{equation}\label{eqn:dual_recursion}
        \begin{aligned}
	    &\frac{\tw_{k+1} - 2\tw_k +
	    \tw_{k-1}}{\tau^2} = \xi\left(\ti{P}_C\right)\tw_k 
	    \eqfor k = 0,\ldots,2n-2, \\
	    &\tw_0 + \tw_{-1} = 0, \ 
	    \tw_{0} =\sin\left(0.5\tau\sqrt{\tC}\right)\tC^{-1/2}
	        \frac{1}{\tv}
		\frac{\partial\ti{b}}{\partial\tx}.
	\end{aligned}
    \end{equation}
\end{lemma}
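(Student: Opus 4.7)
The plan is to treat the primary and dual cases in parallel, since both reduce, via the functional calculus for the self-adjoint operators $\tA$ and $\tC$, to representing a propagator of the form $\cos((k+\alpha)\tau\sqrt{\cdot})$ or $\sin((k+\alpha)\tau\sqrt{\cdot})$ as a polynomial in $\cos(\tau\sqrt{\cdot})$.

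For the primary snapshots, Lemma~\ref{lem:trans_to_slow} identifies $\tu$ as the solution of the self-adjoint Cauchy problem \eqref{eqn:cauchyslow} with zero initial velocity, so the spectral theorem gives $\tu(\tx,t) = \cos(t\sqrt{\tA})\ti{b}(\tx)$. Setting $t = k\tau$ and invoking the identity $\cos(k\theta) = T_k(\cos\theta)$, already used to derive \eqref{eqn:snapshot}, yields $\tu_k = T_k(\ti{P})\tu_0$. The recursion \eqref{eqn:primary_recursion} then follows from the standard three-term recurrence $T_{k+1}(x) - 2xT_k(x) + T_{k-1}(x) = 0$, rearranged using the definition of $\xi$ in \eqref{eqn:xi}; the initial conditions $\tu_0 = \ti{b}$ and $\tu_{-1} = \tu_1$ come from $T_0 = 1$ and $T_{-1} = T_1$.

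For the dual snapshots, Definition~\ref{def:w} gives a Cauchy problem with zero initial displacement and prescribed initial velocity, so by functional calculus $\tw(\tx,t) = \sin(t\sqrt{\tC})\,\tC^{-1/2}\,\tfrac{1}{\tv}\tfrac{\partial\ti{b}}{\partial\tx}$. Setting $\theta := \tau\sqrt{\tC}$, the task is to express $\sin((k+\tfrac{1}{2})\theta)$ as a polynomial in $\cos\theta$ times $\sin(\theta/2)$. The key identity I would establish is
\[
    \sin\bigl((k+\tfrac{1}{2})\theta\bigr)
    = \bigl[\Ts_k(\cos\theta) + \Ts_{k-1}(\cos\theta)\bigr]\sin(\theta/2),
\]
which is proved from $\Ts_k(\cos\alpha)\sin\alpha = \sin((k+1)\alpha)$ combined with $\sin\theta = 2\sin(\theta/2)\cos(\theta/2)$ and the sum-to-product formula $\sin((k+1)\theta)+\sin(k\theta) = 2\sin((k+\tfrac{1}{2})\theta)\cos(\theta/2)$. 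Substituting this identity with $\theta$ replaced by the operator $\tau\sqrt{\tC}$ and recognizing the factor $\sin(0.5\tau\sqrt{\tC})\tC^{-1/2}\tfrac{1}{\tv}\tfrac{\partial\ti{b}}{\partial\tx}$ as $\tw_0$ produces the closed form \eqref{eqn:dual_formula}.

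The recursion \eqref{eqn:dual_recursion} then drops out of the three-term recurrence $\Ts_{k+1}(x) - 2x\Ts_k(x) + \Ts_{k-1}(x) = 0$ for second-kind Chebyshev polynomials: adding the recurrences at indices $k$ and $k-1$ gives $(\Ts_{k+1}+\Ts_k) - 2x(\Ts_k+\Ts_{k-1}) + (\Ts_{k-1}+\Ts_{k-2}) = 0$, which, evaluated at $x = \ti{P}_C$ and applied to $\tw_0$, becomes $\tw_{k+1} - 2\ti{P}_C\tw_k + \tw_{k-1} = 0$; rearranging using \eqref{eqn:xi} yields the stated form. The initial relation $\tw_0 + \tw_{-1} = 0$ follows from $\Ts_0 = 1$, $\Ts_{-1} = 0$, and $\Ts_{-2} = -1$. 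The main technical obstacle is the half-angle Chebyshev identity of the dual case; once it is in hand, everything else reduces to bookkeeping with the Chebyshev three-term recurrences.
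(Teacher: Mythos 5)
Your proposal is correct and follows the same overall skeleton as the paper's proof: spectral representation of $\tu$ and $\tw$ via the functional calculus, reduction of \eqref{eqn:dual_formula} to the scalar identity $\left[\Ts_k(\cos\theta)+\Ts_{k-1}(\cos\theta)\right]\sin(\theta/2)=\sin((k+\tfrac12)\theta)$, and then the three-term Chebyshev recurrences for \eqref{eqn:primary_recursion} and \eqref{eqn:dual_recursion}. The one genuine difference is how that central identity is verified. The paper expands $\Ts_k$ and $\Ts_{k-1}$ into sums of first-kind Chebyshev polynomials via \eqref{eqn:Chebyshev_identities} and then runs an induction on $k$ using angle-addition formulas; you instead invoke the defining relation $\Ts_k(\cos\alpha)\sin\alpha=\sin((k+1)\alpha)$ together with $\sin\theta=2\sin(\theta/2)\cos(\theta/2)$ and the sum-to-product formula, which gives the identity in two lines after cancelling the common factor $\cos(\theta/2)$ (legitimate by continuity, since both sides are continuous and agree off the zero set of $\cos(\theta/2)$). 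Your route is shorter and avoids the induction entirely; the paper's route has the minor advantage of staying within the polynomial identities it has already recorded for later use in the proof of Lemma~\ref{lem:data_lemma}. Your derivation of \eqref{eqn:dual_recursion} by adding the second-kind recurrences at consecutive indices is the same computation the paper carries out, just organized more cleanly.
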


In the following lemma, we rewrite the recursions from
Lemma~\ref{lem:snapshots_uw} in first-order form.
\begin{lemma}\label{lem:leapfrog}
    The second-order time-stepping schemes \eqref{eqn:primary_recursion}
    and \eqref{eqn:dual_recursion} can be equivalently rewritten as the
    first-order ``leapfrog'' discretization of \eqref{eqn:1stc}.  In
    particular, 
    \begin{equation}\label{eqn:tstepping1st} 
        \begin{cases}
	    \dfrac{\tw_{k}-\tw_{k-1}}{\tau} = \dfrac{1}{\tv}\Lt \tu_k
	    &\text{for } k = 0,\ldots,2n-1, \myspace
	    \dfrac{\tu_{k+1}-\tu_{k}}{\tau} = -\tv\Lt^T\tw_k
	        &\text{for } k = 0,\ldots,2n-2, \myspace
		\tu_0=\ti{b}, \ 
	        \tw_{0}+\tw_{-1} = 0;
	\end{cases}
    \end{equation}
    here $\Lt^T$ is the adjoint of $\Lt$ with respect to the standard
    inner product on $L_2[0,\txmax]$, 
    \begin{equation*}
        \Lt = \frac{2}{\tau}\cdot\frac{\partial}{\partial\tx} \tA^{-1/2}
	    \sin\left(0.5\tau\sqrt{\tA}\right),
	\eqand{and} 
	\Lt^T =	-\frac{2}{\tau}\cdot\frac{1}{\tv} 
	    \sin\left(0.5\tau\sqrt{\tA}\right)\tA^{-1/2}\tv
	    \frac{\partial}{\partial\tx}.
    \end{equation*}
\end{lemma}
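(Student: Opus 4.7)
The plan is to verify equivalence by starting from the leapfrog system \eqref{eqn:tstepping1st} and eliminating one variable at a time to recover each second-order recursion. Subtracting the second leapfrog equation at index $k-1$ from the one at $k$ and substituting the first yields
\begin{equation*}
    \tu_{k+1} - 2\tu_k + \tu_{k-1} = -\tau^2 \left(\tv\Lt^T \tfrac{1}{\tv}\Lt\right) \tu_k,
\end{equation*}
and an analogous manipulation produces $\tw_{k+1} - 2\tw_k + \tw_{k-1} = -\tau^2 \bigl(\tfrac{1}{\tv}\Lt\, \tv \Lt^T\bigr) \tw_k$. The lemma therefore reduces to the two operator identities
\begin{equation*}
    \tv \Lt^T \tfrac{1}{\tv}\Lt = \tfrac{2}{\tau^2}\bigl(I - \cos\bigl(\tau\sqrt{\tA}\bigr)\bigr),
    \qquad
    \tfrac{1}{\tv}\Lt\, \tv \Lt^T = \tfrac{2}{\tau^2}\bigl(I - \cos\bigl(\tau\sqrt{\tC}\bigr)\bigr),
\end{equation*}
which by \eqref{eqn:xi} equal $-\xi(\ti{P})$ and $-\xi(\ti{P}_C)$, matching the right-hand sides of \eqref{eqn:primary_recursion} and \eqref{eqn:dual_recursion}.

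The first identity is pure algebra with functions of $\tA$: after substituting the explicit formulas for $\Lt$ and $\Lt^T$ and cancelling the outer factors of $\tv$ and $1/\tv$, the remaining middle block $\tv (\partial/\partial\tx)(1/\tv)(\partial/\partial\tx)$ is exactly $-\tA$, so the product collapses to $(4/\tau^2)\sin^2(0.5\tau\sqrt{\tA})$, which equals $(2/\tau^2)(I - \cos(\tau\sqrt{\tA}))$ by the half-angle formula. The second identity is more subtle: the analogous cancellation leaves $-(2/\tau^2)(1/\tv)(\partial/\partial\tx)\tA^{-1}\bigl(I - \cos(\tau\sqrt{\tA})\bigr)\tv(\partial/\partial\tx)$, in which the factors of $\tv$ now sit on the wrong sides of the derivatives to produce a single power of $\tA$ or $\tC$ in the middle. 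To bridge this gap I will prove the intertwining relation
\begin{equation*}
    \frac{1}{\tv}\frac{\partial}{\partial\tx}\, f(\tA) \;=\; f(\tC)\,\frac{1}{\tv}\frac{\partial}{\partial\tx}
\end{equation*}
for any continuous $f$. This is verified on eigenfunctions: if $\tA z_l = \lambda_l z_l$ under the Neumann--Dirichlet conditions of \eqref{eqn:cauchyslow}, direct computation using $\tA = -\tv(\partial/\partial\tx)(1/\tv)(\partial/\partial\tx)$ shows that $y_l \equiv (1/\tv) z_l'$ satisfies $\tC y_l = \lambda_l y_l$, and the transformed boundary conditions are exactly the Dirichlet--Neumann ones of $\tC$: $y_l(0) = 0$ follows from $z_l'(0) = 0$, while $y_l'(\txmax) = \partial_\tx(z_l'/\tv)|_{\txmax} = -\lambda_l z_l(\txmax)/\tv(\txmax) = 0$ by the eigenvalue equation. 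Applying the intertwining with $f(\lambda) = (1 - \cos(\tau\sqrt{\lambda}))/\lambda$, together with the elementary identity $(1/\tv)(\partial/\partial\tx) \tv(\partial/\partial\tx) = -\tC$, then yields the second identity.

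Finally, the initial conditions must be matched. The condition $\tu_0 = \ti{b}$ appears in both formulations; $\tu_1 = \tu_{-1}$ (needed in \eqref{eqn:primary_recursion}) is obtained by extending the second leapfrog equation to $k = -1$ to define $\tu_{-1}$ and using $\tw_{-1} = -\tw_0$. Writing the first leapfrog equation at $k = 0$ with $\tw_{-1} = -\tw_0$ gives $\tw_0 = (\tau/(2\tv))\Lt \ti{b}$, and the intertwining relation applied with $f(\lambda) = \lambda^{-1/2}\sin(0.5\tau\sqrt{\lambda})$ converts this to $\tw_0 = \sin(0.5\tau\sqrt{\tC})\,\tC^{-1/2}(1/\tv)(\partial\ti{b}/\partial\tx)$, matching \eqref{eqn:dual_recursion}. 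The main obstacle is establishing the intertwining: checking carefully that $z \mapsto (1/\tv) z'$ sends Neumann--Dirichlet eigenfunctions of $\tA$ to Dirichlet--Neumann eigenfunctions of $\tC$ at the same eigenvalue, and extending this to general $f$ by spectral decomposition; once this is in hand the rest of the argument is routine operator-function arithmetic.
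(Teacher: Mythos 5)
Your argument is correct, and it reaches the same algebraic skeleton as the paper's proof --- eliminate one variable from the leapfrog system, collapse $-\tv\Lt^T\frac{1}{\tv}\Lt$ and $-\frac{1}{\tv}\Lt\tv\Lt^T$ into $\xi\bigl(\ti{P}\bigr)$ and $\xi\bigl(\ti{P}_C\bigr)$, and match the initial conditions --- but it executes that skeleton in a genuinely different setting. The paper deliberately retreats to a finite-difference discretization on the staggered grid: it writes $\btA = \ha{\la{R}}\ti{\la{S}}$ and $\bhC = \ti{\la{S}}\ha{\la{R}}$, so that $\btA$ and $\bhC$ are literally similar matrices, and then obtains the analogue of your intertwining relation (its identities \eqref{eqn:AtoC}) by a two-line power-series computation, sidestepping all questions about operator domains. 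You instead stay at the continuum level and prove the intertwining $\frac{1}{\tv}\partial_\tx f(\tA) = f(\tC)\frac{1}{\tv}\partial_\tx$ directly, by checking that $z_l \mapsto \frac{1}{\tv}z_l'$ carries Neumann--Dirichlet eigenfunctions of $\tA$ to Dirichlet--Neumann eigenfunctions of $\tC$ at the same eigenvalue; your boundary-condition verification at $\tx=\txmax$ (using the eigenvalue equation itself to get $\partial_\tx(z_l'/\tv)|_{\txmax} = -\lambda_l z_l(\txmax)/\tv(\txmax) = 0$) is exactly the point the paper avoids by discretizing. What your route buys is a proof that lives where the lemma is actually stated, with the structural reason for the $\tA\leftrightarrow\tC$ duality made explicit; what the paper's route buys is immunity from domain technicalities plus, as a by-product, the exact discretization used in the numerics. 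Your observation that the primary identity is "pure algebra" (the middle block is $-\tA$ outright) while the dual identity genuinely requires the intertwining is accurate and is slightly cleaner than the paper, which routes both through the discrete similarity.

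One small omission: the lemma also asserts that $\Lt^T$ as given is the adjoint of $\Lt$ with respect to the standard inner product on $L_2[0,\txmax]$, and the paper verifies this (discretely) at the end of its proof. Your proposal never checks it. It is routine --- $\partial_\tx^* = -\partial_\tx$ modulo the boundary terms killed by the Neumann--Dirichlet conditions, and $\tA^{-1/2}\sin\bigl(0.5\tau\sqrt{\tA}\bigr)$ is self adjoint in $\ootv{\cdot}{\cdot}$, hence its unweighted $L_2$ adjoint is $\frac{1}{\tv}\tA^{-1/2}\sin\bigl(0.5\tau\sqrt{\tA}\bigr)\tv$ --- but it should be stated, since the adjointness is what later makes $\cL$ anti-self-adjoint in Definition~\ref{def:L_dtau}.
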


In particular, Lemma~\ref{lem:leapfrog} implies the operators
$\xi\left(\ti{P}\right)$ and $\xi\left(\ti{P}_C\right)$ may be factored
as
\begin{equation}\label{eqn:xi_factored}
    \xi\left(\ti{P}\right) = -\tv\Lt^T\frac{1}{\tv}\Lt
    \eqand{and}
    \xi\left(\ti{P}_C\right) = -\frac{1}{\tv}\Lt\tv\Lt^T.
\end{equation}

The upshot of this section is that the snapshots in
Definition~\ref{def:snapshots_pd} may be generated via finite-difference
schemes --- the second-order finite-difference schemes are given in
Lemma~\ref{lem:snapshots_uw} while the equivalent first-order
finite-difference scheme is given in Lemma~\ref{lem:leapfrog}.  This
theme permeates the remainder of this section --- as we will see, all of
our first-order algorithms and recursions have second-order
equivalents.  


\subsection{Orthogonalization of the snapshots}
\label{subsubsec:orthogonalization}

It turns out the orthogonalized snapshots are localized (we will justify
this in later sections), so they are useful as a basis for an inversion
method.  In particular, the (squared) norm of each orthogonalized
snapshot contains information about the magnitude of the velocity near
the point about which that orthogonalized snapshot is localized
(specifically, the center of mass of the corresponding squared
orthogonalized snapshot).  We discuss our inversion algorithm in more
detail in \S~\ref{sec:inversion}; for now, we focus on orthogonalizing
the snapshots.

Lemma~\ref{lem:snapshots_uw} implies the first $n$ primary
and dual snapshots span the Krylov subspaces
\begin{equation*}
    \ti{\mathcal{K}}^{u}_n\left(\tu_0,\ti{P}\right) \equiv 
        \mathrm{span} \left\{\tu_0,
	\ti{P}\tu_0,\ldots,\ti{P}^{n-1}\tu_0\right\}
\end{equation*}
and
\begin{equation*}
    \ti{\mathcal{K}}^{w}_n\left(\tw_0,\ti{P}_C\right) \equiv 
        \mathrm{span}\left\{\tw_0,
        \ti{P}_C\tw_0,\ldots,\ti{P}_C^{n-1}\tw_0\right\},
\end{equation*}   
respectively.  The classical method for constructing an orthonormal
basis of a Krylov subspace is the Lanczos algorithm
\cite{Parlett:1998:SEP}, and the algorithm we use is a first-order
equivalent of the Lanczos algorithm.  We begin by defining some useful
operators.  
\begin{definition}\label{def:L_dtau}
    We define the operator $\cL$ by 
    \begin{equation*}
        \cL \equiv \cLout.
    \end{equation*}
    Then the time-stepping scheme \eqref{eqn:tstepping1st} can be
    written as
    \begin{equation}\label{eqn:leapfrog_matrix}
        \cL\myvector{\tu_k}{\tw_k} = \dtau \myvector{\tu_k}{\tw_k}
	\eqfor k = 0,\ldots,2n-1,
    \end{equation}
    where
    \begin{equation}\label{eqn:dtau}
        \dtau\myvector{\tu_k}{\tw_k} \equiv \frac{1}{\tau}
        \myvector{\tu_{k+1}-\tu_k}{\tw_k-\tw_{k-1}}.
    \end{equation}
    (Technically speaking, $\tu_{2n}$ is not defined --- we may define
    it through \eqref{eqn:leapfrog_matrix} for completeness.)
    We define the inner product $\ootviptv{\cdot}{\cdot}$ by
    \begin{equation*}
        \ootviptv{\myvector{\tu^a}{\tw^a}}{\myvector{\tu^b}{\tw^b}}
	\equiv
	\ootv{\tu^a}{\tu^b} + \iptv{\tw^a}{\tw^b}.
    \end{equation*}
\end{definition}
The operator $\cL$ is anti-self-adjoint with respect to the inner
product $\ootviptv{\cdot}{\cdot}$, i.e., 
\[
    \ootviptv{\cL\myvector{\tu^a}{\tw^a}}{\myvector{\tu^b}{\tw^b}}
    = 
    -\ootviptv{\myvector{\tu^a}{\tw^a}}{\cL\myvector{\tu^b}{\tw^b}}.
\]
Next, we project the operator $\cL$ onto the Krylov subspaces spanned by
the snapshots, namely $\ti{\mathcal{K}}_n^u\left(\tu_0, \ti{P}\right)$
and $\ti{\mathcal{K}}_n^w\left(\tw_0, \ti{P}_C\right)$.  Before
presenting the algorithm, we introduce some notation.

We denote the orthogonalized primary and dual snapshots by $\ou_j$ and
$\ow_j$, respectively, for $j = 1,\ldots,n$.  (Note that we have shifted
the index by $1$ --- the snapshots $\tu_k$ and $\tw_k$ are indexed from
$k = 0$ to $k = n-1$.)  We store the orthogonalized snapshots in
``vectors'' of the form 
\begin{equation}\label{eqn:oU}
    \oU_{2j-1} = \myvector{\ou_j}{0} 
    \eqand{and}
    \oU_{2j}   = \myvector{0}{\ow_j} \eqfor j = 1,\ldots,n,
\end{equation}
or, even more compactly, in a ``matrix''
\begin{equation}\label{eqn:oQ}
    \cQ \equiv
    \begin{bmatrix}
        \oU_1, \ldots, \oU_{2n}
    \end{bmatrix}
    = 
    \begin{bmatrix}
    \ou_1 &    0  & \ou_2 &    0  & \ldots & \ou_n &    0 \\
       0  & \ow_1 &    0  & \ow_2 & \ldots &    0  & \ow_n 
    \end{bmatrix}.
\end{equation}
The Lanczos algorithm constructs a tridiagonal matrix $\bcT \in
\mathbb{R}^{2n\times 2n}$ such that 
\begin{equation}\label{eqn:crux_alg_2}
    \cL\cQ = \cQ\bcT + \frac{1}{\g_n}\oU_{2n+1}\la{e}_{2n}^T,
\end{equation}
where $\g_n$ is a constant we define later and $\oU_{2n+1}$ appears
because, in general, Lanczos tridiagonalization is run on a family of
snapshots that may be infinite (or with dimension at least
$2n+1$ --- see, e.g., \cite{Parlett:1998:SEP}); $\oU_{2n+1}$ will not be
needed for the remainder of the paper.  Since $\cL$ is anti-self-adjoint
and the columns of $\cQ$ are to be orthogonal, the diagonal components
of $\bcT$ must be $0$.  To obtain the desired orthogonality properties,
we take
\begin{equation}\label{eqn:bcT}
    \bcT = \bcO\la{\Gamma}^{-1},
\end{equation}
where 
\begin{equation}\label{eqn:OGamma}
    \bcO \equiv
    \begin{bmatrix}
        0 & -1     &        &    \\
	1 & 0      & \ddots &    \\
	  & \ddots & \ddots & -1 \\
	  &        & 1      & 0
    \end{bmatrix}
    = -\bcO^T \in \mathbb{R}^{2n\times 2n},
    \quad
    \la{\Gamma} \equiv \diag(\ghat_1, \g_1, \ghat_2, \g_2, \ldots,
        \ghat_n, \g_n),
\end{equation}
and, for $j = 1,\ldots,n$, 
\begin{equation}\label{eqn:gammas}
    \ghat_j \equiv \normootv{\ou_j}^{-2} \equiv 
        \ootv{\ou_j}{\ou_j}^{-1}
    \eqand{and}
    \g_j \equiv \normiptv{\ow_j}^{-2} \equiv
        \iptv{\ow_j}{\ow_j}^{-1}.
\end{equation}
Then \eqref{eqn:oU}--\eqref{eqn:gammas} give the first-order algorithm
for the orthogonalization of the first $n$ primary and dual snapshots,
which is summarized in Algorithm~\ref{alg:algorithm_2}.
\begin{algorithm}[H]
    \caption{Orthogonalization of Snapshots}
	\label{alg:algorithm_2}
	\begin{algorithmic}
	    \Require $\tu(\tx,0) = \ti{b}(\tx)$,
		    $\tv$, $\txmax$, $n$,
		    $\Lt$, and $\Lt^T$
	    \Ensure $\ghat_j$, $\g_j$, and orthogonalized snapshots
	        $\ou_j$, $\ow_j$ for $j = 1,\ldots,n$
	    \State Set $\ow_0 = 0$ and $\ou_1 = \ti{b}$.
	    \For{$j = 1,\ldots,n$}
            \begin{enumerate}
                \item 
		    $\ghat_j = \dfrac{1}{\normootv{\ou_j}^2} = 
	                \dfrac{1}{\ds\int_0^{\txmax} (\ou_j)^2 
			\dfrac{1}{\tv}\di{\tx}}$;
                \item 
		    $\ow_j = \ow_{j-1} +\ghat_j\dfrac{1}{\tv}\Lt\ou_j$;
	        \item 
		    $\g_j = \dfrac{1}{\normiptv{\ow_j}^2} = 
	                \dfrac{1}{\ds\int_0^{\txmax}
			(\ow_j)^2\tv\di{\tx}}$;
                \item 
		    $\ou_{j+1} = \ou_{j} - \g_j\tv\Lt^T\ow_j$.
	    \end{enumerate}    
	    \EndFor
	\end{algorithmic}
\end{algorithm}

We pause to consider a couple of important features of
Algorithm~\ref{alg:algorithm_2}.  First, note that the recursion steps
(steps $2$ and $4$) resemble a finite-difference algorithm that
exactly computes the orthogonalized snapshots, since 
\[
    \frac{\ou_{j+1}-\ou_j}{\g_j} = -\tv\Lt^T\ow_j 
    \eqand{and}
    \frac{\ow_j-\ow_{j-1}}{\ghat_j} = \frac{1}{\tv}\Lt\ou_j.
\]
Second, if $\ou_j$ and $\ow_j$ are localized in some sense (as we
claimed above), then, due to steps $1$ and $3$, $\ghat_j$ and $\g_j$ are
related to localized averages of the velocity (roughly speaking). This
is a key insight for our reconstruction algorithm --- $\ghat_j$ and
$\g_j$ give us estimates of pointwise values of $v$ near where the
squared orthogonalized snapshots are localized, i.e., on the optimal
grid defined by the centers of mass of the squared orthogonalized
snapshots.  Admittedly, this explanation is not complete; we will add
more details in later sections.  Third, in
Algorithm~\ref{alg:algorithm_2} we assume $v$ (hence $\tv$) is known; in
\S~\ref{subsec:Galerkin}, we compute $\ghat_j$, $\g_j$ from the measured
data without any \emph{a priori} knowledge of $v$.  Finally, the
following proposition summarizes the important properties of
Algorithm~\ref{alg:algorithm_2}.
\begin{proposition}\label{prop:alg_2}
    Suppose $\ou_j$, $\ow_j$ ($j = 1,\ldots,n$) are obtained via
    Algorithm~\ref{alg:algorithm_2}.  Then $\ootv{\ou_i}{\ou_j} =
    \ghat_j^{-1}\delta_{ij}$ and $\iptv{\ow_i}{\ow_j} =
    \g_j^{-1}\delta_{ij}$ for $i$, $j = 1,\ldots,n$.  Moreover, 
    \[
        \mathrm{span}\left\{\ou_1,\ldots,\ou_n\right\} =
            \ti{\mathcal{K}}_n^u\left(\tu_0,\ti{P}\right)
	\eqand{and}
        \mathrm{span}\left\{\ow_1,\ldots,\ow_n\right\} =
            \ti{\mathcal{K}}_n^w\left(\tw_0,\ti{P}_C\right).
    \]
\end{proposition}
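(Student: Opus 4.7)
The plan is to recognize Algorithm~\ref{alg:algorithm_2} as a three-term Lanczos recurrence for the anti-self-adjoint operator $\cL$ with respect to the inner product $\ootviptv{\cdot}{\cdot}$, starting from $\oU_1 = \myvector{\tu_0}{0}$, and then to deduce both the orthogonality and the Krylov span identities by a standard induction. First I would combine the outputs of Algorithm~\ref{alg:algorithm_2} into the single sequence $\oU_1,\ldots,\oU_{2n}$ from \eqref{eqn:oU}. Substituting steps 2 and 4 of the algorithm into the definition of $\cL$ in Definition~\ref{def:L_dtau} yields
\[
    \cL \oU_{2j-1} = \ghat_j^{-1}(\oU_{2j} - \oU_{2j-2})
    \eqand{and}
    \cL \oU_{2j} = \g_j^{-1}(\oU_{2j+1} - \oU_{2j-1}),
\]
which is exactly the columnwise statement $\cL\cQ = \cQ\bcT + \g_n^{-1}\oU_{2n+1}\la{e}_{2n}^T$. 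The self-inner-product identities $\ootv{\ou_j}{\ou_j} = \ghat_j^{-1}$ and $\iptv{\ow_j}{\ow_j} = \g_j^{-1}$ are tautological from steps 1 and 3, so if I set $\alpha_k \equiv \ootviptv{\oU_k}{\oU_k}^{-1}$ then $\alpha_{2j-1} = \ghat_j$ and $\alpha_{2j} = \g_j$.

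Next I would establish cross-orthogonality by induction on $k$: assume $\ootviptv{\oU_i}{\oU_k} = 0$ for all $i < k$ and verify the same for $k+1$ using $\oU_{k+1} = \oU_{k-1} + \alpha_k \cL \oU_k$. For $i \le k-2$, anti-self-adjointness of $\cL$ (noted right after Definition~\ref{def:L_dtau}) together with the three-term recursion applied to $\cL\oU_i$ gives
\[
    \ootviptv{\cL\oU_k}{\oU_i} = -\ootviptv{\oU_k}{\cL\oU_i} = -\alpha_i^{-1}\ootviptv{\oU_k}{\oU_{i+1} - \oU_{i-1}} = 0,
\]
since both $i\pm 1 < k$. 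For $i = k-1$, a short calculation using anti-self-adjointness reduces the claim to $\ootviptv{\oU_{k-1}}{\oU_{k-1}} = (\alpha_k/\alpha_{k-1})\ootviptv{\oU_k}{\oU_k}$, which is precisely the Lanczos normalization $\alpha_k = 1/\ootviptv{\oU_k}{\oU_k}$ built into steps 1 and 3. The case $i = k$ is automatic from $\ootviptv{\cL\oU_k}{\oU_k} = -\ootviptv{\oU_k}{\cL\oU_k}$, which forces the quantity to vanish. Because $\oU_{2j-1}$ and $\oU_{2j}$ live in complementary summands of the direct sum, this immediately yields $\ootv{\ou_i}{\ou_j} = \ghat_j^{-1}\delta_{ij}$ and $\iptv{\ow_i}{\ow_j} = \g_j^{-1}\delta_{ij}$.

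For the span identities, once orthogonality (and non-breakdown, which holds under the rank assumption stated before Definition~\ref{def:snapshots_pd}) are in hand, $\mathrm{span}\{\oU_1,\ldots,\oU_{2n}\} = \mathrm{span}\{\oU_1,\cL\oU_1,\ldots,\cL^{2n-1}\oU_1\}$ by standard Krylov theory. A direct induction using the block-anti-diagonal form of $\cL$ and the factorizations \eqref{eqn:xi_factored} gives
\[
    \cL^{2j}\oU_1 = \myvector{(\xi(\ti{P}))^j\tu_0}{0}
    \eqand{and}
    \cL^{2j+1}\oU_1 = \myvector{0}{\tfrac{1}{\tv}\Lt(\xi(\ti{P}))^j\tu_0}.
\]
Since $\xi(\ti{P}) = -\frac{2}{\tau^2}(I - \ti{P})$, polynomials of degree $\le j$ in $\xi(\ti{P})$ are the same as polynomials of degree $\le j$ in $\ti{P}$, so projecting onto the first component yields $\mathrm{span}\{\ou_1,\ldots,\ou_n\} = \ti{\mathcal{K}}_n^u(\tu_0,\ti{P})$. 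On the dual side, the leapfrog scheme \eqref{eqn:tstepping1st} at $k=0$ together with the initial condition $\tw_{-1} = -\tw_0$ gives $\frac{1}{\tv}\Lt\tu_0 = \frac{2}{\tau}\tw_0$, and the intertwining identity $\frac{1}{\tv}\Lt\cdot\xi(\ti{P}) = \xi(\ti{P}_C)\cdot\frac{1}{\tv}\Lt$ (immediate from \eqref{eqn:xi_factored}) yields $\frac{1}{\tv}\Lt(\xi(\ti{P}))^j\tu_0 = \frac{2}{\tau}(\xi(\ti{P}_C))^j\tw_0$, which spans $\ti{\mathcal{K}}_n^w(\tw_0,\ti{P}_C)$ as $j$ ranges over $0,\ldots,n-1$.

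I expect the main obstacle to be the careful bookkeeping in the $i = k-1$ case of the orthogonality induction --- verifying that the normalization choice hidden inside steps 1 and 3 is exactly what cancels the nearest-neighbor inner product. The remaining pieces are routine induction and direct manipulation using the operator factorizations \eqref{eqn:xi_factored}.
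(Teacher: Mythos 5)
Your proof is correct. The orthogonality argument is at bottom the same mechanism as the paper's --- a telescoping three-term recursion plus the normalization hidden in steps 1 and 3 --- but you package it differently: the paper runs the induction separately on the primary and dual families, pairing $\ou_{j+1} = \ou_j - \g_j\tv\Lt^T\ow_j$ with the adjoint relation $\Lttx{\Lt^T\ow}{\ou} = \Lttx{\ow}{\Lt\ou}$ and the identity $\frac{1}{\tv}\Lt\ou_k = \ghat_k^{-1}(\ow_k-\ow_{k-1})$, whereas you interlace everything into the single sequence $\oU_1,\ldots,\oU_{2n}$ and lean on the anti-self-adjointness of $\cL$, which is exactly the structure of \eqref{eqn:crux_alg_2} that the paper states but does not actually use in its proof of the proposition. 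Where you genuinely diverge is the span identities: the paper obtains them as corollaries of Lemmas~\ref{lem:u_Lanczos} and \ref{lem:w_Lanczos} (the reduction to two separate standard Lanczos processes, proved elsewhere in the appendix), while you compute $\cL^{2j}\oU_1$ and $\cL^{2j+1}\oU_1$ directly from \eqref{eqn:xi_factored} and use the intertwining relation $\frac{1}{\tv}\Lt\,\xi(\ti{P}) = \xi(\ti{P}_C)\,\frac{1}{\tv}\Lt$ together with $\frac{1}{\tv}\Lt\tu_0 = \frac{2}{\tau}\tw_0$. Your route is self-contained and arguably cleaner (one induction instead of two lemmas plus a citation to general Lanczos theory); the paper's route buys the explicit coefficient formulas \eqref{eqn:ab_u}, which it needs later anyway. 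Two small points to tidy up in a final write-up: state the base case ($\oU_0 = 0$ and $\ootviptv{\oU_1}{\oU_2} = 0$ automatically, since the two vectors live in complementary summands), and note explicitly that non-breakdown (all $\alpha_k$ finite and nonzero) is what licenses both the $i = k-1$ cancellation and the identification of $\mathrm{span}\{\oU_1,\ldots,\oU_{2n}\}$ with the Krylov space of $\cL$.
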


The next two lemmas show that the first-order algorithm in
Algorithm~\ref{alg:algorithm_2} is equivalent to the Lanczos algorithm.  
\begin{lemma}\label{lem:u_Lanczos}
    Suppose the functions $\ou_j$ ($j = 1,\ldots,n$) are constructed via
    Algorithm~\ref{alg:algorithm_2}.  Then $\ou_j =
    \ghat_j^{-1/2}\vartheta_j$, where the functions $\vartheta_j$
    are obtained from the following Lanczos algorithm:
	\begin{algorithmic}
	    \Require $\ou_1 \equiv \tu(\tx,0) =
	    \ti{b}(\tx)$,
		    $\tv$, $\txmax$, $n$,
		    and $\xi\left(\ti{P}\right)$
	    \Ensure $\ghat_j$ and normalized, orthogonalized primary 
	            snapshots $\vartheta_j$ for $j = 1,\ldots,n$
            \State Set $\vartheta_0 = 0$ and $\vartheta_1 =
		    \dfrac{\ou_1}{\normootv{\ou_1}}$.
	    \For{$j = 1,\ldots,n$}
            \begin{enumerate}
                \item 
		    $a_j^u =\ootv{\vartheta_j} 
		        {\xi\left(\ti{P}\right)\vartheta_j}$;
                \item 
		    $r = \left[\xi\left(\ti{P}\right) - 
		        a_j^uI\right]\vartheta_j -
		        b_{j-1}^u\vartheta_{j-1}$;
	        \item 
		    $b_j^u = \sqrt{\ootv{r}{r}}$;
                \item 
		    $\vartheta_{j+1} = \dfrac{r}{b_j^u}$.
	    \end{enumerate}    
	    \EndFor
	\end{algorithmic}
    Moreover, the Lanczos coefficients
    $a_j^u$, $b_j^u$ from the above algorithm are related to
    $\ghat_j$, $\g_j$ from Algorithm~\ref{alg:algorithm_2} by 
    \begin{equation}\label{eqn:ab_u}
        \begin{cases}
	    a_j^u = -\dfrac{1}{\ghat_j}\left(\dfrac{1}{\g_{j-1}} +
	        \dfrac{1}{\g_j}\right) &\text{for } 
		j = 1,\ldots,n,\myspace
	    b_j^u = \dfrac{1}{\g_j\sqrt{\ghat_j\ghat_{j+1}}} 
	        &\text{for } j = 1,\ldots,n-1,
	\end{cases}
    \end{equation}
    where we have taken $\g_0 \equiv \infty$.  
\end{lemma}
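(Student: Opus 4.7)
The plan is to eliminate the dual iterates $\ow_j$ from the coupled first-order recursions in Algorithm~\ref{alg:algorithm_2}, convert the resulting three-term recurrence for $\ou_j$ into the standard symmetric Lanczos recurrence by normalizing to unit norm, and read off the coefficients~\eqref{eqn:ab_u}.

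First, I would substitute step~$2$ of Algorithm~\ref{alg:algorithm_2} into step~$4$ to obtain
\[
    \ou_{j+1} = \ou_j - \g_j\tv\Lt^T\ow_{j-1} - \g_j\ghat_j\,\tv\Lt^T\tfrac{1}{\tv}\Lt\,\ou_j.
\]
The second term simplifies via the factorization $\xi(\ti{P}) = -\tv\Lt^T\frac{1}{\tv}\Lt$ from~\eqref{eqn:xi_factored}. To eliminate the remaining $\ow_{j-1}$ I would use step~$4$ at the previous index, $\tv\Lt^T\ow_{j-1} = (\ou_{j-1} - \ou_j)/\g_{j-1}$, with the convention $1/\g_0 = 0$ encoding $\ow_0 = 0$. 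Regrouping gives the three-term recursion
\[
    \ou_{j+1} = \g_j\ghat_j\,\xi(\ti{P})\ou_j + \bigl(1 + \tfrac{\g_j}{\g_{j-1}}\bigr)\ou_j - \tfrac{\g_j}{\g_{j-1}}\ou_{j-1} \eqfor j = 1,\ldots,n-1.
\]

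Next, I would set $\vartheta_j \equiv \sqrt{\ghat_j}\,\ou_j$. By Proposition~\ref{prop:alg_2} together with~\eqref{eqn:gammas}, the $\{\vartheta_j\}_{j=1}^n$ are $\ootv{\cdot}{\cdot}$-orthonormal with $\vartheta_1 = \ou_1/\normootv{\ou_1}$. Substituting $\ou_j = \ghat_j^{-1/2}\vartheta_j$ into the three-term recursion and multiplying through by $\sqrt{\ghat_j}$ yields
\[
    \xi(\ti{P})\vartheta_j = \tfrac{1}{\g_j\sqrt{\ghat_j\ghat_{j+1}}}\vartheta_{j+1} - \tfrac{1}{\ghat_j}\bigl(\tfrac{1}{\g_j} + \tfrac{1}{\g_{j-1}}\bigr)\vartheta_j + \tfrac{1}{\g_{j-1}\sqrt{\ghat_{j-1}\ghat_j}}\vartheta_{j-1}.
\]
Matching this against the symmetric Lanczos recurrence $\xi(\ti{P})\vartheta_j = b_j^u\vartheta_{j+1} + a_j^u\vartheta_j + b_{j-1}^u\vartheta_{j-1}$ reads off~\eqref{eqn:ab_u}, and the two expressions for $b_{j-1}^u$ obtained at indices $j$ and $j-1$ agree automatically, which serves as a consistency check. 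Since both the $\vartheta_j$ just constructed and the iterates of the stated Lanczos algorithm are $\ootv{\cdot}{\cdot}$-orthonormal, share the starting vector $\vartheta_1 = \ou_1/\normootv{\ou_1}$, and satisfy the same three-term recurrence with strictly positive off-diagonals $b_j^u = 1/(\g_j\sqrt{\ghat_j\ghat_{j+1}}) > 0$, the standard uniqueness theorem for Lanczos forces them to coincide, completing the identification $\ou_j = \ghat_j^{-1/2}\vartheta_j$.

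The proof is essentially algebraic manipulation; the only care needed is in the base index $j = 1$, where the initialization $\ow_0 = 0$ in Algorithm~\ref{alg:algorithm_2} must correspond simultaneously to $b_0^u\vartheta_0 = 0$ in the Lanczos recursion and to the convention $1/\g_0 = 0$ in the elimination step, which is precisely what taking $\g_0 \equiv \infty$ enforces. No deeper analytic difficulty arises.
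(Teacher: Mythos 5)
Your proposal is correct and follows essentially the same route as the paper's own proof: eliminate $\ow_j$ via steps 2 and 4 of Algorithm~\ref{alg:algorithm_2}, invoke the factorization \eqref{eqn:xi_factored}, normalize $\vartheta_j = \ghat_j^{1/2}\ou_j$, and identify the resulting three-term recurrence with the Lanczos recurrence using the orthonormality from Proposition~\ref{prop:alg_2}. The only quibble is a wording slip --- to isolate $\xi(\ti{P})\vartheta_j$ you divide by $\g_j\sqrt{\ghat_j}$ rather than ``multiply through by $\sqrt{\ghat_j}$'' --- but the displayed recurrence and the coefficients \eqref{eqn:ab_u} are correct.
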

\begin{lemma}\label{lem:w_Lanczos}
    Suppose the functions $\ow_j$ ($j = 1,\ldots,n$) are constructed via
    Algorithm~\ref{alg:algorithm_2}.  Then $\ow_j =
    \g_j^{-1/2}\varrho_j$, where the functions $\varrho_j$
    are obtained from the following Lanczos algorithm:
	\begin{algorithmic}
	\Require $\ow_1 = \ghat_1\dfrac{1}{\tv}\Lt\ou_1$ (from
	    Algorithm~\ref{alg:algorithm_2}), $\tv$, 
	    $\txmax$, $n$, and $\xi\left(\ti{P}_C\right)$
	    \Ensure $\g_j$ and normalized, orthogonalized dual 
	            snapshots $\varrho_j$ for $j = 1,\ldots,n$
            \State Set $\varrho_0 = 0$ and $\varrho_1 =
	    \dfrac{\ow_1}{\normiptv{\ow_1}}$.
	    \For{$j = 1,\ldots,n$}
            \begin{enumerate}
                \item 
		    $a_j^w =\iptv{\varrho_j} {\xi\left(\ti{P}_C\right)
		        \varrho_j}$;
                \item 
		    $r = \left[\xi\left(\ti{P}_C\right) - 
		        a_j^wI\right]\varrho_j -
		        b_{j-1}^w\varrho_{j-1}$;
	        \item 
		    $b_j^w = \sqrt{\iptv{r}{r}}$;
                \item 
		    $\varrho_{j+1} = \dfrac{r}{b_j^w}$.
	    \end{enumerate}    
	    \EndFor
	\end{algorithmic}
    Moreover, the Lanczos coefficients
    $a_j^w$, $b_j^w$ from the above algorithm are related to
    $\ghat_j$, $\g_j$ from Algorithm~\ref{alg:algorithm_2} by 
    \begin{equation*}
        \begin{cases}
	    a_j^w = -\dfrac{1}{\g_j}\left(\dfrac{1}{\ghat_j} +
	        \dfrac{1}{\ghat_{j+1}}\right) &\text{for } 
		j = 1,\ldots,n-1,\myspace
	    b_j^w = \dfrac{1}{\ghat_{j+1}\sqrt{\g_j\g_{j+1}}} 
	        &\text{for } j = 1,\ldots,n-1.
	\end{cases}
    \end{equation*}
\end{lemma}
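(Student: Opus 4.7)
The plan is to run the same argument as in Lemma~\ref{lem:u_Lanczos}, but with the roles of the primary and dual quantities swapped. First I would eliminate $\ou_{j}$ entirely from steps 2 and 4 of Algorithm~\ref{alg:algorithm_2} to obtain a pure three-term recurrence in the $\ow_j$. Specifically, stepping Algorithm~\ref{alg:algorithm_2} forward one index (step 2 at $j+1$) gives
\begin{equation*}
\ow_{j+1} = \ow_j + \ghat_{j+1}\tfrac{1}{\tv}\Lt\ou_{j+1},
\end{equation*}
into which I substitute $\ou_{j+1} = \ou_j - \g_j \tv \Lt^T\ow_j$ from step 4. Then I eliminate the remaining $\ou_j$ via step 2 at index $j$, which gives $\tfrac{1}{\tv}\Lt\ou_j = \ghat_j^{-1}(\ow_j - \ow_{j-1})$. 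Using the factorization $\xi(\ti{P}_C) = -\tfrac{1}{\tv}\Lt\tv\Lt^T$ from \eqref{eqn:xi_factored}, the resulting identity rearranges to
\begin{equation*}
\xi(\ti{P}_C)\ow_j = \frac{1}{\ghat_{j+1}\g_j}\ow_{j+1} - \frac{1}{\g_j}\left(\frac{1}{\ghat_j} + \frac{1}{\ghat_{j+1}}\right)\ow_j + \frac{1}{\ghat_j \g_j}\ow_{j-1},
\end{equation*}
which is the desired three-term recurrence for the (un-normalized) orthogonalized dual snapshots.

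Next I would pass from $\ow_j$ to the normalized basis by substituting $\ow_j = \g_j^{-1/2}\varrho_j$; by Proposition~\ref{prop:alg_2} we have $\iptv{\ow_j}{\ow_j} = \g_j^{-1}$, so $\normiptv{\varrho_j} = 1$, and orthogonality of the $\varrho_j$ is inherited. Multiplying the displayed recurrence through by $\g_j^{1/2}$ and collecting terms yields
\begin{equation*}
\xi(\ti{P}_C)\varrho_j = \frac{1}{\ghat_j \sqrt{\g_{j-1}\g_j}}\,\varrho_{j-1} - \frac{1}{\g_j}\left(\frac{1}{\ghat_j} + \frac{1}{\ghat_{j+1}}\right)\varrho_j + \frac{1}{\ghat_{j+1}\sqrt{\g_j\g_{j+1}}}\,\varrho_{j+1},
\end{equation*}
which is exactly the Lanczos three-term recurrence for a symmetric tridiagonalization of $\xi(\ti{P}_C)$ in the inner product $\iptv{\cdot}{\cdot}$. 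Reading off the coefficients gives the claimed formulas for $a_j^w$ and $b_j^w$, and the symmetry of the off-diagonal entry (the $\varrho_{j-1}$ coefficient matches $b_{j-1}^w$) is an automatic consistency check against the previous step of the recursion.

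Finally I would verify the initialization and termination. The base case $\varrho_1 = \ow_1/\normiptv{\ow_1}$ is immediate once one observes, from Algorithm~\ref{alg:algorithm_2}, that $\ow_1 = \ghat_1\tfrac{1}{\tv}\Lt\ou_1$ and $\iptv{\ow_1}{\ow_1} = \g_1^{-1}$; and the $j=1$ case of the recurrence reduces correctly when we adopt the convention $b_0^w = 0$ (the $\ow_{j-1}$ term vanishes because $\ow_0 = 0$ by initialization of Algorithm~\ref{alg:algorithm_2}). The self-adjointness of $\xi(\ti{P}_C)$ with respect to $\iptv{\cdot}{\cdot}$ (inherited from that of $\tC$, via the factorization \eqref{eqn:xi_factored}) guarantees that the recurrence produced is genuinely Lanczos rather than a nonsymmetric Arnoldi-type recursion.

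The main obstacle, as I see it, is purely bookkeeping: keeping the index shifts straight when eliminating $\ou_j$ (the dual snapshots are indexed with an offset relative to the primary snapshots, so step 2 gives $\ow_j$ in terms of $\ou_j$ while step 4 gives $\ou_{j+1}$ in terms of $\ou_j$, $\ow_j$), and then tracking which $\g$'s and $\ghat$'s appear under which square roots when the substitution $\ow_j = \g_j^{-1/2}\varrho_j$ is made. The Krylov subspace identity $\mathrm{span}\{\ow_1,\ldots,\ow_n\} = \ti{\mathcal{K}}_n^w(\tw_0, \ti{P}_C)$, also from Proposition~\ref{prop:alg_2}, ensures that the Lanczos process applied to $\xi(\ti{P}_C)$ with starting vector $\ow_1/\normiptv{\ow_1}$ produces an orthonormal basis of the same subspace, so the uniqueness of (unnormalized-sign-fixed) Lanczos tridiagonalization identifies the two constructions.
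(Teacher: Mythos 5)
Your proof is correct and follows essentially the same route as the paper: the paper proves Lemma~\ref{lem:u_Lanczos} by eliminating the dual variable from Algorithm~\ref{alg:algorithm_2} to get a three-term recurrence, renormalizing, and reading off the coefficients, and then states that Lemma~\ref{lem:w_Lanczos} ``may be proved similarly'' --- your argument is exactly that mirrored elimination, with the indices and square roots handled correctly (the coefficient of $\varrho_{j-1}$ does reduce to $b_{j-1}^w$, confirming symmetry). The initialization via $\ow_0 = 0$ and the appeal to Proposition~\ref{prop:alg_2} for orthonormality of the $\varrho_j$ complete the identification with the Lanczos recurrence, just as in the paper's template.
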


Recall that, before orthogonalization, the primary and dual snapshots
can be represented in terms of Chebyshev polynomials of the operators
$\ti{P}$ and $\ti{P}_C$, respectively (see
Lemma~\ref{lem:snapshots_uw}).  The next lemma and the remark following
it give representations of the orthogonalized primary and dual snapshots
in terms of polynomials of the operators $\xi\left(\ti{P}\right)$ and
$\xi\left(\ti{P}_C\right)$, respectively.  The true value of
Lemma~\ref{lem:poly_alg_2}, however, is that it provides a proper
normalization for the derivation of explicit formulas for the continued
fraction coefficients (i.e., $\ghat_j$ and $\g_j$) in terms of the data
in both the scalar (1D) and matrix (2D and higher) cases.  We relegate
the proofs to the appendix.
\begin{lemma}\label{lem:poly_alg_2}
    Suppose the orthogonalized snapshots $\ou_j$ and $\ow_j$ ($j =
    1,\ldots,n$) are obtained via Algorithm~\ref{alg:algorithm_2}.  Then
    \begin{equation*}
        \ou_j = q_j^u\left(\xi\left(\ti{P}\right)\right)\ou_1, 
	\eqand{where}
	q_j^u(0) = 1
    \end{equation*}
    and $q_j^u$ is a polynomial of degree $j-1$;
    similarly,
    \begin{equation*}
        \ow_j = q_j^w\left(\xi\left(\ti{P}_C\right)\right)\ow_1,
	\eqand{where}
    q_j^w(0) = \frac{1}{\ghat_1}\ds\sum_{k=1}^j\ghat_k
    \end{equation*}
    and $q_j^w$ is a polynomial of degree $j-1$.  
\end{lemma}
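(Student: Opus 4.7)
The plan is to collapse the two-sequence recurrence of Algorithm~\ref{alg:algorithm_2} into a scalar three-term recurrence for each of $\ou_j$ and $\ow_j$, involving only the operators $\xi(\ti{P})$ and $\xi(\ti{P}_C)$ respectively, and then to apply induction. The crucial input is the factorization $\xi(\ti{P}) = -\tv\Lt^T\tv^{-1}\Lt$ and $\xi(\ti{P}_C) = -\tv^{-1}\Lt\tv\Lt^T$ from \eqref{eqn:xi_factored}.

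For the primary snapshots, substituting step~2 into step~4 gives $\ou_{j+1} - \ou_j = -\g_j\tv\Lt^T\ow_{j-1} + \g_j\ghat_j\,\xi(\ti{P})\ou_j$, and then using step~4 at index $j-1$ in the form $\tv\Lt^T\ow_{j-1} = -\g_{j-1}^{-1}(\ou_j - \ou_{j-1})$ eliminates $\ow_{j-1}$ and produces
\[
    \ou_{j+1} = \left(1 + \frac{\g_j}{\g_{j-1}}\right)\ou_j - \frac{\g_j}{\g_{j-1}}\ou_{j-1} + \g_j\ghat_j\,\xi(\ti{P})\,\ou_j, \qquad j\ge 1,
\]
with the convention $\g_0 \equiv \infty$ so that the $\ou_0$-term drops out at $j=1$, where only the initial condition $\ou_1 = \ti{b}$ is needed. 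A parallel manipulation for the dual sequence---solving step~4 for $\tv^{-1}\Lt\ou_{j+1}$ and substituting into step~2 at index $j+1$---yields
\[
    \ow_{j+1} = \left(1 + \frac{\ghat_{j+1}}{\ghat_j}\right)\ow_j - \frac{\ghat_{j+1}}{\ghat_j}\ow_{j-1} + \ghat_{j+1}\g_j\,\xi(\ti{P}_C)\,\ow_j, \qquad j\ge 1,
\]
with $\ow_0 = 0$.

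Both recurrences have the template $y_{j+1} = \alpha_j y_j + \beta_j y_{j-1} + \gamma_j X y_j$ for a fixed operator $X$, so a direct induction on $j$ yields $\ou_j = q_j^u(\xi(\ti{P}))\ou_1$ and $\ow_j = q_j^w(\xi(\ti{P}_C))\ow_1$ with $\deg q_j^u = \deg q_j^w = j-1$. Evaluating the operator polynomials at the origin produces scalar three-term recurrences for $q_j^u(0)$ and $q_j^w(0)$ with the same coefficients. In the primary case, $\alpha_j + \beta_j = 1$, and the base values $q_1^u(0) = 1$ together with $q_2^u(0) = 1$ (from $\ou_2 = \ou_1 + \g_1\ghat_1\,\xi(\ti{P})\ou_1$) propagate $q_j^u(0) \equiv 1$ trivially. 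In the dual case, a one-line induction confirms that $q_j^w(0) = \ghat_1^{-1}\sum_{k=1}^{j}\ghat_k$ solves the scalar recurrence, the inductive step reducing to the telescoping identity $\sum_{k=1}^{j}\ghat_k - \sum_{k=1}^{j-1}\ghat_k = \ghat_j$.

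The main obstacle is the bookkeeping at the left boundary $j=1$: the convention $\g_0^{-1} = 0$ and the initial condition $\ow_0 = 0$ must be handled so that the three-term recurrence degenerates correctly to a two-term one, and the base values $q_1^w(0) = 1 = \ghat_1/\ghat_1$ and $q_2^w(0) = 1 + \ghat_2/\ghat_1$ must be verified separately to launch the induction for the dual normalization. Everything else is routine algebra.
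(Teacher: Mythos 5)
Your proposal is correct and follows essentially the same route as the paper's proof: eliminate the dual (resp.\ primary) variable from Algorithm~\ref{alg:algorithm_2} using the factorization \eqref{eqn:xi_factored} to obtain the three-term recurrence $\ou_{j+1} = \bigl(1+\g_j\g_{j-1}^{-1}\bigr)\ou_j - \g_j\g_{j-1}^{-1}\ou_{j-1} + \g_j\ghat_j\xi\bigl(\ti{P}\bigr)\ou_j$ (the paper's \eqref{eqn:u_Lanczos_int}), then induct on $j$ and evaluate at $x=0$. The only difference is that the paper writes out just the primary case and declares the dual case ``similar,'' whereas you supply the dual recurrence and the telescoping verification of $q_j^w(0)=\ghat_1^{-1}\sum_{k=1}^j\ghat_k$ explicitly.
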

\begin{remark}\label{rem:polynomials}
    Using the fact that, in spatial coordinates $x$, $\vartheta_1(x) =
    \ghat_1^{1/2}b(x)$, one can show $q_j^u =
    \ghat_j^{-1/2}\ghat_1^{1/2}q_j^{\xi}$, where $\{q_j^{\xi}\}_{j=1}^n$
    is the set of orthonormal polynomials generated by
    Algorithm~\ref{alg:Lanczos} (below) with the inner product
    \[
        \left\langle p, q \right\rangle_{y,\xi(\theta)} \equiv 
	    \frac{1}{c} \ds\sum_{j=1}^n y_j^2 
	    p\left(\xi(\theta_j)\right)q\left(\xi(\theta_j)\right)
    \]
    in place of the inner product
    $\left\langle \cdot, \cdot \right\rangle_{y,\theta}$.  
\end{remark}


\section{Transformation of the time-domain data to an equivalent 
finite-difference reduced-order model}\label{sec:transformation}

Our goal in this section is to construct a finite-difference scheme
involving a data-driven reduced-order model for the propagator $P =
\cos\left(\tau\sqrt{A}\right)$ that reproduces the data \eqref{eqn:data}
exactly.  The coefficients of this finite-difference scheme (which is
also our ROM) are essentially localized averages of the velocity.  Thus
the construction of the ROM is the core of our inversion method, since
it transforms the time-domain data (which is all we have) into a ``more
usable'' form.


\subsection{Chebyshev moment problem in Galerkin--Ritz formulation}
\label{subsec:Galerkin_Ritz}

We solve the data-interpolation problem by constructing a Gaussian
quadrature rule with nodes $\theta_j$ and weights $y_j^2$ for the weight
$\eta_0$ (defined in \eqref{eqn:data}); that is, we find spectral nodes
$\theta_j$ and weights $y_j^2$ such that
\begin{equation}\label{eqn:chebmatch}
    \int_{-1}^1 T_k(\mu) \eta_0(\mu) \di{\mu}
    = \sum_{j=1}^{n} y_j^2 T_k(\theta_j) 
    = f_k \eqfor k = 0,\ldots,2n-1.
\end{equation}
This is the classical quadrature problem (in the Chebyshev basis), and
the existence and uniqueness of its solution are given by the following
well-known result --- see, e.g., Theorems 1.7, 1.19 (which can be
extended to discrete measures), and 1.46 in the book by Gautschi
\cite{Gautschi:2004:OPC}.
\begin{lemma}\label{lem:points_of_increase} 
    Let  $c^{-1} \eta_0(\mu) \di{\mu}$ be a (probability) measure such
    that $\int_{-1}^s c^{-1}\eta_0(\mu)\di{\mu}$ has at least $n$ points
    of increase on $[-1,1]$ (collectively, such points are also known as
    the support or spectrum of the measure $c^{-1}\eta_0(\mu)\di{\mu}$).
    Then \eqref{eqn:chebmatch} has a unique solution with positive $y_j$
    and noncoinciding $\theta_j \in (-1,1)$.
\end{lemma}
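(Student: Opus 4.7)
The plan is to recognize \eqref{eqn:chebmatch} as the classical Gaussian quadrature problem for the probability measure $d\sigma(\mu) \equiv c^{-1}\eta_0(\mu)\di{\mu}$ on $[-1,1]$ and invoke the standard existence/uniqueness theory for orthogonal polynomials. Because $\{T_0, T_1, \ldots, T_{2n-1}\}$ spans the polynomials of degree at most $2n-1$, \eqref{eqn:chebmatch} is equivalent to matching the first $2n$ power moments $m_k = \int \mu^k \di{\sigma}$, so working in the Chebyshev basis introduces no essential new difficulty beyond a change of basis at the end.

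First, I would observe that the bilinear form $\langle p, q \rangle_\sigma \equiv \int_{-1}^{1} p(\mu) q(\mu) \di{\sigma}(\mu)$ is positive definite on polynomials of degree less than $n$: a nonzero polynomial of degree at most $n-1$ has at most $n-1$ zeros in $[-1,1]$, whereas $\sigma$ has at least $n$ points of increase, so $\langle p, p \rangle_\sigma > 0$. Applying Gram--Schmidt to $\{1,\mu,\ldots,\mu^n\}$ in this inner product produces a sequence $p_0, p_1, \ldots, p_n$ with $\deg p_k = k$, orthonormal for $k<n$ and with $p_n$ orthogonal to $p_0,\ldots,p_{n-1}$ and having positive leading coefficient.

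Next, the classical theorem on zeros of orthogonal polynomials (see, e.g., Gautschi \cite{Gautschi:2004:OPC}) guarantees that $p_n$ has $n$ simple real zeros lying in the convex hull of $\mathrm{supp}(\sigma) \subset [-1,1]$; strict containment in $(-1,1)$ follows from inspection of \eqref{eqn:eta}, which shows that $\eta_0$ is supported inside the open interval. I would then define the nodes $\theta_1, \ldots, \theta_n$ as these zeros and the weights as the Christoffel numbers $y_j^2 \equiv \bigl(\sum_{k=0}^{n-1} p_k(\theta_j)^2\bigr)^{-1} > 0$. The Gaussian exactness property follows from polynomial division: any polynomial $r$ of degree at most $2n-1$ admits a representation $r = p_n q + s$ with $\deg q, \deg s \le n-1$, giving $\int r \di{\sigma} = \langle p_n, q \rangle_\sigma + \int s \di{\sigma} = 0 + \sum_j y_j^2 s(\theta_j) = \sum_j y_j^2 r(\theta_j)$, since $p_n(\theta_j) = 0$. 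Taking $r = T_k$ for $k = 0, \ldots, 2n-1$ and multiplying by $c$ yields \eqref{eqn:chebmatch}.

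For uniqueness, suppose $\{(y_j')^2, \theta_j'\}_{j=1}^{n}$ is another solution. For any polynomial $p$ of degree less than $n$, the product $p(\mu) \prod_{i=1}^{n}(\mu - \theta_i')$ has degree at most $2n-1$ and vanishes at each $\theta_j'$, so $\int p(\mu) \prod_i(\mu - \theta_i') \di{\sigma} = 0$; hence $\prod_i(\mu - \theta_i')$ is orthogonal to all polynomials of degree less than $n$, which forces it to be a scalar multiple of $p_n$ and $\{\theta_j'\} = \{\theta_j\}$. The weights $(y_j')^2$ are then determined uniquely by the nonsingular Vandermonde-type system coming from \eqref{eqn:chebmatch}. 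The main obstacle is the strict containment $\theta_j \in (-1,1)$ (rather than the closed interval), which depends on the specific support of $\eta_0$ from \eqref{eqn:eta} rather than on the abstract quadrature theory; every other step reduces to a direct application of classical orthogonal-polynomial arguments.
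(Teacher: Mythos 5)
Your proof is correct and follows essentially the same route as the paper, which does not write out an argument at all but simply cites the classical Gaussian-quadrature/orthogonal-polynomial theory (Theorems 1.7, 1.19, and 1.46 of Gautschi \cite{Gautschi:2004:OPC}); you have reproduced precisely the standard argument those theorems encapsulate (positive definiteness of the inner product on polynomials of degree $<n$ from the $n$ points of increase, zeros of $p_n$ as nodes, Christoffel numbers as weights, division-with-remainder for exactness, and orthogonality for uniqueness). Your explicit flagging of the one genuinely nonclassical point --- that $\theta_j \in (-1,1)$ rather than $[-1,1]$ requires knowing the support of $\eta_0$ avoids the endpoints, which must be read off from \eqref{eqn:eta} --- is a detail the paper glosses over, and you handle it appropriately.
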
 

As we discussed in \S~\ref{sec:continuum}, for regular enough wavespeed
$v$ it can be shown that $c^{-1}\eta_0(\mu)\di{\mu}$ satisfies the
hypothesis of Lemma~\ref{lem:points_of_increase} --- see
\S~\ref{subsec:eta0_increase} in the appendix.

There are numerous algorithms for the quadrature problem
\eqref{eqn:chebmatch} (see, e.g., the end of \S~1.4.1 and Chapter 3 in
\cite{Gautschi:2004:OPC}); however, for the sake of the continuum
interpretation of our approach we give an algorithm based on the
Galerkin projection method onto Krylov subspaces.  The proofs of the
remaining lemmas in this section are given in the appendix.

The following lemma gives the Galerkin representation of $u_k$ and $f_k$
in the Krylov subspace $\mathcal{K}_n(u_0,P)$.
\begin{lemma}\label{lem:data_lemma}
    If $\eta_0$ satisfies the hypothesis of
    Lemma~\ref{lem:points_of_increase}, then
    \begin{equation}\label{eqn:state}
    u_k= UT_k(\la{H}) \la{e}_1 \eqfor k=0,\ldots,n-1,
    \end{equation}
    and 
    \begin{equation}\label{eqn:match}
        f_k 
        =\la{e}_1^T(U^* U)T_k(\la{H}) \la{e}_1
        \eqfor k=0,\ldots,2n-1,
    \end{equation}
    where 
    \begin{equation}\label{eqn:H} 
        \la{H} \equiv (U^* U)^{-1}(U^*P U)\in\bfR^{n\times n}.
    \end{equation}
\end{lemma}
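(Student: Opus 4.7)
The plan is to exploit the Chebyshev three-term recursion satisfied by the snapshots $u_k = T_k(P)b$ together with the Galerkin projection structure of $\la{H}$. Two preliminary observations simplify everything. First, $(U^*U)\la{H} = U^*PU$ is symmetric (since $P$ is self-adjoint with respect to $\llangle\cdot,\cdot\rrangle$), so $\la{H}$ is self-adjoint with respect to the weighted inner product $\langle x, y\rangle_{U^*U}\equiv x^T(U^*U)y$; the same then holds for every polynomial in $\la{H}$. Second, the definition of $\la{H}$ rewrites as $U\la{H} = \Pi_U PU$, where $\Pi_U\equiv U(U^*U)^{-1}U^*$ is the $\llangle\cdot,\cdot\rrangle$-orthogonal projection onto $\Range U$.

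For \eqref{eqn:state}, I would induct on $k$. The base cases $k=0,1$ are direct: $UT_0(\la{H})\la{e}_1 = U\la{e}_1 = u_0$, and $UT_1(\la{H})\la{e}_1 = U\la{H}\la{e}_1 = \Pi_U Pu_0 = \Pi_U u_1 = u_1$, where the last equality uses $u_1\in\Range U$ (valid for $n\geq 2$). For the inductive step with $k\geq 1$, the Chebyshev recursion gives
\[
    UT_{k+1}(\la{H})\la{e}_1 = 2U\la{H}T_k(\la{H})\la{e}_1 - UT_{k-1}(\la{H})\la{e}_1 = 2\Pi_U Pu_k - u_{k-1}
\]
by hypothesis. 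Since the snapshots also satisfy $Pu_k = \tfrac{1}{2}(u_{k+1}+u_{k-1})$ (with $u_{-1}=u_1$) and both terms lie in $\Range U$ for $k+1\leq n-1$, the projection acts as the identity and I obtain $UT_{k+1}(\la{H})\la{e}_1 = u_{k+1}$.

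For \eqref{eqn:match}, let $F_k\equiv\la{e}_1^T(U^*U)T_k(\la{H})\la{e}_1$. The case $k\leq n-1$ is immediate from \eqref{eqn:state}: $F_k = \llangle u_0, UT_k(\la{H})\la{e}_1\rrangle = \llangle u_0, u_k\rrangle = f_k$. For $n\leq k\leq 2n-2$, I would apply the Chebyshev product formula $T_k = 2T_iT_j - T_{|i-j|}$ with indices $i,j\leq n-1$ and $i+j=k$. The $U^*U$-self-adjointness of $T_i(\la{H})$ converts $F_k$ into $2\llangle u_i, u_j\rrangle - F_{|i-j|}$, and expanding $\llangle u_i, u_j\rrangle = \tfrac{1}{2}(f_{i+j}+f_{|i-j|})$ closes a strong induction, giving $F_k=f_k$.

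The main obstacle is the borderline case $k=2n-1$, where no product decomposition $i+j=2n-1$ with both $i,j\leq n-1$ exists. I would apply the product formula with $i=n-1$, $j=n$, obtaining $T_{2n-1}=2T_{n-1}T_n - T_1$, and then identify $UT_n(\la{H})\la{e}_1$ explicitly. Using $T_n(\la{H}) = 2\la{H}T_{n-1}(\la{H}) - T_{n-2}(\la{H})$ together with \eqref{eqn:state} for degrees $n-1, n-2$ and the projection identity $U\la{H}T_{n-1}(\la{H})\la{e}_1 = \Pi_U Pu_{n-1} = \tfrac{1}{2}(\Pi_U u_n + u_{n-2})$, a short computation yields the ``Galerkin-extended'' formula $UT_n(\la{H})\la{e}_1 = \Pi_U u_n$. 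Since $u_{n-1}\in\Range U$, self-adjointness of $\Pi_U$ then gives $\llangle u_{n-1}, \Pi_U u_n\rrangle = \llangle u_{n-1}, u_n\rrangle = \tfrac{1}{2}(f_{2n-1}+f_1)$, and combined with $F_1 = f_1$ this produces $F_{2n-1} = f_{2n-1}$, completing the proof.
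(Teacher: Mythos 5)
Your proof is correct, and for $k\le n-1$ it runs along the same lines as the paper's (induction on the Chebyshev three-term recursion for \eqref{eqn:state}, after which \eqref{eqn:match} is immediate). For the high moments $k\ge n$ the two arguments share the same engine --- split $T_k$ into a product of polynomials that the Galerkin model represents exactly, then move one factor across the inner product by self-adjointness --- but execute it with genuinely different decompositions. The paper lifts $\la{H}$ to the operator $\ha{H}=U(U^*U)^{-1}U^*P$ acting on $L_2$, invokes the mixed identity $T_{n+j}=T_{j+1}^{(2)}T_{n-1}-T_j^{(2)}T_{n-2}$ with Chebyshev polynomials of the second kind anchored at the fixed degrees $n-1$ and $n-2$, and uses the expansion of $T_j^{(2)}$ in first-kind polynomials to identify $UT_j^{(2)}(\la{H})\la{e}_1$ with $T_j^{(2)}(P)u_0$, treating all $j=0,\dots,n-1$ in one formula. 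You instead stay in $\mathbb{R}^n$ with the $U^*U$-inner product, use the pure first-kind product formula $2T_iT_j=T_{i+j}+T_{|i-j|}$ with a balanced split $i,j\le n-1$, and reduce everything to the snapshot Gram entries $\llrr{u_i}{u_j}=\tfrac{1}{2}\left(f_{i+j}+f_{|i-j|}\right)$, which is exactly the computation underlying Lemma~\ref{lem:TplusH}. The price of your route is the separate edge case $k=2n-1$, where no balanced split with both factors of degree at most $n-1$ exists; you settle it cleanly via the identity $UT_n(\la{H})\la{e}_1=U(U^*U)^{-1}U^*u_n$ and self-adjointness of the projection. The benefit is that second-kind polynomials never appear, and the borderline degree-$n$ factor --- which the paper's uniform formula also meets at $j=n-1$, where $T_n^{(2)}$ has degree $n$ and the projection must be absorbed by pairing against an element of $\Range U$ --- is handled explicitly rather than implicitly. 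Both proofs are complete; yours is arguably the more self-contained of the two.
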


We give the spectral decomposition of the matrix $\la{H}$ in the next
lemma.
\begin{lemma}\label{lem:specH}
    Suppose $\eta_0$ satisfies the hypothesis of
    Lemma~\ref{lem:points_of_increase} and $\la{H}$ is defined as in
    \eqref{eqn:H}.  Then $\la{H}$ is self adjoint with respect to the
    inner product $\usuh{\cdot}{\cdot}$, defined by 
    \[
        \usuh{\la{x}}{\la{z}} \equiv
	    \left[(U^*U)^{1/2}\la{x}\right]^T
	    \left[(U^*U)^{1/2}\la{z}\right]
	    = \la{x}^T(U^*U)\la{z} \eqfor 
	    \la{x}, \la{z} \in \mathbb{R}^n.
    \]
    The spectral decomposition of $\la{H}$ can be written as
    \begin{equation}\label{eqn:generaleig}
        \la{H}  =  \la{\Phi}\la{\Theta}\la{\Phi}^T U^* U,
    \end{equation}
    where $\la{\Theta}$ is a diagonal matrix of the eigenvalues of
    $\la{H}$ and $\la{\Phi}$ is the $U^* U$-orthonormal eigenvector
    matrix, i.e., $\la{\Phi}^T U^* U \la{\Phi} = \la{I}$.
\end{lemma}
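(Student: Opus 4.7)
The plan is to verify the two claims in turn: first the self-adjointness of $\la{H}$ with respect to $\usuh{\cdot}{\cdot}$, then the stated spectral decomposition.

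First I would observe that $U^*PU$ is a symmetric matrix. This is the key structural fact and it follows directly from the self-adjointness of $P = \cos\bigl(\tau\sqrt{A}\bigr)$ with respect to $\llangle\cdot,\cdot\rrangle$ (since $P$ is a real continuous function of $A$ on its spectrum, as noted after \eqref{eqn:iprod2}); indeed, $(U^*PU)_{ij} = \llangle u_i, P u_j\rrangle = \llangle P u_i, u_j\rrangle = (U^*PU)_{ji}$. Combined with the symmetry of $U^*U$, this gives $\la{H}^T(U^*U) = (U^*PU)^T(U^*U)^{-1}(U^*U) = U^*PU$, and therefore for any $\la{x},\la{z}\in\mathbb{R}^n$,
\[
    \usuh{\la{H}\la{x}}{\la{z}} = \la{x}^T\la{H}^T(U^*U)\la{z} = \la{x}^T(U^*PU)\la{z} = \la{x}^T(U^*U)\la{H}\la{z} = \usuh{\la{x}}{\la{H}\la{z}},
\]
establishing the self-adjointness claim. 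Note that Lemma~\ref{lem:points_of_increase} guarantees $\rank U = n$, so $U^*U$ is symmetric positive definite and its square roots $(U^*U)^{\pm 1/2}$ are well-defined.

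For the spectral decomposition I would pass to a standard symmetric eigenproblem via a similarity transformation. Let
\[
    \la{M} \equiv (U^*U)^{1/2}\la{H}(U^*U)^{-1/2} = (U^*U)^{-1/2}(U^*PU)(U^*U)^{-1/2}.
\]
Since $U^*PU$ is symmetric and $(U^*U)^{-1/2}$ is symmetric, $\la{M}$ is symmetric in the ordinary sense. The standard spectral theorem then gives $\la{M} = \la{\Psi}\la{\Theta}\la{\Psi}^T$ with $\la{\Theta}$ real diagonal and $\la{\Psi}$ orthogonal.

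The final step is to unwind the similarity transformation. Setting $\la{\Phi} \equiv (U^*U)^{-1/2}\la{\Psi}$, so that $\la{\Psi} = (U^*U)^{1/2}\la{\Phi}$ and $\la{\Psi}^T = \la{\Phi}^T(U^*U)^{1/2}$, we obtain
\[
    \la{H} = (U^*U)^{-1/2}\la{M}(U^*U)^{1/2} = \la{\Phi}\la{\Theta}\la{\Phi}^T(U^*U),
\]
as required, and the orthogonality of $\la{\Psi}$ translates to $\la{\Phi}^T(U^*U)\la{\Phi} = \la{\Psi}^T\la{\Psi} = \la{I}$. Neither step involves a serious obstacle once the symmetry of $U^*PU$ is in hand; the only thing to be careful about is invoking Lemma~\ref{lem:points_of_increase} to ensure the full rank of $U$ (and hence the invertibility of $U^*U$), which legitimizes both $\la{H}$ itself and the fractional powers used in the diagonalization argument.
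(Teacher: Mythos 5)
Your proposal is correct and follows essentially the same route as the paper: both arguments rest on the symmetry of $U^*PU$ and $U^*U$ (hence positive definiteness of $U^*U$ via Lemma~\ref{lem:points_of_increase}), symmetrize $\la{H}$ by conjugation with $(U^*U)^{1/2}$, and pull back the orthogonal eigendecomposition of the resulting symmetric matrix. Your self-adjointness step, via the identity $\la{H}^T(U^*U) = U^*PU = (U^*U)\la{H}$, is a slightly more compact rendering of the paper's chain of inner-product manipulations, but the substance is the same.
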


Substituting \eqref{eqn:generaleig} into \eqref{eqn:match} we obtain
\begin{equation}\label{eqn:quadrature} 
    f_k= \la{\chi}^T T_k(\la{\Theta})\la{\chi} \eqfor k=0,\ldots,2n-1, 
    \eqand{where} \la{\chi}\equiv \la{\Phi}^T U^*U \la{e}_1.
\end{equation}
Comparing \eqref{eqn:quadrature} and \eqref{eqn:chebmatch}, we derive 
\begin{equation}\label{eqn:wn} 
    \diag \ {\theta_i}=\la{\Theta} \eqand{and} 
    (y_1,\ldots,y_n)^T=\la{\chi}.
\end{equation}
In other words, once we know $\la{\Theta}$ and $\la{\chi}$ we may
compute the nodes $\theta_j$ and weights $y_j^2$ for the Gaussian
quadrature \eqref{eqn:chebmatch}.

The matrices $U^* U$ and $U^*P U$ (and, hence, $\la{H}$ via
\eqref{eqn:H}) can be computed in terms of the data via the following
lemma (the proof is given in \S\ref{subsec:lem3_proof} of the
appendix).
\begin{lemma}\label{lem:TplusH}
    We use the notation $\la{T}$(first column, first row) for Toeplitz
    matrices and $\la{H}$(first column, last row) for Hankel matrices.
    Then if we set
    \[
        \begin{split}
	    \la{T}^0 &\equiv\la{T}([f_{0}, f_{1}, f_{2},\ldots,f_{n-1}], 
    	          [f_{0}, f_{1}, f_{2}, \ldots, f_{n-1}]), \\
	    \la{T}^+ &\equiv\la{T}([f_{1}, f_{2}, f_{3}, \ldots, f_{n}], 
    	          [f_{1}, f_{0}, f_{1}, \ldots, f_{n-2}]), \\
	    \la{T}^- &\equiv\la{T}([f_{1}, f_{0}, f_{1},\ldots,f_{n-2}], 
    	          [f_{1}, f_{2}, f_{3}, \ldots, f_{n}]), \\
	    \la{H}^0 &\equiv\la{H}([f_{0}, f_{1}, f_{2},\ldots,f_{n-1}], 
    	          [f_{n-1}, f_{n}, f_{n+1}, \ldots, f_{2n-2}]), \\
	    \la{H}^+ &\equiv\la{H}([f_{1}, f_{2}, f_{3}, \ldots, f_{n}], 
    	          [f_{n}, f_{n+1}, f_{n+2}, \ldots, f_{2n-1}]), \\
	    \la{H}^- &\equiv\la{H}([f_{1}, f_{0}, f_{1},\ldots,f_{n-2}], 
    	          [f_{n-2}, f_{n-1}, f_{n}, \ldots, f_{2n-3}]), 
        \end{split}
    \]
    we get the expressions
    \begin{equation}\label{eqn:datamatrix}
        U^*PU=\frac{1}{4} 
            \left( \la{T}^+ + \la{T}^- + \la{H}^+ + \la{H}^- \right)
	    \eqand{and}
	U^*U= \frac{1}{2} \left( \la{T}^0 + \la{H}^0 \right).
    \end{equation}
\end{lemma}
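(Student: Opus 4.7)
The whole lemma should reduce to the Chebyshev product identity
\[
T_i(x)T_j(x) = \tfrac{1}{2}\bigl[T_{i+j}(x) + T_{|i-j|}(x)\bigr],
\]
applied in the operator calculus for $P$. I would first note that $A$ is self adjoint with respect to $\llrr{\cdot}{\cdot}$, hence so are $\sqrt{A}$ and $P = \cos(\tau\sqrt{A})$, and therefore every real polynomial in $P$; in particular each $T_k(P)$ is self adjoint with respect to $\llrr{\cdot}{\cdot}$.

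For the formula for $U^*U$, the plan is to combine $u_k = T_k(P) b$ from \eqref{eqn:snapshots_def}, the self-adjointness of $T_i(P)$, the product identity above, and $f_k = \left\llangle b, T_k(P) b \right\rrangle$ from \eqref{eqn:f_k} to obtain, for $0 \le i,j \le n-1$,
\[
\left\llangle u_i, u_j \right\rrangle = \left\llangle b, T_i(P)T_j(P) b \right\rrangle = \tfrac{1}{2}\bigl[f_{i+j} + f_{|i-j|}\bigr].
\]
Reading off the $(i+1,j+1)$ entries of $\la{H}^0$ and $\la{T}^0$ from the definitions shows that these equal $f_{i+j}$ and $f_{|i-j|}$ respectively, which yields $U^*U = \tfrac{1}{2}(\la{T}^0 + \la{H}^0)$ at once.

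For $U^*PU$ I would apply the product identity one more time (or equivalently use $xT_k(x) = \tfrac{1}{2}[T_{k+1}(x) + T_{|k-1|}(x)]$ on top of the previous calculation) to arrive at
\[
\left\llangle u_i, P u_j \right\rrangle = \tfrac{1}{4}\bigl[f_{i+j+1} + f_{|i+j-1|} + f_{|i-j+1|} + f_{|i-j-1|}\bigr].
\]
Then I would identify the four summands with the $(i+1,j+1)$ entries of $\la{H}^+$, $\la{H}^-$, $\la{T}^+$, and $\la{T}^-$ respectively, yielding $U^*PU = \tfrac{1}{4}(\la{T}^+ + \la{T}^- + \la{H}^+ + \la{H}^-)$.

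The main obstacle is purely index bookkeeping. The formula for $\left\llangle u_i, Pu_j \right\rrangle$ contains absolute values of shifted indices that can vanish (forcing an $f_0$) or equal $1$ (forcing an $f_1$), and these boundary cases are exactly what necessitate the unusual first-row pattern $(f_1, f_0, f_1, f_2, \ldots, f_{n-2})$ of $\la{T}^+$ and the analogous first-column patterns of $\la{T}^-$ and $\la{H}^-$. Once one confirms by direct case analysis ($i-j \ge 0$ vs.\ $i-j<0$, and $i+j\ge 1$ vs.\ $i+j=0$) that $(\la{T}^+)_{i+1,j+1} = f_{|i-j+1|}$, $(\la{T}^-)_{i+1,j+1} = f_{|i-j-1|}$, $(\la{H}^+)_{i+1,j+1} = f_{i+j+1}$, and $(\la{H}^-)_{i+1,j+1} = f_{|i+j-1|}$, both assertions of the lemma follow.
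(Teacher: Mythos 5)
Your proposal is correct and follows essentially the same route as the paper: the paper uses the product-to-sum identity for $\cos(j\tau\sqrt{A})\cos(k\tau\sqrt{A})$ (equivalently, your Chebyshev identity $T_iT_j=\tfrac12[T_{i+j}+T_{|i-j|}]$) together with self-adjointness to get $\llangle u_i,u_j\rrangle=\tfrac12(f_{i+j}+f_{|i-j|})$, and then handles $U^*PU$ via $Pu_j=\tfrac12(u_{j+1}+u_{j-1})$, which is your second application of the identity. The only cosmetic difference is that the paper adopts the convention $f_{-l}=f_l$ and writes out the resulting matrices entrywise, whereas you carry absolute values and do the boundary-case bookkeeping directly; both yield the same Toeplitz--Hankel decomposition.
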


In summary, formulas \eqref{eqn:H}--\eqref{eqn:datamatrix} provide the
algorithm for computing $y_j$ and $\theta_j$ from the data for
$j=1,\ldots,n$.  

Finally, substituting  \eqref{eqn:generaleig} into \eqref{eqn:state} we
obtain
\begin{equation}\label{eqn:ritz} 
    u_k = Z T_k(\la{\Theta})\la{\chi} \eqfor k=0,\ldots,n-1,
\end{equation}
where $Z=U\la{\Phi}$. By construction, $Z \la{e}_j$ and $\theta_j$ are
the Ritz pairs of $P$ on the Krylov subspace $\mathcal{K}_n(u_0, P)$.  


\subsection{Finite-difference recursion}\label{subsec:fdr}
Let us find a symmetric, tridiagonal matrix
\begin{equation}\label{eqn:P_n_def}
    \la{P}_n=
    \begin{bmatrix}
        \alpha_1 & \beta_1  &             &             \\
	\beta_1  & \alpha_2 & \ddots      &             \\ 
	         & \ddots   & \ddots      & \beta_{n-1} \\
                 &          & \beta_{n-1} & \alpha_n
    \end{bmatrix}
    = \la{P}_n^T \in \bfR^{n\times n}
\end{equation}
such that
\begin{equation}\label{eqn:match-1}
    \la{b}_n^T T_k (\la{P}_n) \la{b}_n =
    \sum_{j=1}^{n} y_j^2 T_k(\theta_j)= f_k
    \eqfor k=0,\ldots,2n-1,
\end{equation}
where $c$ is defined in \eqref{eqn:c} and $\la{b}_n \equiv
\sqrt{c}\la{e}_1$.  Taking $k = 0$ in \eqref{eqn:match-1} gives 
\begin{equation}\label{eqn:c_again}
    c = \sum_{j=1}^{n} y_j^2 = f_0 = \int_{-1}^1 \eta_0(\mu) \di{\mu}.
\end{equation}
The expression on the left in \eqref{eqn:match-1} is the ROM for the
data as expressed in \eqref{eqn:f_k}.  We will see that $\la{P}_n$ and
$\la{b}_n$ are the projections (up to scaling for $\la{b}_n$ of the
propagator $P = \cos\left(\tau\sqrt{A}\right)$ and the
source/measurement distribution $b$, respectively, onto the space
spanned by the (orthogonalized) snapshots, namely
$\mathcal{K}_n(u_0,P)$; i.e., $\la{P}_n$ is our ROM of $P$ and
$\la{b}_n$ is our ROM of $b$.

In \S~\ref{subsec:Galerkin_Ritz}, we constructed a Gaussian
quadrature with respect to the weight $\eta_0/c$ with nodes $\theta_j
\in [-1,1]$ and positive weights $y_j^2/c$ such that, for sufficiently
smooth functions $g$, 
\begin{equation}\label{eqn:GQ}
    \frac{1}{c}\sum_{j=1}^n y_j^2 g(\theta_j) \approx \int_{-1}^1
        g(\mu)\frac{\eta_0(\mu)}{c}\di{\mu};
\end{equation} 
this Gaussian quadrature rule is exact when $g$ is a polynomial of
degree less than or equal to $2n-1$. It is well known that the
eigenvalues and the squared first components of the (properly scaled)
eigenvectors of a symmetric, tridiagonal matrix $\la{P}_n$ with positive
off-diagonal entries --- a \emph{Jacobi matrix} --- are the nodes and
weights, respectively, of a Gaussian quadrature \cite{Golub:1969:CGQ,
Boley:1987:SMI}.   Thus our task
is to construct the Jacobi matrix $\la{P}_n$ with eigendecomposition 
\begin{equation}\label{eqn:P_n_eig}
    \la{P}_n \la{X} = \la{\Theta}\la{X},
\end{equation}
where the eigenvalues of $\la{P}_n$ are $\theta_j$ and the eigenvectors
$\la{X}_j$ satisfy $\la{X}_i^T\la{X}_k = \delta_{ik}$ (where
$\delta_{ik}$ is the Kronecker delta symbol) and
\begin{equation}\label{eqn:X_j_1}
    (\la{e}_1^T\la{X}_j)^2 = y_j^2/c.
\end{equation}

The entries of the Jacobi matrix $\la{P}_n$ are the coefficients of the
three-term recurrence relation satisfied by the set of polynomials
$\mathcal{P}_n = \{q_0, q_1, \ldots, q_{n-1}\}$, where $q_k$ is a
polynomial of degree less than or equal to $k$ and the polynomials in
$\mathcal{P}_n$ are orthonormal with respect to the weight
$\eta_0(\mu)/c$, i.e., 
\[
    \left\langle q_i, q_k \right\rangle_{\eta_0/c} \equiv 
        \int_{-1}^1 q_i(\mu)q_k(\mu) \frac{\eta_0(\mu)}{c} \di{\mu} 
	= \delta_{ik}.
\] 
Moreover, the
Gaussian quadrature \eqref{eqn:GQ} computes the inner product with
weight $\eta_0/c$ between any two polynomials in this orthonormal set
exactly (since $q_iq_k$ is a polynomial of degree $i+k \le 2n-2$), so 
\[
    \left\langle q_i, q_k \right\rangle_{\eta_0/c} = 
    \frac{1}{c}\sum_{j=1}^n y_j^2q_i(\theta_j)q_k(\theta_j) = 
    \delta_{ik}.
\]

The Jacobi matrix $\la{P}_n$ may be constructed via the Lanczos
algorithm in Algorithm~\ref{alg:Lanczos} (below), which is equivalent to
running the three-term recurrence relation for the set of orthonormal
polynomials $\mathcal{P}_n$.  The appropriate inner product is given by
the normalized spectral measure 
\[
    \langle p,q\rangle_{y,\theta} \equiv
    \frac{1}{c}\sum_{j=1}^n y_j^2 p(\theta_j) q(\theta_j),
\]
which is simply the Gaussian quadrature \eqref{eqn:GQ} applied to
$\left\langle p, q \right\rangle_{\eta_0/c}$ (which is exact
for the polynomials in Algorithm~\ref{alg:Lanczos}).  
\begin{algorithm}[H]
    \caption{Lanczos Algorithm for Computing $\alpha_j$, $\beta_j$.}
    \label{alg:Lanczos}
    \begin{algorithmic}
        \Require $\theta_j$, $y_j$, $j = 1,\ldots,n$
        \Ensure $\alpha_j$ ($j = 1,\ldots,n$) and $\beta_j$ ($j =
	    1,\ldots,n-1$), i.e., the nonzero elements of $\la{P}_n$
	\State Set $q_0(x) \equiv 0$ and $q_1(x) \equiv 1$.
        \For{$j = 1,\ldots,n$}
            \begin{enumerate}
                \item $\alpha_j = \ipyt{q_j}{x q_j}
                    = \left\langle q_j, x q_j\right\rangle_{\eta_0/c}$;
	        \item $r=(x-\alpha_j)q_j-\beta_{j-1}q_{j-1}$;
		\item $\beta_j = \sqrt{\ipyt{r}{r}} = 
		    \sqrt{\left\langle r, r \right\rangle_{\eta_0/c}}$;
                \item $q_{j+1}=\dfrac{r}{\beta_j}$.
	    \end{enumerate}    
        \EndFor
    \end{algorithmic}
\end{algorithm}

Finally, the Chebyshev polynomials of the first kind satisfy the
three-term recursion
\[
    T_{k+1}(x) = 2x T_k(x) - T_{k-1}(x), \qquad
    T_0 = 1, \ T_{-1} = T_1.
\]
This yields the following second-order finite-difference Cauchy problem
for the vector $\la{\varsigma}_k \equiv T_k (\la{P}_n)\la{b}_n$:
\begin{equation}\label{eqn:tsteppingd}
    \frac{\la{\varsigma}_{k+1} - 2 \la{\varsigma}_k +
        \la{\varsigma}_{k-1}}{\tau^2} = 
        \xi(\la{P}_n)\la{\varsigma}_k,
        \qquad \la{\varsigma}_0=\la{b}_n, \
        \la{\varsigma}_{-1}=\la{\varsigma}_{1}
\end{equation}
($\xi$ is defined in \eqref{eqn:xi}).  The recursion
\eqref{eqn:tsteppingd} is the reduced-order version of the recursion
\eqref{eqn:tstepping}; in particular, the $n\times n$ Jacobi matrix
$\la{P}_n$ is our ROM of the propagator $P =
\cos\left(\tau\sqrt{A}\right)$ and $\la{b}_n$ is our ROM of the
source/measurement distribution $b$.  According to \eqref{eqn:f_k}, for
$k = 0,\ldots,2n-1$, our measurements may be written as $f_k =
\left\llangle b, u_k\right\rrangle$, where $u_k$ satisfies
\eqref{eqn:tstepping}.  Similarly, we define the measurements for our
reduced-order recursion in \eqref{eqn:tsteppingd} by 
\begin{equation*}
    f_k^{(n)} \equiv \ltrn{\la{b}_n}{\la{\varsigma}_k}
        = \la{b}_n^T T_k\left(\la{P}_n\right)\la{b}_n \eqfor k =
        0,\ldots,2n-1.
\end{equation*}
Then, according to \eqref{eqn:match-1}, we have $f_k^{(n)} = f_k$ for $k =
0,\ldots,2n-1$, i.e., our reduced-order model matches the data exactly.

We conclude this section with the following lemma, which states that the
reduced-order model matrix $\la{P}_n$ is in fact the projection of $P$
onto the space spanned by the (orthogonalized) snapshots.
\begin{lemma}\label{lem:projection}
    The reduced-order model Jacobi matrix $\la{P}_n$, constructed via
    Algorithm~\ref{alg:Lanczos}, and the vector $\la{b}_n =
    \sqrt{c}\la{e}_1$ are (up to scaling for $\la{b}_n$) the orthogonal
    projections of $P$ and $b$, respectively, onto the Krylov subspace
    \[
        \mathcal{K}_n(u_0,P) = \mathrm{span}\{u_0,\ldots,u_{n-1}\} =
            \mathrm{span}\{\ou_1,\ldots,\ou_n\},
    \]
    i.e., $\la{P}_n = V^*PV$ and
    $\la{b}_n = \frac{1}{\sqrt{c}}V^*b$.
\end{lemma}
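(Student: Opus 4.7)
The plan is to exploit the spectral-measure isometry between the Krylov subspace $\mathcal{K}_n(u_0,P)$ equipped with $\llangle\cdot,\cdot\rrangle$ and the polynomial space equipped with the measure $\eta_0$, and then match the Lanczos recursion in the polynomial picture with the projection $V^*PV$ in the operator picture.

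First, I would establish the key identity $\llangle p(P)b,\, q(P)b\rrangle = \int_{-1}^1 p(\mu)q(\mu)\eta_0(\mu)\,d\mu$ for polynomials $p,q$ of degree less than $n$. This follows from the data identity $f_k = \llangle b, T_k(P)b\rrangle$ in \eqref{eqn:f_k}, the spectral representation $f_k = \int T_k(\mu)\eta_0(\mu)\,d\mu$ in \eqref{eqn:data}, polarization, and the Chebyshev product rule $2T_iT_k = T_{i+k}+T_{|i-k|}$ to reduce arbitrary polynomial pairings to Chebyshev moments. Consequently the map $\Psi : p \mapsto p(P)b$ is an isometry (up to the constant $c$) from $(\mathbb{R}_{n-1}[\mu], \langle\cdot,\cdot\rangle_{\eta_0/c})$ onto $(\mathcal{K}_n(u_0,P), \llangle\cdot,\cdot\rrangle)$ after the factor of $\sqrt{c}$ is absorbed.

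Under this correspondence, Gram--Schmidt orthonormalization of the snapshot columns $u_k = T_k(P)b$ of $U$ pulls back to Gram--Schmidt on the Chebyshev polynomials $T_0,T_1,\ldots,T_{n-1}$ with respect to $\langle\cdot,\cdot\rangle_{\eta_0/c}$. The latter process is precisely Algorithm~\ref{alg:Lanczos}: because the Gaussian quadrature $\langle\cdot,\cdot\rangle_{y,\theta}$ is exact on polynomials of degree at most $2n-2$, it agrees with $\langle\cdot,\cdot\rangle_{\eta_0/c}$ on every iterate that Algorithm~\ref{alg:Lanczos} produces. Hence $V\la{e}_j = q_j(P)b/\sqrt{c}$, where $\{q_j\}$ is the orthonormal polynomial system generated by Algorithm~\ref{alg:Lanczos}. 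Since $q_1\equiv 1$, this gives $V\la{e}_1 = b/\sqrt{c}$, and orthonormality of $V$ in $\llangle\cdot,\cdot\rrangle$ yields $V^*b = \sqrt{c}\,\la{e}_1$, which is exactly (up to the stated scaling) the identification of $\la{b}_n$ with $V^*b$.

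Finally, to obtain $\la{P}_n = V^*PV$, apply $P$ columnwise: $PV\la{e}_j = (\mu q_j)(P)b/\sqrt{c}$. The three-term recurrence implicit in Algorithm~\ref{alg:Lanczos}, $\mu q_j = \beta_{j-1}q_{j-1}+\alpha_j q_j+\beta_j q_{j+1}$, transports to $PV\la{e}_j = \beta_{j-1}V\la{e}_{j-1}+\alpha_j V\la{e}_j+\beta_j V\la{e}_{j+1}$ for $j<n$; for $j=n$ the $q_{n+1}$ contribution lives in $\mathcal{K}_n(u_0,P)^\perp$ and is annihilated by $V^*$. Therefore $V^*PV$ has exactly the symmetric tridiagonal structure of $\la{P}_n$ in \eqref{eqn:P_n_def}, with matching $\alpha_j,\beta_j$. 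The main obstacle is the first step: making the spectral identity $\llangle p(P)b, q(P)b\rrangle = \int p\,q\,\eta_0\,d\mu$ rigorous when $b = v(0)\tq(A)^{1/2}\delta(x+0)$ is distributional and $\eta_0$ incorporates the multi-branch sum of \eqref{eqn:eta}. Once that identity is in hand, everything else is the bookkeeping of orthogonal-polynomial recurrences.
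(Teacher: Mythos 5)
Your argument is correct, but it takes a genuinely different route from the paper's. The paper first reads off $\la{T}_n \equiv V^*PV$ directly from the matrix form of the Lanczos recursion of Lemma~\ref{lem:u_Lanczos} (equations \eqref{eqn:xi_Lanczos}--\eqref{eqn:PtoTn}), and then identifies $\la{T}_n$ with $\la{P}_n$ by comparing spectral data: it introduces the Ritz basis $Z = U\la{\Phi}$, writes $V = Z\la{Q}_n^T$ for an orthogonal $\la{Q}_n$, computes that $\la{T}_n$ has eigenvalues $\theta_j$ and squared first eigenvector components $y_j^2/c$, and then invokes the uniqueness of the solution of the Jacobi inverse eigenvalue problem. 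You instead work through the moment-matching isometry $p \mapsto p(P)b$ between polynomials under $\langle\cdot,\cdot\rangle_{\eta_0/c}$ and the Krylov subspace under $\llangle\cdot,\cdot\rrangle$, identify the columns of $V$ with the orthonormal polynomials $q_j$ of Algorithm~\ref{alg:Lanczos}, and read $V^*PV=\la{P}_n$ off the three-term recurrence. Your route avoids the inverse-eigenvalue-problem uniqueness theorem and the Ritz intermediary, at the price of having to justify the identification $V\la{e}_j = q_j(P)b/\sqrt{c}$ --- which is essentially the content the paper defers to Remark~\ref{rem:polynomials} and proves by the same quadrature-exactness argument you invoke. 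Two small points: first, the ``main obstacle'' you flag is not an obstacle within the paper's framework, since the identity $\llangle p(P)b, q(P)b\rrangle = \int pq\,\eta_0\,d\mu$ follows from \eqref{eqn:f_k}, \eqref{eqn:data}, and the Chebyshev product rule exactly as in the paper's proof of Lemma~\ref{lem:TplusH}, with no further distributional care required; second, your degree bookkeeping is slightly too conservative, because for $\alpha_n=\langle q_n,\mu q_n\rangle$ the integrand has degree $2n-1$, so you need the quadrature and the spectral identity to be exact up to degree $2n-1$ rather than $2n-2$ --- both in fact are, since the quadrature matches all of $f_0,\ldots,f_{2n-1}$.
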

\begin{proof}
    The Lanczos algorithm we use to orthogonalize the snapshots, given
    in Lemma~\ref{lem:u_Lanczos}, may be written as
    \begin{equation}\label{eqn:xi_Lanczos}
        \xi(P)V = V\xi\left(\la{T}_n\right) +
        b_{n+1}^u\vartheta_{n+1}\la{e}_n^T,
    \end{equation}
    where $V \equiv \left[\vartheta_1(x), \ldots, \vartheta_n(x)\right]$
    (we have transformed the normalized, orthogonalized snapshots
    $\vartheta_j$ to spatial coordinates $x$) satisfies $V^*V =
    \la{I}_{n\times n}$, $\vartheta_{n+1}$ is orthogonal to
    $\vartheta_j$ for $j = 1,\ldots,n$, and the Jacobi matrix
    \begin{equation}\label{eqn:xi_Tn}
        \xi\left(\la{T}_n\right) = 
        \begin{bmatrix}
            a_1^u & b_1^u  &           &           \\
            b_1^u & a_2^u  & \ddots    &           \\
                  & \ddots & \ddots    & b_{n-1}^u \\
                  &        & b_{n-1}^i & a_n^u     \\
        \end{bmatrix}.
    \end{equation}
    Using \eqref{eqn:xi}, \eqref{eqn:xi_Lanczos} may be rewritten as
    \begin{equation}\label{eqn:P_Lanczos}
        PV = V\la{T}_n 
            + \frac{\tau^2}{2}b_{n+1}^u\vartheta_{n+1}\la{e}_n^T;
    \end{equation}
    $\la{T}_n$ is also a Jacobi matrix, since $\la{T}_n =
    \la{I}_{n\times n} + \frac{\tau^2}{2}\xi\left(\la{T}_n\right)$.
    From \eqref{eqn:P_Lanczos}, we have
    \begin{equation}\label{eqn:PtoTn}
        \la{T}_n = V^*PV,
    \end{equation}
    i.e., $\la{T}_n$ is the projection of $P$ onto
    $\mathcal{K}_n(u_0,P)$.  Thus our goal is to show $\la{T}_n =
    \la{P}_n$.  
    
    The columns of the matrix $Z = U\la{\Phi}$, defined in
    \eqref{eqn:ritz}, form an orthonormal basis of
    $\mathcal{K}_n(u_0,P)$ --- they span $\mathcal{K}_n(u_0,P)$ since
    the columns of $U$ span $\mathcal{K}_n(u_0,P)$ and $\la{\Phi}$ is
    nonsingular, and they are mutually orthogonal since, by
    Lemma~\ref{lem:specH},
    \[
        Z^*Z = \la{\Phi}^TU^*U\la{\Phi} = \la{I}_{n\times n}.
    \]
    Moreover, from \eqref{eqn:H} and \eqref{eqn:generaleig} we have
    \begin{equation}\label{eqn:ZtoTheta}
        Z^*PZ = \la{\Phi}^TU^*PU\la{\Phi}
            = \la{\Phi}^TU^*U\la{H}\la{\Phi}
            = \la{\Phi}^TU^*U\la{\Phi}\la{\Theta}
                \la{\Phi}^TU^*U\la{\Phi}
            = \la{\Theta}.
    \end{equation}
    
    Now, since the columns of $V$ and $Z$ both form orthonormal bases of
    the Krylov subspace $\mathcal{K}_n(u_0,P)$, there is an orthogonal
    matrix $\la{Q}^T_n \in \mathbb{R}^{n\times n}$ such that
    \begin{equation}\label{eqn:ZtoV}
        V = Z\la{Q}^T_n.
    \end{equation}
    Then \eqref{eqn:PtoTn}--\eqref{eqn:ZtoV} imply
    \begin{equation}\label{eqn:Tneigen}
        \la{T}_n 
            = V^*PV 
            = \la{Q}_nZ^*PZ\la{Q}^T_n = \la{Q}_n\la{\Theta}\la{Q}^T_n;
    \end{equation}
    because the $\theta_j$ are distinct (by
    Lemma~\ref{lem:points_of_increase}), \eqref{eqn:Tneigen} is the
    unique unitary eigendecomposition of $\la{T}_n$.  In particular, the
    eigenpairs of $\la{T}_n$ are $\left(\theta_j,
    \la{Q}_n\la{e}_j\right)$ for $j = 1,\ldots,n$.  By \eqref{eqn:ZtoV}
    and \eqref{eqn:quadrature}--\eqref{eqn:wn}, the squared first
    components of the eigenvectors of $\la{T}_n$ are
    \begin{multline*}
        \left(\la{e}_1^T\la{Q}_n\la{e}_j\right)^2
            = \left(\la{e}_1^TV^*Z\la{e}_j\right)^2
            = \left[\left(V\la{e}_1\right)^*U\la{\Phi}\la{e}_j\right]^2
            \\
            = \left[\left(\frac{1}{\sqrt{c}}U\la{e}_1\right)^*U
                \la{\Phi}\la{e}_j\right]^2
            = \frac{1}{c}\left(\la{\chi}^T\la{e}_j\right)^2
            = \frac{y_j^2}{c}.
    \end{multline*}
    Recalling \eqref{eqn:P_n_eig}--\eqref{eqn:X_j_1}, we find that the
    eigenvalues and squared first components of the normalized
    eigenvectors of the Jacobi matrices $\la{T}_n$ and $\la{P}_n$ are
    the same.  Therefore, by the uniqueness of the solution to the
    Jacobi inverse eigenvalue problem (see, e.g., the survey article
    \cite{Boley:1987:SMI} by Boley and Golub and references therein),
    $\la{T}_n = \la{P}_n$; i.e., $\la{P}_n = V^*PV$ is the orthogonal
    projection of $P$ onto $\mathcal{K}_n(u_0,P)$.  
    
    Finally, since the columns of $V$ are orthogonal and the first
    column of $V$ is $b$ (see Algorithm~\ref{alg:algorithm_2}), we have,
    by \eqref{eqn:match-1}--\eqref{eqn:c_again}, 
    \[
        V^*b = b^*b\la{e}_1 = c\la{e}_1 = \sqrt{c}\la{b}_n.
    \]
\end{proof}
\begin{remark}\label{rem:QR}
    The result of Lemma~\ref{lem:projection} suggests the following
    alternative method for computing the reduced-order model $\la{P}_n$.
    Proposition~\ref{prop:GS} implies the matrix $V$ may be constructed
    via Gram--Schmidt orthogonalization; this results in the
    factorization $U = V\bm{\mathcal{R}}$, where $\bm{\mathcal{R}} \in
    \mathbb{R}^{n\times n}$ is an invertible, upper-triangular matrix.
    The matrix $\bm{\mathcal{R}}$ may be computed via a Cholesky
    factorization of the known, symmetric, positive-definite matrix
    $U^*U$ because
    \[
        U^*U = \bm{\mathcal{R}}^TV^*V\bm{\mathcal{R}} =
        \bm{\mathcal{R}}^T\bm{\mathcal{R}}.
    \]
    Then, by Lemma~\ref{lem:projection}, 
    \[
        U^*PU = \bm{\mathcal{R}}^TV^*PV\bm{\mathcal{R}} =
        \bm{\mathcal{R}}^T\la{P}_n\bm{\mathcal{R}}, 
    \]
    from which we obtain
    \[
        \la{P}_n = \bm{\mathcal{R}}^{-T}(U^*PU)\bm{\mathcal{R}}^{-1}.
    \]
    One may also obtain $\la{P}_n$ directly from $\la{H} =
    (U^*U)^{-1}(U^*PU)$ via $\la{P}_n =
    \bm{\mathcal{R}}\la{H}\bm{\mathcal{R}}^{-1}$.
\end{remark}
\begin{remark}\label{rem:GS_causality}
    We emphasize that the Gram--Schmidt procedure used to orthogonalize
    the snapshots respects causality, since each successive snapshot is
    orthogonalized only with respect to the previous snapshots.  The
    importance of this from a physical perspective cannot be
    understated, since the time-domain solutions of the wave equation
    are causal --- all of the linear algebraic
    tools we employ must respect this causality.
\end{remark}


\subsection{Galerkin approximation and algorithm to compute
$\ghat_j$, $\g_j$}\label{subsec:Galerkin}

In the previous section, we computed the entries of the matrix
$\la{P}_n$, namely $\alpha_j$ ($j = 1,\ldots,n$) and $\beta_j$ ($j =
1,\ldots,n-1$), from the data.  Now we want to convert the set of
$\alpha_j$ and $\beta_j$ to $\ghat_j$ and $\g_j$, since $\ghat_j$ and
$\g_j$ are localized averages of the velocity and thus give us direct
information about the unknown velocity.  Although this may be done via
the formulas from Lemma~\ref{lem:u_Lanczos} (after transforming the
$\alpha_j$, $\beta_j$ to $a_j^u$, $b_j^u$ using \eqref{eqn:xi_Tn}), we
prefer the algorithm derived here as it gives deeper insight into the
relationship between the discrete ROM and the continuous problem. In
particular, we use renormalized versions of the orthogonalized snapshots
$\ou_j$, $\ow_j$ as the test and trial functions for a Galerkin method
for the system \eqref{eqn:tstepping1st}.  The coefficients of the
Galerkin method satisfy a finite-difference recursion, and the
eigenvalue problem for this recursion leads to an algorithm that
computes $\ghat_j$ and $\g_j$.  For the remainder of this section, we
assume that eigenvectors of symmetric matrices are normalized to have
Euclidean norm $1$.  

We begin by considering the following Galerkin approximation to
$\tu_k$ and $\tw_k$:  
\begin{equation}\label{eqn:Galerkin}
    \ukn \equiv \ds\sum_{j=1}^n \tmu_{j,k}\ghat_j\ou_j \eqand{and}
    \wkn \equiv \ds\sum_{j=1}^n \tomega_{j,k}\g_j\ow_j \eqfor 
    k = 0,\ldots,2n-1.
\end{equation}
We define $\Skn \equiv [\tmu_{1,k},
\tomega_{1,k}, \tmu_{2,k}, \tomega_{2,k}, \ldots, \tmu_{n,k}, 
\tomega_{n,k}]^T$.  Then
\[
    \myvectorw{\ukn}{\wkn} = \cQ\la{\Gamma}\Skn,
\]
where $\cQ$ is defined in \eqref{eqn:oQ} and $\la{\Gamma}$ is defined in
\eqref{eqn:OGamma}.  In combination with \eqref{eqn:dtau}, a calculation
shows that 
\begin{equation}\label{eqn:dtau_commutes}
    \dtau\cQ\la{\Gamma}\Skn = \cQ\la{\Gamma}\dtau^S\Skn,
\end{equation}
where 
\begin{equation}\label{eqn:dtau_commutes_long}
    \dtau^S\Skn \equiv \dfrac{1}{\tau} 
    \begin{bmatrix}
        \tmu_{1,k+1}-\tmu_{1,k} \\
        \tomega_{1,k}-\tomega_{1,k-1} \\
        \vdots \\ 
        \tmu_{n,k+1}-\tmu_{n,k} \\
        \tomega_{n,k}-\tomega_{n,k-1}
    \end{bmatrix}.
\end{equation}

Recall that $\tu_k$ and $\tw_k$ are the solutions of
\eqref{eqn:leapfrog_matrix}.  Substituting $\ukn$ and $\wkn$ into
\eqref{eqn:leapfrog_matrix} and requiring the resulting equation to be
orthogonal to the columns of $\cQ\la{\Gamma}$ with respect to the inner
product $\ootviptv{\cdot}{\cdot}$ gives the Galerkin method
\[
    \la{\Gamma}\cQ^*\left(\cL\cQ\la{\Gamma}\Skn - 
        \dtau \cQ\la{\Gamma}\Skn\right) = 0.
\]
Then \eqref{eqn:crux_alg_2} (i.e., Algorithm~\ref{alg:algorithm_2}),
\eqref{eqn:bcT}, and \eqref{eqn:dtau_commutes}
imply this is equivalent to 
\[
    \la{\Gamma}\cQ^*\left(\cQ\bcO\la{\Gamma}^{-1} + 
    \frac{1}{\g_n}\oU_{2n+1}\la{e}_{2n}^T\right)\la{\Gamma}\Skn
    - \la{\Gamma}\cQ^*\cQ\la{\Gamma}\partial_{\tau}^S\Skn = 0.
\]
Finally, Algorithm~\ref{alg:algorithm_2} implies $\cQ^*\cQ =
\la{\Gamma}^{-1}$, so the above equation is equivalent to
\begin{equation}\label{eqn:Galerkin_matrix}
    \la{\Gamma}^{-1}\bcO\Skn - \dtau^S\Skn = 0 \eqfor k =
    0,\ldots,2n-1.
\end{equation}
The Galerkin method \eqref{eqn:Galerkin_matrix} is equivalent to the
following finite-difference scheme for the spectral coefficients
$\tmu_{j,k}$, $\tomega_{j,k}$: 
\begin{equation}\label{eqn:Galerkin_recursion}
    \begin{cases}
        \begin{aligned}
            \dfrac{\tmu_{j,k+1}-\tmu_{j,k}}{\tau}
		&= \dfrac{\tomega_{j-1,k}-\tomega_{j,k}}{\ghat_j} \\
            \dfrac{\tomega_{j,k}-\tomega_{j,k-1}}{\tau}
		&= \dfrac{\tmu_{j,k}-\tmu_{j+1,k}}{\g_j}
	\end{aligned}
            &\text{for } j = 1,\ldots,n, \ k = 0,\ldots,2n-1, \\
	\tmu_{n+1,k} = 0, \quad \tomega_{0,k} = 0, \\
        \tmu_{j,0} = \ghat_1^{-1}\delta_{j1}, \quad
	    \tomega_{j,0}+\tomega_{j,-1} = 0.
    \end{cases}
\end{equation}
The boundary conditions $\tmu_{n+1,k} = 0$ and $\tomega_{0,k} = 0$ are
enforced to ensure that the recursions in
\eqref{eqn:Galerkin_recursion} are equivalent to
\eqref{eqn:Galerkin_matrix} for $j = n$ and $j = 1$, respectively.  The
initial conditions $\tmu_{j,0} = \ghat_1^{-1}\delta_{j1}$ and
$\tomega_{j,0} + \tomega_{j,-1} = 0$ are the projections of the
corresponding initial conditions from \eqref{eqn:tstepping1st}: for $i =
1,\ldots,n$ we require
\[
    \ootv{\tu_0^{(n)} - \tu_0}{\ghat_i\ou_i} = 0 
    \Leftrightarrow 
    \ds\sum_{j=1}^n \tmu_{j,0}\ghat_j\ghat_i\ootv{\ou_j}{\ou_i} - 
        \delta_{i1}=0
    \Leftrightarrow
    \tmu_{j,0} = \ghat_1^{-1}\delta_{j1}
\]
and
\[
    \iptv{\left(\tw_0^{(n)}+\tw_{-1}^{(n)}\right)}{\g_i\ow_i} = 0
    \Leftrightarrow 
    \tomega_{j,0}+\tomega_{j,-1} = 0.
\]
Because $\mathrm{span}\{\tu_0,\ldots,\tu_{n-1}\} =
\mathrm{span}\{\ou_1,\ldots,\ou_n\}$, we have $\ukn = \tu_k$ for $k =
0,\ldots,n-1$; similarly, $\wkn = \tw_k$ for $k = 0,\ldots,n-1$.  

We will now derive an algorithm for computing $\ghat_j$, $\g_j$ that is
based on the eigenproblem for the recursion
\eqref{eqn:Galerkin_recursion}.  First, note \eqref{eqn:f_k}
implies 
\begin{equation}\label{eqn:ghat_1}
    \ghat_1^{-1} = \ootv{\ou_1}{\ou_1} = \left\llangle u_0,
    u_0 \right\rrangle = \left\llangle b, b \right\rrangle = f_0 = c,
\end{equation}
where $c$ is defined in \eqref{eqn:c} (and, hence, is known from our
measurements).  Next, we define $\ti{\la{\mu}}_k \equiv
[\tmu_{1,k},\ldots,\tmu_{n,k}]^T$.  We eliminate $\tomega_{j,k}$ from
the recursion \eqref{eqn:Galerkin_recursion} to find that
$\ti{\la{\mu}}_k$ satisfies the second-order recursion
\begin{equation}\label{eqn:mut_second_order}
    \dfrac{\ti{\la{\mu}}_{k+1} - 2\ti{\la{\mu}}_k +\ti{\la{\mu}}_{k-1}}
        {\tau^2} 
    = \la{M}\ti{\la{\mu}}_k \eqfor k = 0,\ldots,2n-1, \qquad
    \ti{\la{\mu}}_0 = \ghat_1^{-1}\la{e}_1, \ \ti{\la{\mu}}_{-1} =
        \ti{\la{\mu}}_1,
\end{equation}
where $\bM \equiv \bDhat^{-1}\bG$, $\bDhat \equiv
\diag(\ghat_1,\ldots,\ghat_n)$, and $\la{G}\in\mathbb{R}^{n\times n}$ is
the Jacobi matrix defined by
\begin{equation*}
    \bG \equiv 
    \begin{bmatrix}
        -\g_1^{-1} & \g_1^{-1} & & \myspace
	\g_1^{-1}  & -\left(\g_1^{-1} + \g_2^{-1}\right) & \ddots &
	    \myspace
	& \ddots & \ddots & \g_{n-1}^{-1} \myspace
	& & \g_{n-1}^{-1} & -\left(\g_{n-1}^{-1} + \g_{n}^{-1}\right)
    \end{bmatrix}.
\end{equation*}
The boundary conditions that are implicit in the definition of $\bM$
(which follow from \eqref{eqn:Galerkin_recursion}) are
\begin{equation}\label{eqn:mu_bdy}
    \tmu_{n+1,k} = 0 
    \eqand{and} 
    \frac{\tmu_{0,k} - \tmu_{1,k}}{\g_0} = 0.
\end{equation}
\begin{remark}\label{rem:optimal_grid}
    The recursion \eqref{eqn:mut_second_order}--\eqref{eqn:mu_bdy} may
    also be viewed as a centered-difference discretization of
    \eqref{eqn:cauchyslow} on a staggered grid with $\ti{\mu}_{j,k} =
    \tu\left(\tx^j\right)$, $\ghat_j = \frac{\tv^j}{\ha{h}^j}$, and
    $\g_j = \ha{v}^j\ti{h}^j$ (see \S~\ref{subsec:leapfrog} for
    more details, in particular \eqref{eqn:discrete_tA}); this matches
    the optimal grid discretization utilized in \cite[equation
    (2.8)]{Borcea:2002:OFD} (with $\sigma$ in that paper replaced by
    $1/\tv$).
\end{remark}

Although $\bM$ is not symmetric, it is self adjoint and negative
definite with respect to the inner product $\ipgh{\cdot}{\cdot}$, where 
\[
    \ipgh{\la{x}}{\la{z}} \equiv \la{x}^T\bDhat\la{z} = 
        \ds\sum_{i=1}^n x_i z_i \ghat_i, \qquad \la{x}, \la{z} \in
        \mathbb{R}^n.
\]
In particular, we may symmetrize $\bM$ as follows:
\begin{equation}\label{eqn:tM}
    \bMt \equiv \bDhat^{1/2}\bM\bDhat^{-1/2} =
        \bDhat^{-1/2}\bG\bDhat^{-1/2} = \bMt^T.
\end{equation}
We make the change of variables $\ti{\la{\varsigma}}_k \equiv
\ha{\la{D}}^{1/2}\ti{\la{\mu}}_k$ in the recursion
\eqref{eqn:mut_second_order} to find $\ti{\la{\varsigma}}_k$ satisfies
\begin{equation}\label{eqn:ti_varsigma}
    \frac{\ti{\la{\varsigma}}_{k+1} - \ti{\la{\varsigma}}_k 
        + \ti{\la{\varsigma}}_{k-1}}{\tau^2} = \bMt\ti{\la{\varsigma}}_k
        \eqfor k = 0,\ldots,2n-1, \qquad \ti{\la{\varsigma}}_0 =
        \ghat_1^{-1/2}\la{e}_1 = \la{b}_n, \ \ti{\la{\varsigma}}_{-1} =
        \ti{\la{\varsigma}}_1,
\end{equation}
where $\la{b}_n$ is defined in \eqref{eqn:tsteppingd}.  We now prove
$\bMt = \xi\left(\la{P}_n\right)$, i.e., we prove
\eqref{eqn:ti_varsigma} and \eqref{eqn:tsteppingd} are equivalent.

The primary Galerkin approximation from \eqref{eqn:Galerkin} may be
written
\begin{equation*}
    \ti{u}_k^{(n)} = V\ti{\la{\varsigma}}_k,
\end{equation*}
where $V = \left[\vartheta_1,\ldots,\vartheta_n\right] =
\left[\ou_1,\ldots,\ou_n\right]\ha{\la{D}}^{1/2}$ is constructed via the
Lanczos algorithm in Lemma~\ref{lem:u_Lanczos}.  Applying the Galerkin
method to \eqref{eqn:primary_recursion} (by inserting $\tu_k^{(n)} =
V\ti{\la{\varsigma}}_k$ into \eqref{eqn:primary_recursion} and
multiplying on the left by $V^*$), we find $\ti{\la{\varsigma}}_k$ also
satisfies the recursion \eqref{eqn:ti_varsigma} with $\bMt =
V^*\xi\left(\ti{P}\right)V = \xi\left(\la{P}_n\right)$ by
Lemma~\ref{lem:projection}.  Thus $\ghat_j$, $\g_j$ may be computed by
comparing $\bMt$ and $\xi\left(\la{P}_n\right)$, the latter of which is
known.  In particular, recalling \eqref{eqn:xi}, \eqref{eqn:P_n_def},
and \eqref{eqn:tM}, we find $\ghat_1 = c^{-1}$ (from
\eqref{eqn:ghat_1}), $\g_1 =
\left[\frac{2}{\tau^2}\left(1-\alpha_1\right)\ghat_1\right]^{-1}$, 
\[
    \ghat_j = \dfrac{\tau^4}{4\beta_{j-1}^2\ghat_{j-1}\g_{j-1}^2},
    \eqand{and} 
    \g_j = \left[\dfrac{2}{\tau^2}\left(1-\alpha_j\right)\ghat_j -
        \dfrac{1}{\g_{j-1}}\right]^{-1} \eqfor j = 2,\ldots,n.
\]

We now present an alternative (equivalent) algorithm for computing
$\ghat_j$, $\g_j$.  This algorithm is a simplification and
beautification of the Lanczos algorithm we have not seen in the
literature, and we utilize a matrix version of it
(Algorithm~\ref{alg:algorithm_1_2D}) for multidimensional problems.  In
the interest of space, we defer its derivation to
\S~\ref{subsec:algorithm_1_derivation} in the appendix.
\begin{algorithm}[H]
    \caption{Computation of $\ghat_j$, $\g_j$}
	\label{alg:algorithm_1}
	\begin{algorithmic}
        \Require $y_l$, $\lambda_l = -\xi(\theta_l)$ for $l =
            1,\ldots,n$ 
	    \Ensure $\ghat_j$, $\g_j$, $j = 1,\ldots,n$
	    \State Set $\obomega_0 = \la{0}$ and
	        $\obmu_1 = \sqrt{0.5}\cdot\left[y_1, y_1, 
		y_2, y_2, \ldots, y_n, y_n 
		\right]^T$.
	    \For{$j = 1,\ldots,n$}
            \begin{enumerate}
                \item 
		    $\ghat_j = \dfrac{1}{\normltrtn{\obmu_j}^2} = 
		        \dfrac{1}{\ds\sum_{i=1}^{2n} \left(\la{e}_i^T
		        \obmu_j\right)^2}$;
                \item 
		    $\obomega_j = \obomega_{j-1} + \ghat_j
		        \la{L}\obmu_j$;
	        \item 
		    $\g_j = \dfrac{1}{\normltrtn{\obomega_j}^2} = 
		        \dfrac{1}{\ds\sum_{i=1}^{2n}
		        \left(\la{e}_i^T\obomega_j\right)^2}$;
                \item 
		    $\obmu_{j+1} = \obmu_{j} - \g_j\la{L}\obomega_j$.
	    \end{enumerate}    
	    \EndFor
	\end{algorithmic}
\end{algorithm}


\section{Inversion algorithm}\label{sec:inversion}

Algorithm~\ref{alg:algorithm_2} (and, equivalently, the Galerkin scheme
from \S~\ref{subsec:Galerkin}) yields the averaging formulas 
\begin{equation}\label{eqn:gammas_again}
    \ghat_j = \frac{1}{\int_0^{\txmax} \left(\ou_j\right)^2
        \frac{1}{\tv} \di{\tx}} 
    \eqand{and}
    \g_j = \frac{1}{\int_0^{\txmax} \left(\ow_j\right)^2 
        \tv \di{\tx}}.
\end{equation}
Lemmas~\ref{lem:u_Lanczos} and \ref{lem:w_Lanczos} imply that the weight
functions $\ou_j$ and $\ow_j$ (up to normalization factors) can be
computed via the Lanczos process with the operators
$\xi\left(\ti{P}\right)$ and $\xi\left(\ti{P}_C\right)$, respectively,
and localized initial conditions.

The proposition below states that the orthogonalized snapshots $\ou_j$
and $\ow_j$ may be equivalently computed via Gram--Schmidt
orthogonalization of the snapshots $\tu_k$ and $\tw_k$, respectively.
One of the well-known interpretations of the
Marchenko--Krein--Gel'fand--Levitan (MKGL) method is that it is a
probing via Gram--Schmidt orthogonalization of the triangular matrix of
the snapshots (the matrix $U$ defined in \eqref{eqn:umatrix})
\cite{Newton:1974:GLM}.  Assuming that $u_0 = b$ is an approximation of
a delta function, due to causality the snapshot matrix $U$ will be an
approximation to a triangular matrix; after Gram--Schmidt
orthogonalization, the orthogonalized snapshots $\ou_j$ and $\ow_j$ will
be localized functions.  This is a result of the fact from linear
algebra that the $QR$-factorization of a full-rank, upper triangular
matrix $\la{U}$ has $\la{Q} = \la{I}$, where $\la{I}$ is the identity
matrix (the rectangular identity matrix if $\la{U}$ is rectangular with
more rows than columns). The proof of the proposition is given in the
appendix.
\begin{proposition}\label{prop:GS}
    Suppose the orthogonalized snapshots $\ou_j$ and $\ow_j$ are
    obtained via Algorithm~\ref{alg:algorithm_2}.  Let $\ougs_j$ denote
    the $j\textsuperscript{th}$ orthogonalized snapshot obtained via the
    Gram--Schmidt algorithm, i.e., 
    \[
        \ougs_j = \tu_{j-1} - \ds\sum_{i=1}^{j-1} c_{ij}^u\ougs_i, 
	\eqand{where}
	c_{ij}^u \equiv \ootv{\tu_{j-1}}{\dfrac{\ougs_i}
	    {\normootv{\ougs_i}}}\dfrac{1}{\normootv{\ougs_i}}.
    \]
    Then $\ougs_j = (d_j^u)^{-1}\ou_j$, where
    \[
        d_j^u \equiv \dfrac{1}{1 - \ds\sum_{i=1}^{j-1} \ghat_i
	\ootv{\tu_{j-1}}{\ou_i}}.
    \]
    Similarly, let $\owgs_j$ denote the $j\textsuperscript{th}$
    orthogonalized snapshot obtained via the Gram--Schmidt algorithm, so 
    \begin{equation}\label{eqn:w_GS}
        \owgs_j = \tw_{j-1} - \ds\sum_{i=1}^{j-1} c_{ij}^w\owgs_i,
	\eqand{where}
	c_{ij}^w \equiv	\iptv{\tw_{j-1}}{\dfrac{\owgs_i}
	    {\normiptv{\owgs_i}}}\dfrac{1}{\normiptv{\owgs_i}}.
    \end{equation}
    Then $\owgs_j = (d_j^w)^{-1}\ow_j$, where
    \[
        d_j^w \equiv \dfrac{\ds\sum_{i=1}^j \ghat_i}
	{(2j-1)\dfrac{\tau}{2} - \ds\sum_{i=1}^{j-1}
	\left(\g_i\iptv{\tw_{j-1}}{\ow_i}\ds\sum_{k=1}^i\ghat_k\right)}.
    \]
\end{proposition}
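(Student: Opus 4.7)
The plan is to exploit the polynomial (Lanczos) representation of the orthogonalized snapshots from Lemma~\ref{lem:poly_alg_2}, combined with the well-known normalization values of the Chebyshev polynomials at $x = 1$, to pin down the coefficients of $\tu_{j-1}$ and $\tw_{j-1}$ in the orthogonal bases $\{\ou_i\}$ and $\{\ow_i\}$.

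First I would treat the primary case. Since Gram--Schmidt on $\tu_0, \ldots, \tu_{j-1}$ produces $\ougs_j$ orthogonal to $\mathrm{span}\{\ougs_1,\ldots,\ougs_{j-1}\} = \mathrm{span}\{\ou_1,\ldots,\ou_{j-1}\}$ (using Proposition~\ref{prop:alg_2}), and since $\tu_{j-1} \in \ti{\mathcal{K}}_j^u = \mathrm{span}\{\ou_1,\ldots,\ou_j\}$, I conclude $\ougs_j = a_{jj}\ou_j$, where $a_{jj}$ is the coefficient of $\ou_j$ in $\tu_{j-1} = \sum_{i=1}^j a_{ji}\ou_i$. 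Taking $\ootv{\cdot}{\ou_i}$ against both sides gives $a_{ji} = \ghat_i\ootv{\tu_{j-1}}{\ou_i}$. To evaluate $a_{jj}$, I would use Lemma~\ref{lem:snapshots_uw} to write $\tu_{j-1} = T_{j-1}(\ti{P})\tu_0$ and Lemma~\ref{lem:poly_alg_2} to write $\ou_i = q_i^u(\xi(\ti{P}))\tu_0$ with $q_i^u(0) = 1$. Since $\{q_i^u\}_{i=1}^j$ is a basis for polynomials of degree $\le j-1$, the polynomial identity $T_{j-1}(x) = \sum_{i=1}^j a_{ji}\, q_i^u(\xi(x))$ holds and, evaluated at $x=1$ (where $\xi(1)=0$), yields $1 = T_{j-1}(1) = \sum_{i=1}^j a_{ji}$. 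Hence $a_{jj} = 1 - \sum_{i=1}^{j-1}\ghat_i\ootv{\tu_{j-1}}{\ou_i} = (d_j^u)^{-1}$, as claimed.

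For the dual case the structure is identical, and the same reasoning gives $\owgs_j = b_{jj}\ow_j$ with $b_{ji} = \g_i\iptv{\tw_{j-1}}{\ow_i}$. The extra ingredient I need is to relate the generator $\ow_1$ of the dual Krylov subspace to $\tw_0$. Reading the first iteration of Algorithm~\ref{alg:algorithm_2} together with the $k=0$ step of \eqref{eqn:tstepping1st} under the initial condition $\tw_0 + \tw_{-1} = 0$ gives $\frac{1}{\tv}\Lt\tu_0 = \frac{2}{\tau}\tw_0$, so $\ow_1 = \ghat_1 \frac{1}{\tv}\Lt\tu_0 = \frac{2\ghat_1}{\tau}\tw_0$. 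Combining this with Lemma~\ref{lem:snapshots_uw}'s formula $\tw_{j-1} = [\Ts_{j-1}(\ti{P}_C) + \Ts_{j-2}(\ti{P}_C)]\tw_0$ and Lemma~\ref{lem:poly_alg_2}'s expansion $\ow_i = q_i^w(\xi(\ti{P}_C))\ow_1$, I get the polynomial identity
\[
    \frac{\tau}{2\ghat_1}\bigl[\Ts_{j-1}(x) + \Ts_{j-2}(x)\bigr]
    = \sum_{i=1}^{j} b_{ji}\, q_i^w(\xi(x)).
\]
Evaluating at $x=1$ and using the standard values $\Ts_k(1) = k+1$ together with $q_i^w(0) = \frac{1}{\ghat_1}\sum_{k=1}^i\ghat_k$ yields $\sum_{i=1}^j b_{ji}\sum_{k=1}^i \ghat_k = \frac{(2j-1)\tau}{2}$. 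Solving for $b_{jj}$ reproduces exactly $(d_j^w)^{-1}$.

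The conceptual step is short (identifying the Gram--Schmidt remainder with the last basis vector), so the main obstacle is the careful bookkeeping of normalization constants in the dual case: one must correctly translate the mixed-index initial conditions for $\tw$ into the single relation $\ow_1 = (2\ghat_1/\tau)\tw_0$, and then combine $\Ts_k(1)=k+1$ with the nonstandard value $q_i^w(0) = \ghat_1^{-1}\sum_{k\le i}\ghat_k$. Once these two identifications are in hand, both claims follow by the same ``evaluate the polynomial identity at $x=1$'' trick, which is why the proposition has the symmetric form stated.
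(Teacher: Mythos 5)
Your proof is correct and follows essentially the same route as the paper's: both arguments reduce the claim to the polynomial representations of Lemma~\ref{lem:poly_alg_2} together with Lemma~\ref{lem:snapshots_uw}, the relation $\ow_1 = (2\ghat_1/\tau)\tw_0$, and evaluation of the resulting polynomial identity at the point where $\xi$ vanishes (using $T_{j-1}(1)=1$, $\Ts_k(1)=k+1$, and the stated values of $q_j^u(0)$ and $q_j^w(0)$). The only difference is organizational: the paper establishes the proportionality $\owgs_j \propto \ow_j$ by induction on $j$, whereas you obtain it directly by expanding $\tw_{j-1}$ in the orthogonal basis $\{\ow_i\}_{i\le j}$ and identifying the Gram--Schmidt remainder with the top component --- a slight streamlining of the same argument.
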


In addition, in slowness coordinates $\tx$, the orthogonalized snapshots
$\ou_j$ and $\ow_j$ depend weakly on the velocity $\tv$ for small
$\sigma$ (assuming $\tau$ is of the same order as $\sigma$); moreover,
$\ou_j$ and $\ow_j$ are asymptotically proportional to $\tv$ and
$\frac{1}{\tv}$, respectively.  The weak dependence of $\ou_j$ and
$\ow_j$ on $\tv$ and the aforementioned asymptotic behavior of $\ou_j$
and $\ow_j$ can be justified via the Wentzel--Kramers--Brillouin (WKB)
limit.  

We next define a reference velocity that is useful in our inversion
scheme.  
\begin{definition}\label{def:background}
    Let $v^0(x)$ be a (smooth enough) \emph{reference velocity} with
    $v^0(0) = v(0)$.  Then the \emph{reference slowness (traveltime)
    coordinate transformation} is defined by 
    \[ 
        \tx^0(x) \equiv \int_0^x \frac{1}{v^0(x')}\di{x'}.  
    \]  
    The \emph{reference primary} and \emph{dual orthogonalized
    snapshots} $\ou_j^0$ and $\ow_j^0$ and \emph{reference coefficients}
    $\ghat_j^0$ and $\g_j^0$ are computed via
    Algorithm~\ref{alg:algorithm_2} with $\tv$ replaced by $\tv^0$
    (including in the definition of $\tA$).   The reference coefficients
    may be equivalently computed via Algorithm~\ref{alg:algorithm_1}.  
\end{definition}

To see why we require $v^0(0) = v(0)$, note that the PDE in
\eqref{eqn:w2} is equivalent to $g_{xx} - \frac{1}{v^2}g_{tt} =
-v(0)^2\delta(x+0)\delta(t)_t$.  We thus take $v^0(0) = v(0)$ to ensure
that we use the same forcing term for the true and reference velocity
systems.  

Because $\ou_j$ and $\ow_j$ are localized and asymptotically
proportional to $\tv$ and $1/\tv$, respectively,
\eqref{eqn:gammas_again} implies that $\ghat_j$ gives an estimate of
$1/\tv$ near the center of mass of $\ou_j^2$ while $\g_j$ gives an
estimate of $\tv$ near the center of mass of $\ow_j^2$.  Although $\ou_j$
and $\ow_j$ are not known \emph{a priori}, as discussed above they are
weakly dependent on the velocity.  Thus the center of mass of $\ou_j^2$
(respectively, $\ow_j^2$) is well approximated by the center of mass of
$\left(\ou_j^0\right)^2$ (respectively, $\left(\ow_j^0\right)^2$).  

Our inversion algorithm proceeds in two steps.  First, we approximate
the centers of mass of the squared orthogonalized snapshots, for $j =
1,\ldots,n$, by
\begin{equation}\label{eqn:center_of_mass}
        \tx_j^0 \equiv 
        \ghat_j^0\int_0^{\txmax^0} \tx^0
            \left[\ou_j^0\left(\tx^0\right)\right]^2
            \frac{1}{\tv^0\left(\tx^0\right)}\di{\tx^0}, \qquad 
        \ha{\tx}_j^0 \equiv 
        \g_j^0\int_0^{\txmax^0} \tx^0
            \left[\ow_j^0\left(\tx^0\right)\right]^2
            \tv^0\left(\tx^0\right)\di{\tx^0},
\end{equation}
where $\txmax^0 \equiv \tx^0(\xmax)$.  Next, we approximate the velocity
at the preimage of the primary and dual grid points in
\eqref{eqn:center_of_mass} by
\begin{equation}\label{eqn:vel_inv}
    v\left(\tx^{-1}\left(\tx_j^0\right)\right) 
        = \tv\left(\tx_j^0\right) 
        \approx \tv^0\left(\tx_j^0\right)\frac{\ghat_j^0}{\ghat_j}
        \eqand{and} 
    v\left(\tx^{-1}\left(\ha{\tx}_j^0\right)\right)
        = \tv\left(\ha{\tx}_j^0\right)
        \approx \tv^0\left(\ha{\tx}_j^0\right) \frac{\g_j}{\g_j^0}.
\end{equation}
\begin{remark}\label{rem:optimal_grids} 
    Formulas \eqref{eqn:center_of_mass} and \eqref{eqn:vel_inv} will be
    simplified  for $v^0\equiv 1$, in which case $\tx^0=x^0$.  In this
    case, $\ghat_j^0$ and $\g_j^0$ correspond to dual and primary steps,
    respectively, of optimal grids \cite{Borcea:2002:OFD}. That is,
    formulas \eqref{eqn:center_of_mass} and \eqref{eqn:vel_inv} are
    similar to the formulas for optimal grid inversion
    \cite{Borcea:2013:RNA}, except in the latter case $\ha{\tx}^0_j $
    and $\tx^0_j$ are defined as $\sum_{i=1}^j\ghat_i^0$ and
    $\sum_{i=1}^{j-1}\g_i^0$, respectively, for $j=1,\ldots,n$.  When
    $\sigma/\tau$ is close to $\sqrt{2}/4$, these definitions can be
    quite close, but generally they may differ significantly, in which
    case \eqref{eqn:center_of_mass} and \eqref{eqn:vel_inv} will give
    more accurate results than the conventional optimal grid approach.
    One can conjecture that \eqref{eqn:center_of_mass} and
    \eqref{eqn:vel_inv} give a second-order approximation of smooth $v$
    with respect to the width of $\ou_j$ and $\ow_j$, which can be
    measured as $\ghat_j^0$ and $\g_j^0$, respectively.  Generally,
    formulas \eqref{eqn:center_of_mass} and \eqref{eqn:vel_inv} can be
    extended to ``conventional'' optimal grids, in which case we can
    also conjecture that they would produce nodal values very close to
    those of conventional optimal grids \cite{Borcea:2002:OFD}.
\end{remark}

Finally, we may approximately invert the traveltime coordinate
transformation to convert the traveltime grid nodes $\ha{\tx}^0_j$ and
$\tx_j^0$ to physical coordinates.  In particular, since the traveltime
coordinate transformation is given by \eqref{eqn:slowness}, the inverse
traveltime coordinate transformation is
\begin{equation}\label{eqn:inverse_tt}
    \tx^{-1}\left(\tx\right) = \ds\int_0^{\tx} \tv\left(\tx'\right)
    \di{\tx'}.
\end{equation}
Since we only know $\tv$ at the traveltime grid nodes $\ha{\tx}_j^0$ and
$\tx_j^0$, we approximate the above integral via a right-endpoint
Riemann sum.  We obtain the following formulas for the approximate
physical grid nodes, where we take $\tx_0^0 = 0$:
\begin{equation}\label{eqn:physical_grid_nodes}
    \left\{
    \begin{aligned}
        \ha{x}_j^0 &= \ds\sum_{i=1}^j
        \left(\ha{\tx}_i^0-\tx_{i-1}^0\right)
        \tv\left(\ha{\tx}_i^0\right) +
        \ds\sum_{i=1}^{j-1}\left(\tx_i^0-\ha{\tx}_i^0\right)
        \tv\left(\tx_i^0\right) \approx
        \tx^{-1}\left(\ha{\tx}_j^0\right)
        \\
        x_j^0 &= \ds\sum_{i=1}^j
        \left(\ha{\tx}_i^0-\tx_{i-1}^0\right)
        \tv\left(\ha{\tx}_i^0\right) +
        \ds\sum_{i=1}^{j}\left(\tx_i^0-\ha{\tx}_i^0\right)
        \tv\left(\tx_i^0\right) \approx
        \tx^{-1}\left(\tx_j^0\right)
    \end{aligned}
    \right.
    \eqfor j = 1,\ldots,n.
\end{equation}

Our inversion algorithm is summarized in
Algorithm~\ref{alg:1D_inversion}.
\begin{algorithm}[H]
    \caption{1D Inversion Algorithm}
	\label{alg:1D_inversion}
	\begin{algorithmic}
        \Require measured data $f_k (k = 0,\ldots,2n-1)$, reference
        velocity $v^0$
        \Ensure approximations of
            $v\left(\tx^{-1}\left(\ha{\tx}_j^0\right)\right)$ and
            $v\left(\tx^{-1}\left(\tx_j^0\right)\right)$
        \begin{itemize}
            \item[1.] Compute the grid nodes $\ha{\tx}_j^0$ and $\tx_j^0$
                for $j=1,\ldots,n$.
                \begin{itemize}
                    \item[a.] Compute the reference primary and dual
                        snapshots by solving \eqref{eqn:1stc} with $\tv$
                        replaced by $\tv^0$ (including in the traveltime
                        coordinate transformation) using finite
                        differences, for example.  
                    \item[b.] Orthogonalize the reference snapshots via
                        Algorithm~\ref{alg:algorithm_1} to
                        obtain $\ou^0_j$, $\ow^0_j$, $\ghat^0_j$, and
                        $\g^0_j$ for $j = 1,\ldots,n$.
                    \item[c.] Compute the traveltime grid nodes
                        $\ha{\tx}_j^0$ and $\tx_j^0$ from
                        \eqref{eqn:center_of_mass}
                        using the trapezoidal rule, for example. 
                \end{itemize}
            \item[2.] Compute $c = f_0$ and $\theta_j$, $y_j$ ($j =
                1,\ldots,n$) using \eqref{eqn:datamatrix} and
                \eqref{eqn:H}--\eqref{eqn:wn}.
            \item[3.] Compute $\ghat_j$, $\g_j$ ($j = 1,\ldots,n$) via
                Algorithm~\ref{alg:algorithm_1}.  
            \item[4.] Compute the approximation of the velocity on the
                traveltime grid, i.e., $\tv\left(\ha{\tx}_j\right)$ and
                $\tv\left(\tx_j\right)$, from \eqref{eqn:vel_inv}.
            \item[5.] Approximately convert the traveltime grid nodes
                $\ha{\tx}_j^0$ and $\tx_j^0$ to physical grid nodes
                $\ha{x}_j$ and $x_j$ 
                using \eqref{eqn:physical_grid_nodes}.
            \item[6.] Combine the results from steps $4$ and $5$ to obtain
                the estimate of the velocity at the (approximate)
                physical grid nodes, namely 
                $v\left(\ha{x}_j\right) \approx
                \tv\left(\ha{\tx}_j^0\right)$ and 
                $v\left(x_j\right) \approx
                \tv\left(\tx_j^0\right)$.
        \end{itemize}
   	\end{algorithmic}
\end{algorithm}


\section{Numerical experiments}\label{sec:numerics}

We now present some numerical results to illustrate the main ideas of
the paper.  In all of our simulations, we used a uniform reference
velocity given by $v^0(x) \equiv v(0)$.  A comparison of the
performance of 2D reverse time migration (RTM) and a 2D
backprojection method closely related to the method described in this
paper may be found in \cite{Mamonov:2015:NSI}.

In Figure~\ref{fig:test_velocity}(a), we plot the snapshot matrix $U$
defined in \eqref{eqn:umatrix}.  In Figure~\ref{fig:test_velocity}(b),
we plot the orthogonalized snapshots $\ou_j$ constructed using
Algorithm~\ref{alg:algorithm_2}; note the localization of the
orthogonalized snapshots.  In Figures~\ref{fig:test_velocity}(a) and
(b), we have scaled the snapshots so that
$\normootv{\tu_j}=\normootv{\ou_j} = 1$.  The velocity we used in the
simulation is represented by the solid, black line in
Figure~\ref{fig:test_velocity}(c).  We mapped the grid points $\tx_j^0$
and $\ha{\tx}_j^0$ to the spatial grid by approximately inverting the
map $\tx(x)$ via \eqref{eqn:physical_grid_nodes}.  The approximations to
$v\left(\tx^{-1}\left(\tx_j^0\right)\right)$ and
$v\left(\tx^{-1}\left(\ha{\tx}_j^0\right)\right)$ are represented by
blue circles and green squares, respectively.  We chose $\sigma = 0.01$
and $\tau = 2.5\sigma$ for these simulations.  At this point, we do not
have a rigorous method for optimally choosing $\tau$; as mentioned
above, we conjecture that we should choose $\tau$ to be consistent with
the Nyquist--Shannon sampling limit of $\tq$, so $\tau \sim \sigma$.
Below we will see that even certain choices of $\tau \sim \sigma$ lead
to good reconstructions while other choices of $\tau \sim \sigma$ can
lead to very poor reconstructions.  As a measure of the stability of our
algorithm, we computed the condition number of the matrix $U^*U$ (see
\eqref{eqn:umatrix} and \eqref{eqn:H}).  For the above parameters, we
have $\mathrm{cond}(U^*U) \approx 61.76$.  

If $\tau$ is too large, the inversion procedure produces poor results.
Figures~\ref{fig:test_velocity}(d), (e), and (f) are the analogues of
Figures~\ref{fig:test_velocity}(c), (b), and (a), respectively, in the
case where $\tau = 3.5\sigma$.  The orthogonalized snapshots in
Figure~\ref{fig:test_velocity}(f) ($\tau = 3.5\sigma$) are not as
localized as those in Figure~\ref{fig:test_velocity}(b) ($\tau =
2.5\sigma$); the quality of the inversion suffers as well.  However, the
algorithm is stable in the sense that $\mathrm{cond}(U^*U) \approx
13.13$.  

Finally, we ran a simulation with $\tau = 0.5\sigma$.  In this case the
algorithm runs into stability issues, a problem heralded by the fact
that $\mathrm{cond}(U^*U) \approx 1.55 \times 10^9$.  

These numerical experiments suggest that an appropriate value of $\tau$
may be chosen by first selecting a relatively large value of $\tau
\sim\sigma$ and decreasing it until $\mathrm{cond}(U^*U)$ becomes too
large.

These results can be understood from a physical point of view.  If
$\tau$ is too large, the wave travels too far between consecutive
measurements, so the corresponding snapshots have disjoint supports.
Since our method obtains the image from the projection of the propagator
onto the subspace of the snapshots, if there are regions of the domain
not covered by the supports of the snapshots there is no way for us to
reconstruct the velocity in those regions.  If $\tau$ is too small, the
snapshots overlap too much and become almost linearly dependent, which
leads to a large condition number for the Gram matrix $U^*U$.
\begin{figure}
    \begin{center}
    \begin{tabular}{c c}
    \includegraphics[width=0.37\textwidth]{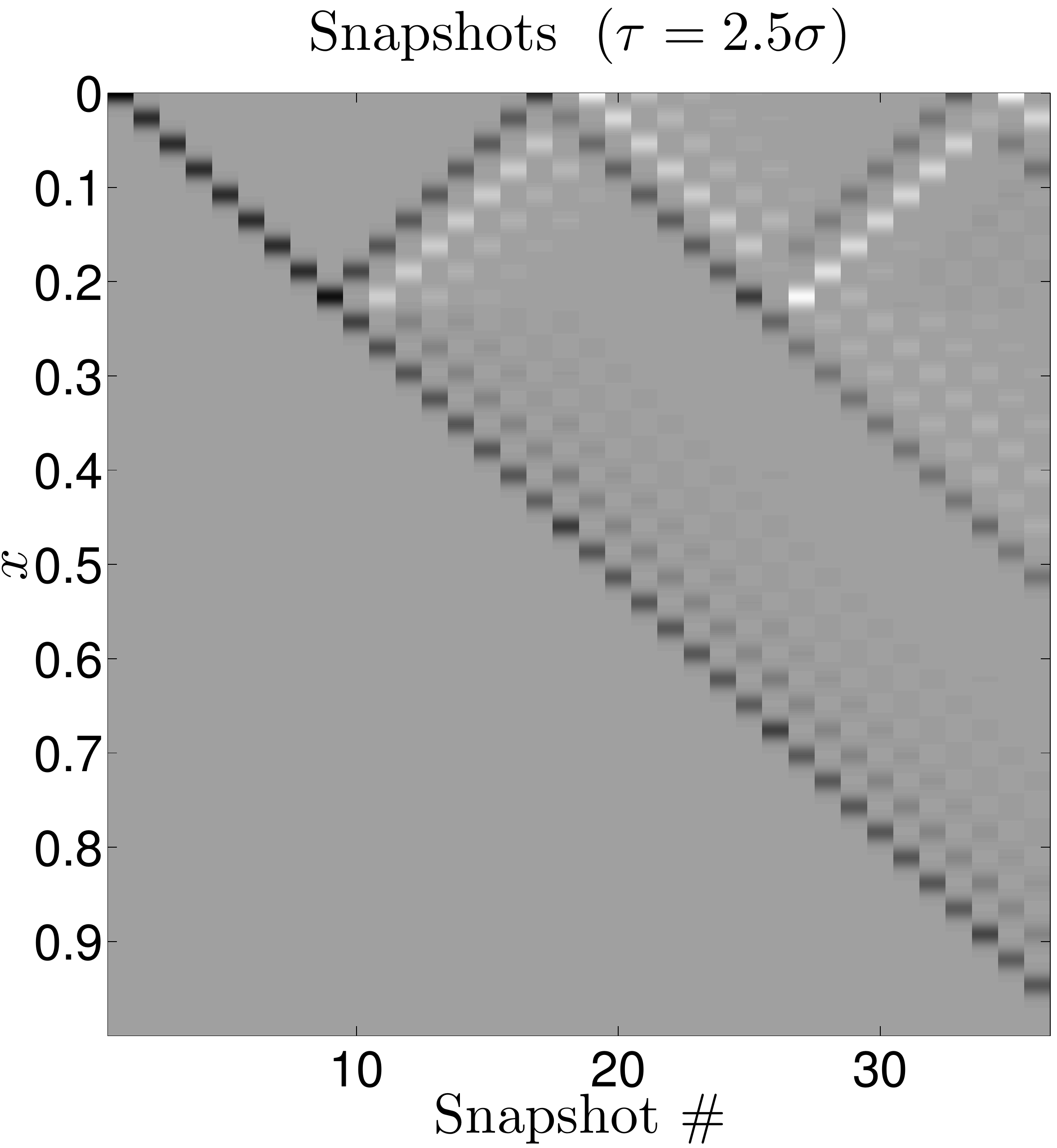} & 
	\includegraphics[width=0.37\textwidth]{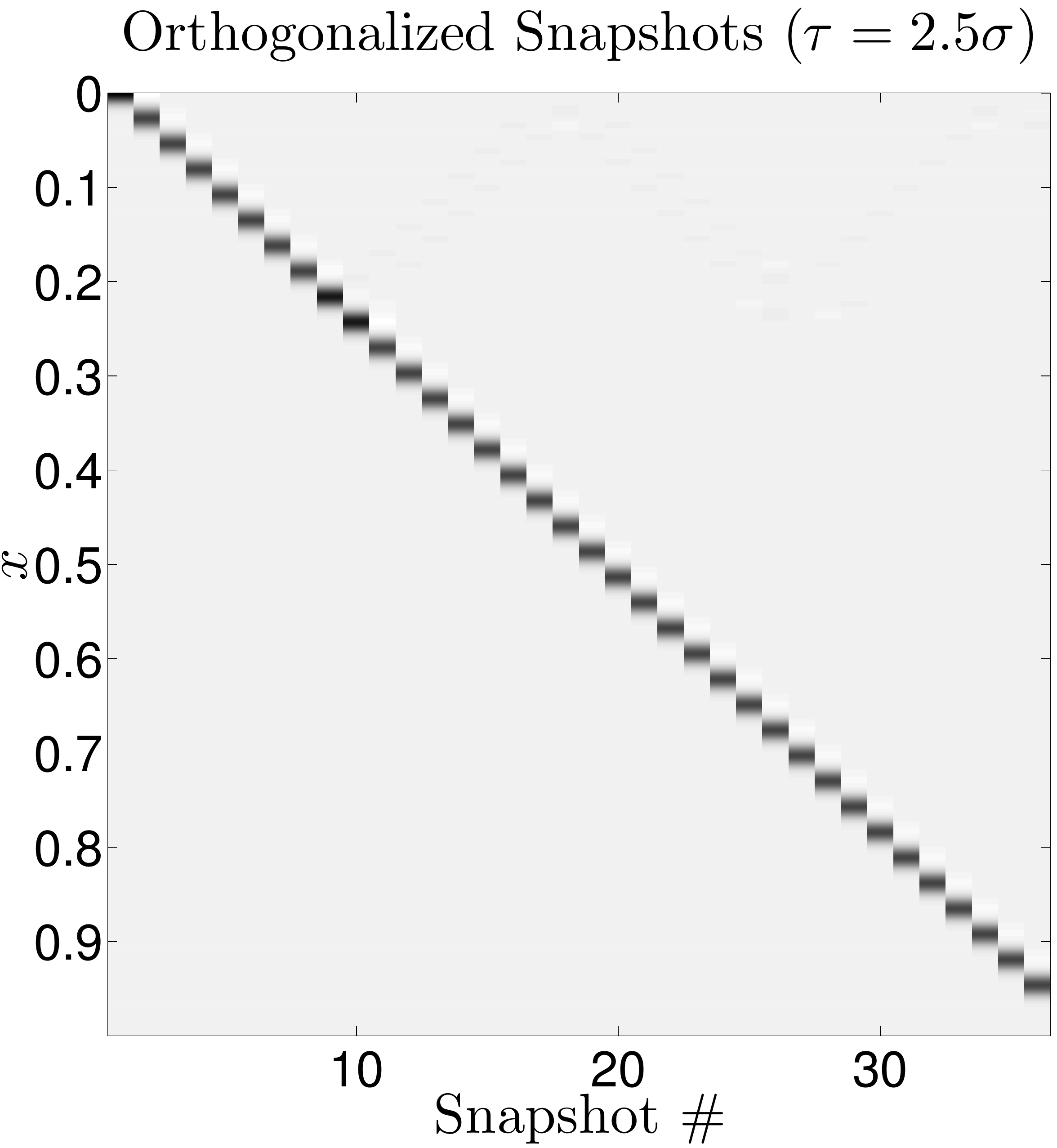} \\
	(a) & (b) \myspace
	\includegraphics[width=0.37\textwidth]{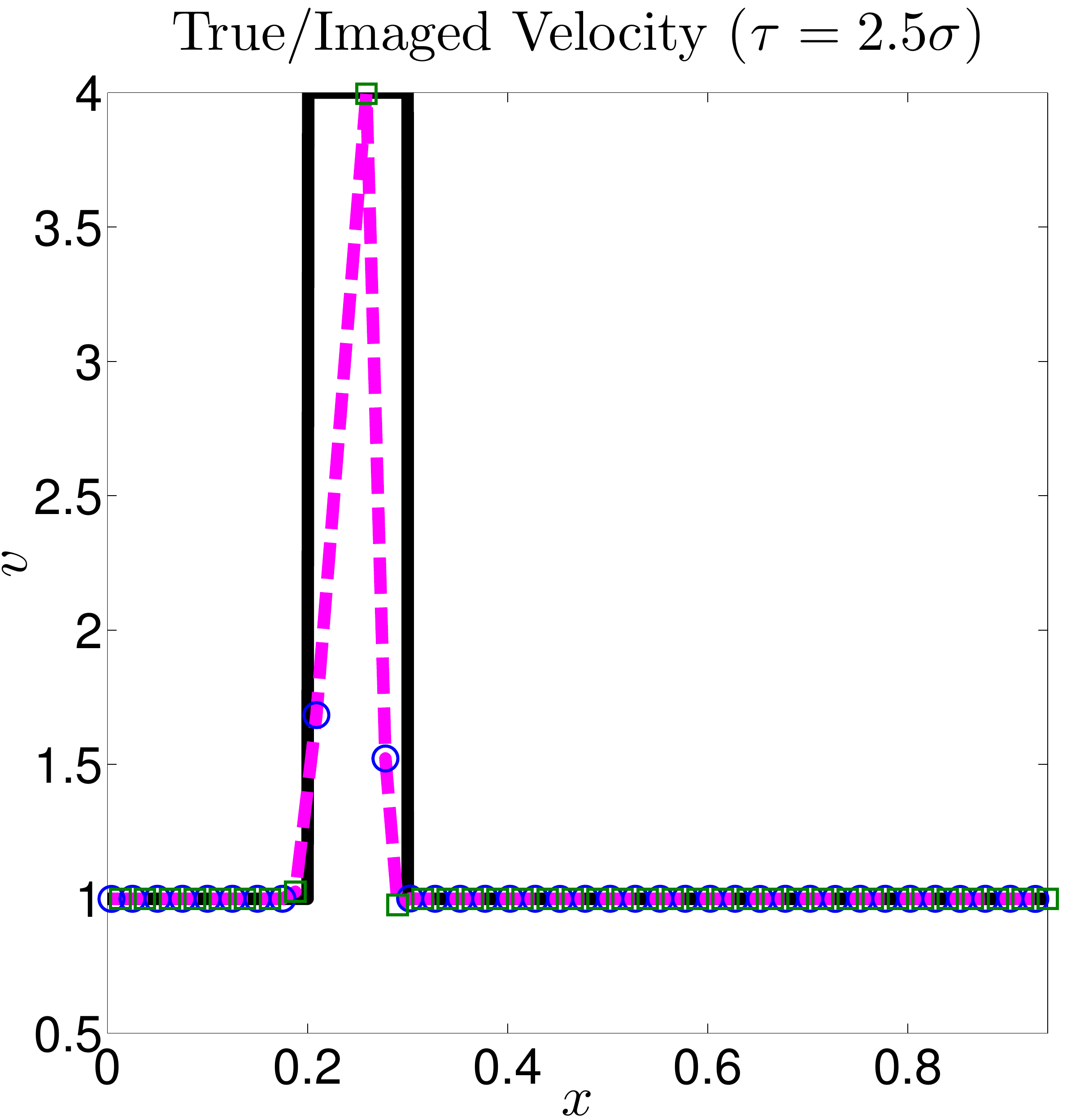} & 
	\includegraphics[width=0.37\textwidth]{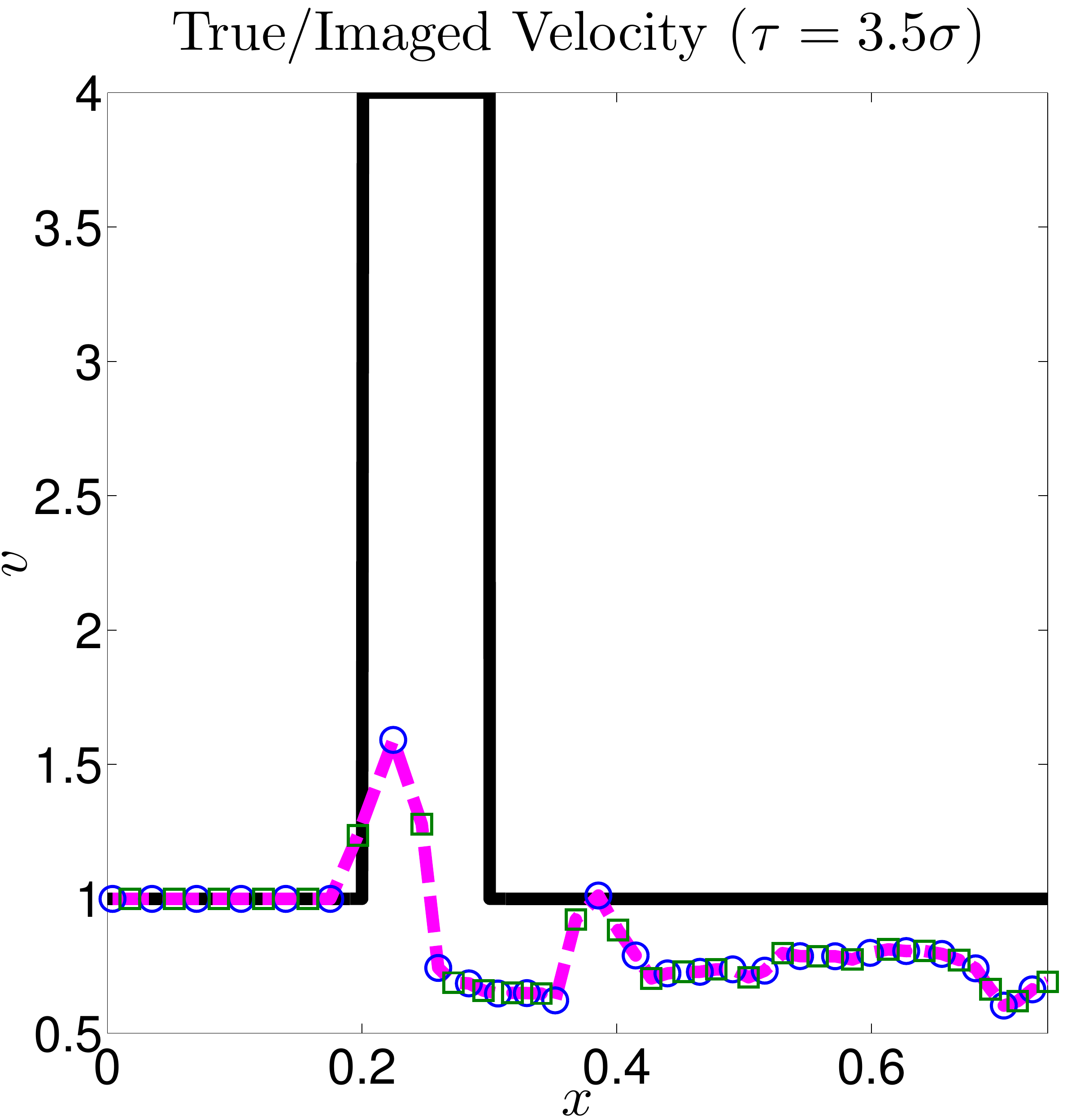} \\
	(c) & (d) \myspace
	\includegraphics[width=0.37\textwidth]{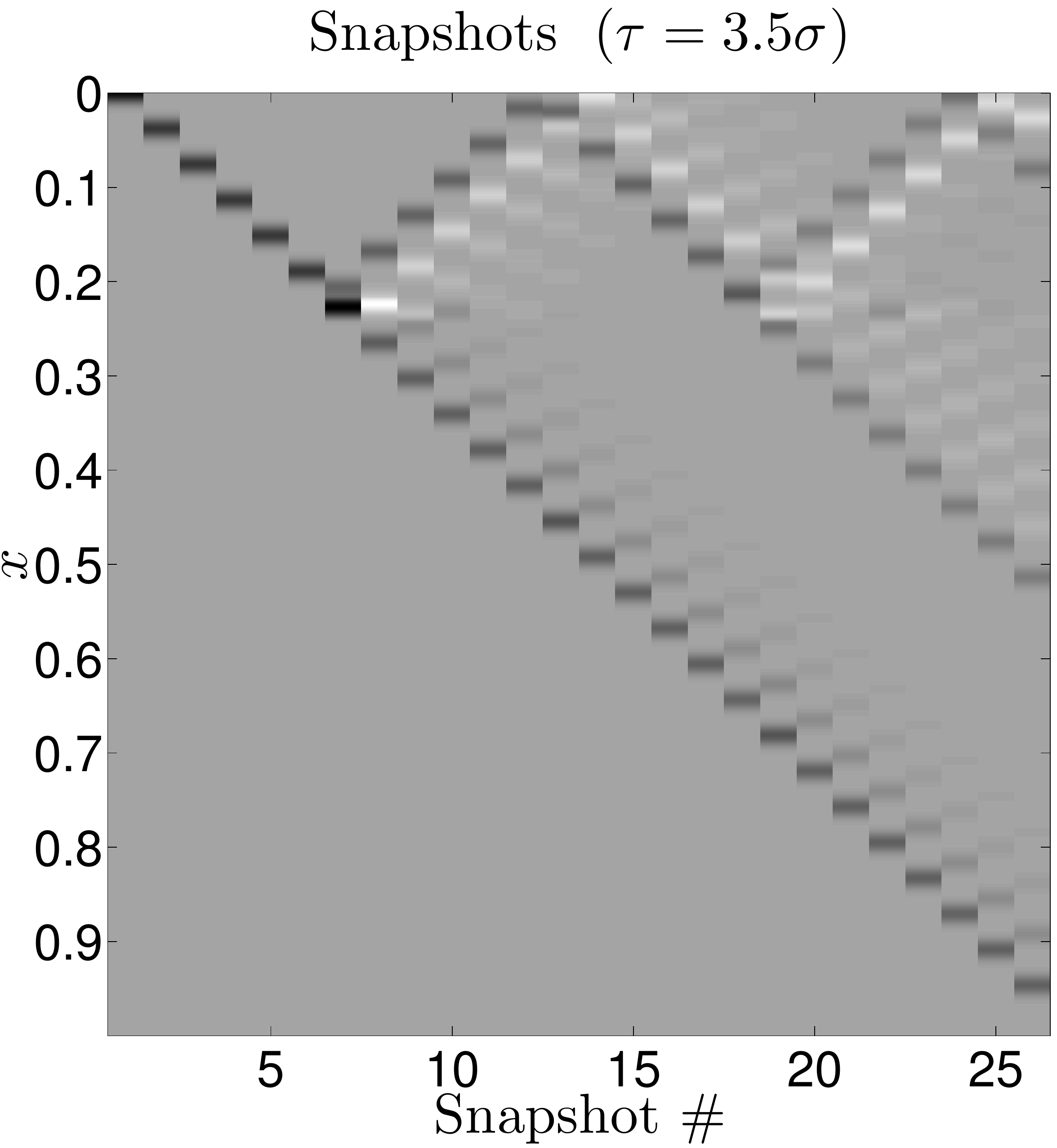} & 
	\includegraphics[width=0.37\textwidth]{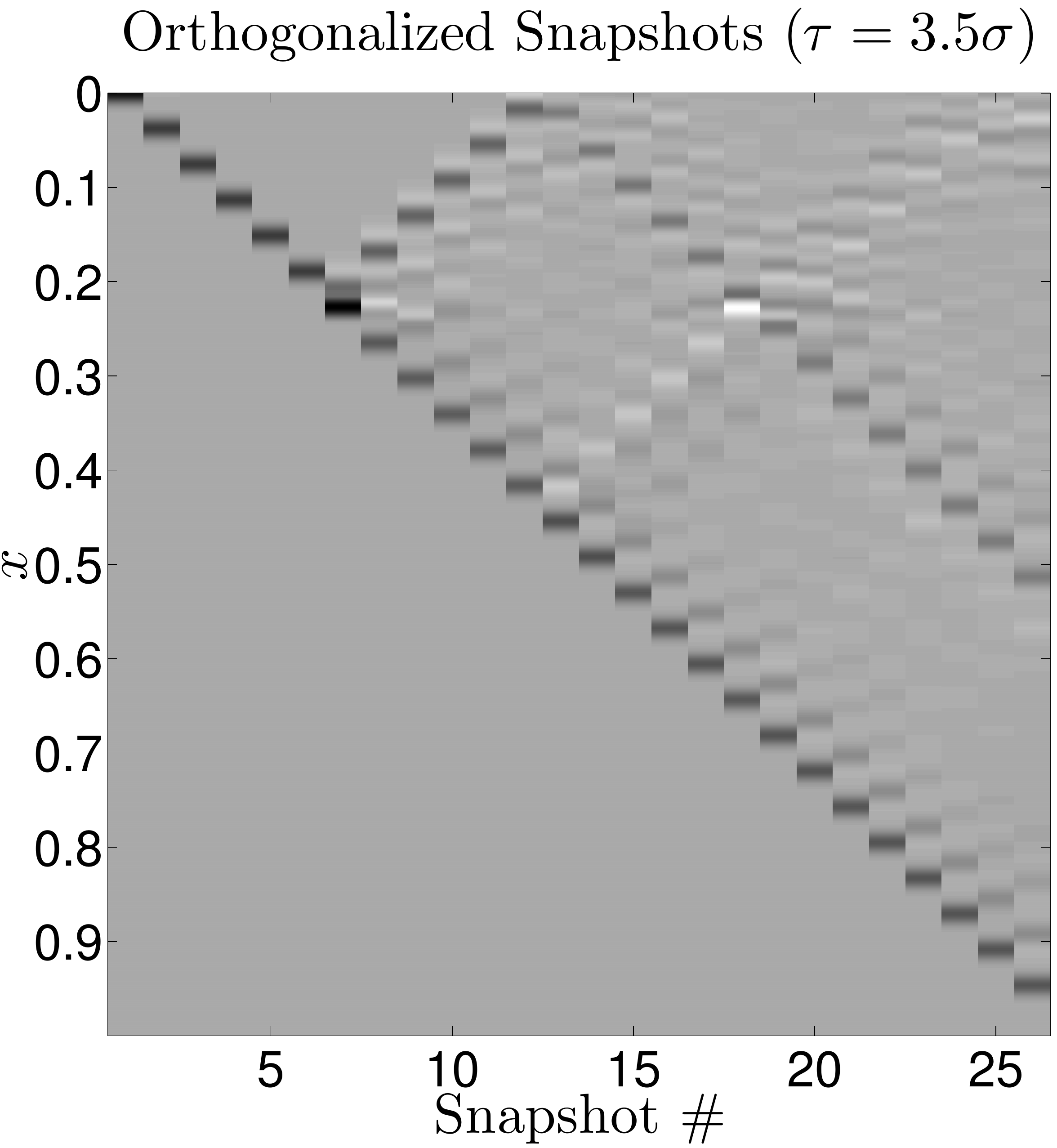} \\
	(e) & (f)
    \end{tabular}
    \caption{In this figure, we show that the choice of $\tau$ can have
    a large influence on the localization properties of the
    orthogonalized snapshots and the quality of the inversion.  (a) The
    primary snapshots $u_k$ for the velocity model illustrated in (c);
    (b) the orthogonalized primary snapshots $\ou_j$ generated by
    Algorithm~\ref{alg:algorithm_2} (converted to the spatial coordinate
    $x$); (c) the true velocity model (solid, black line) and inversion
    results for $\tau = 2.5\sigma$ --- the blue circles are
    approximately located at
    $\tx^{-1}\left(\tx_j^0\right)$ and the green squares are
    approximately located at $\tx^{-1}\left(\ha{\tx}_j^0\right)$.  (d)
    The true velocity model and inversion results when $\tau =
    3.5\sigma$; (e) the primary snapshots for the velocity model in (d);
    (f) the orthogonalized primary snapshots for the velocity model in
    (d).}
    \label{fig:test_velocity}
    \end{center}
\end{figure}

In Figure~\ref{fig:velocities_1_2}, we plot the primary snapshots,
orthogonalized primary snapshots, and inversion results for two
additional velocity models.  The first velocity model is illustrated in
by the solid, black line in Figure~\ref{fig:velocities_1_2}(c).  We
chose $\tau = \sigma$ for this simulation. The orthogonalized snapshots
in Figure~\ref{fig:velocities_1_2}(b) are quite localized.  In this
case, $\mathrm{cond}(U^*U) \approx 4.11 \times 10^3$.  

The second velocity model, illustrated in
Figure~\ref{fig:velocities_1_2}, consists of two smooth inclusions and a
discontinuous inclusion.  We chose $\tau = 1.5\sigma$, which gives
$\mathrm{cond}(U^*U) \approx 28.10$.  
\begin{figure}
    \begin{center}
    \begin{tabular}{c c}
        \includegraphics[width=0.38\textwidth]{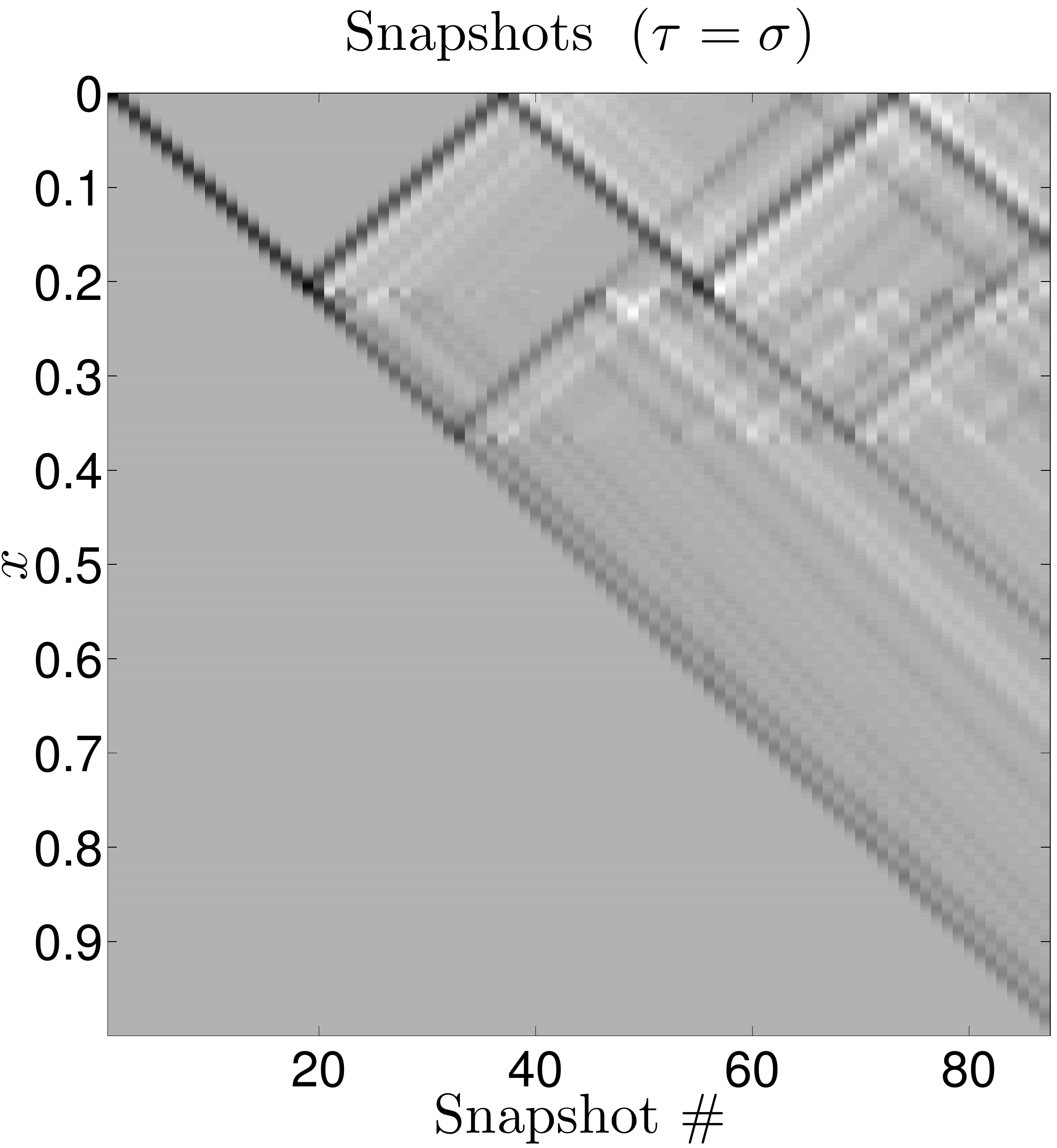} & 
	    \includegraphics[width=0.38\textwidth]{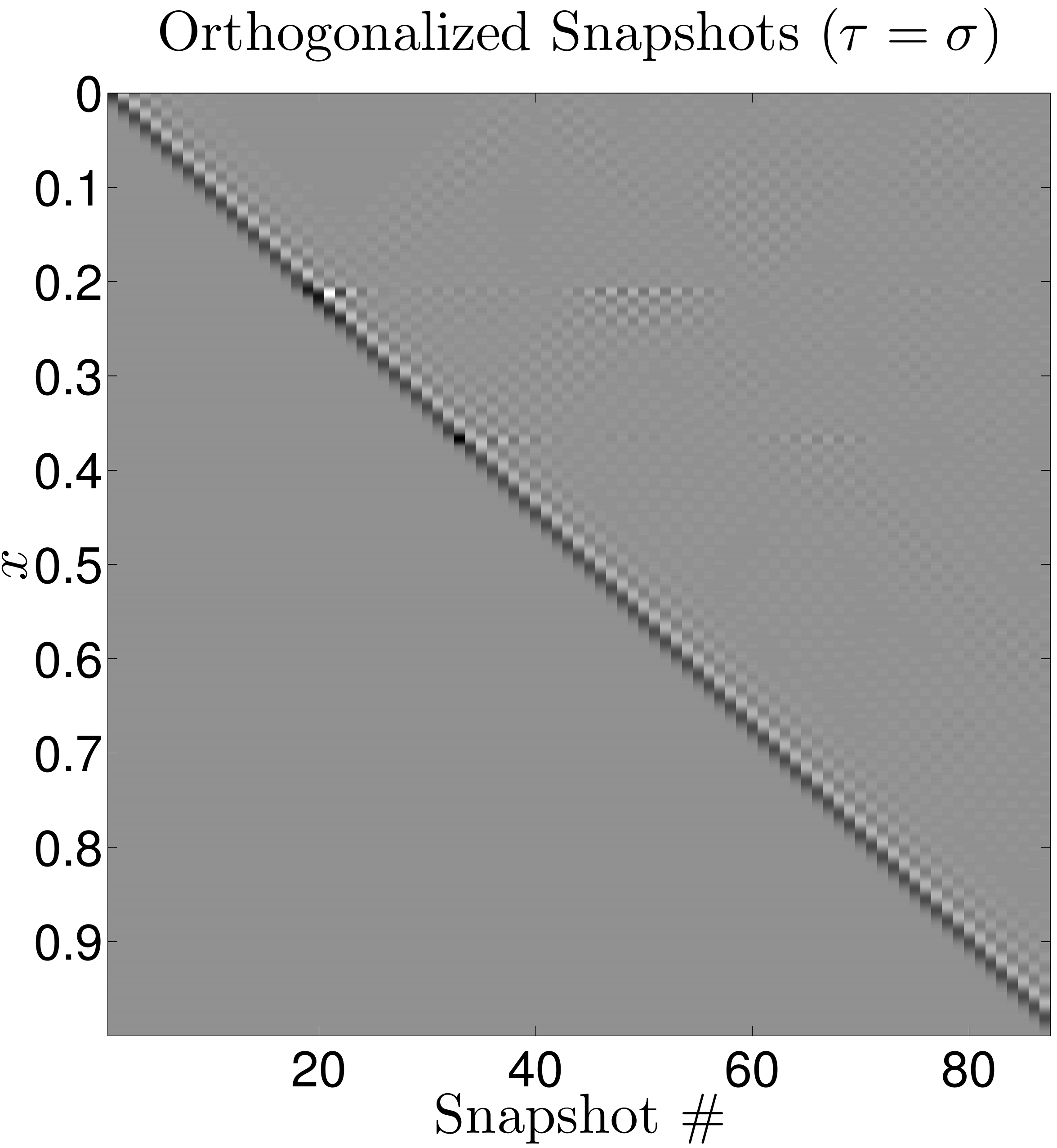} \\
	    (a) & (b) \myspace
	    \includegraphics[width=0.38\textwidth]{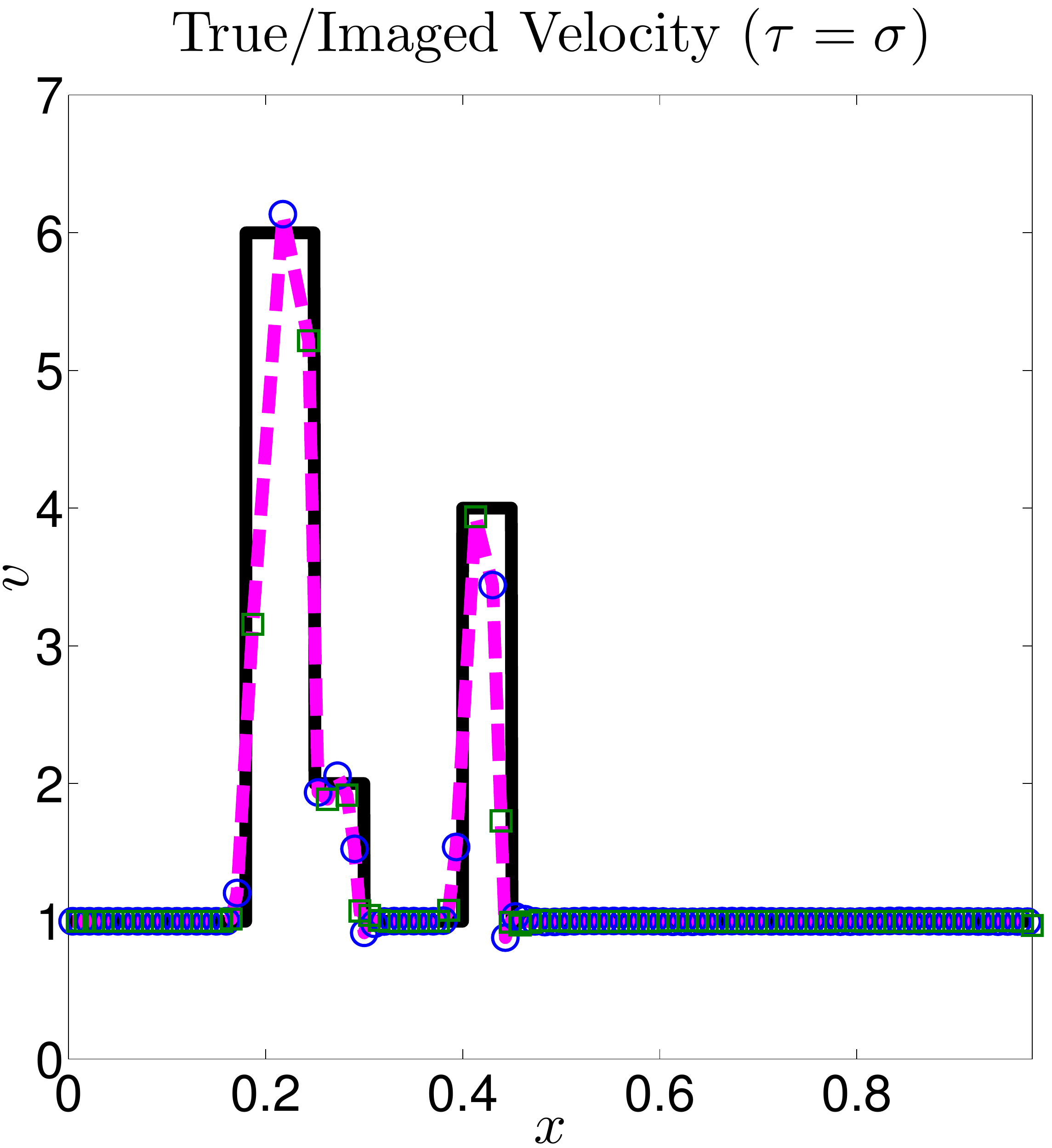} & 
	    \includegraphics[width=0.38\textwidth]{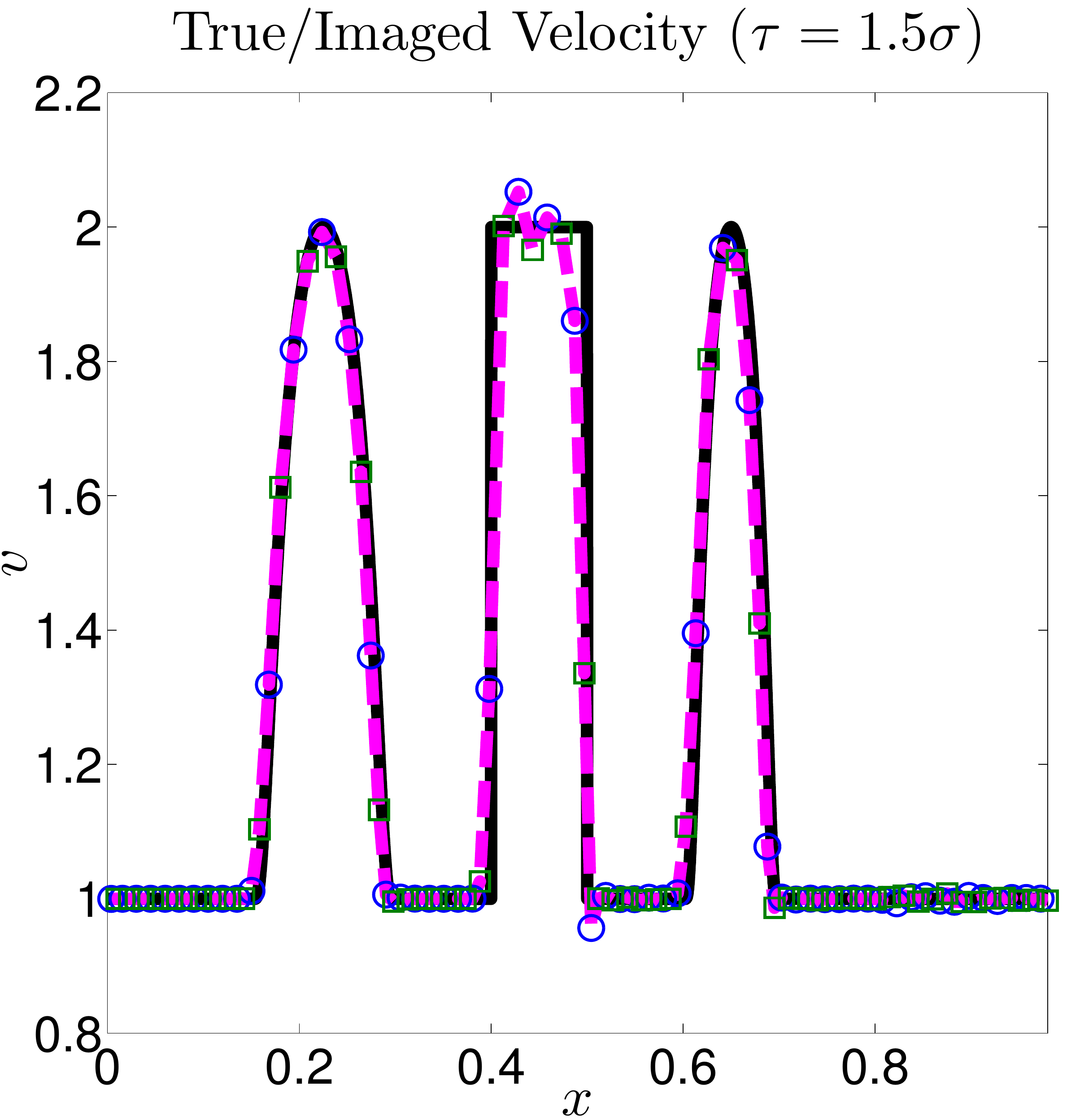} \\
	    (c) & (d) \myspace
	    \includegraphics[width=0.38\textwidth]{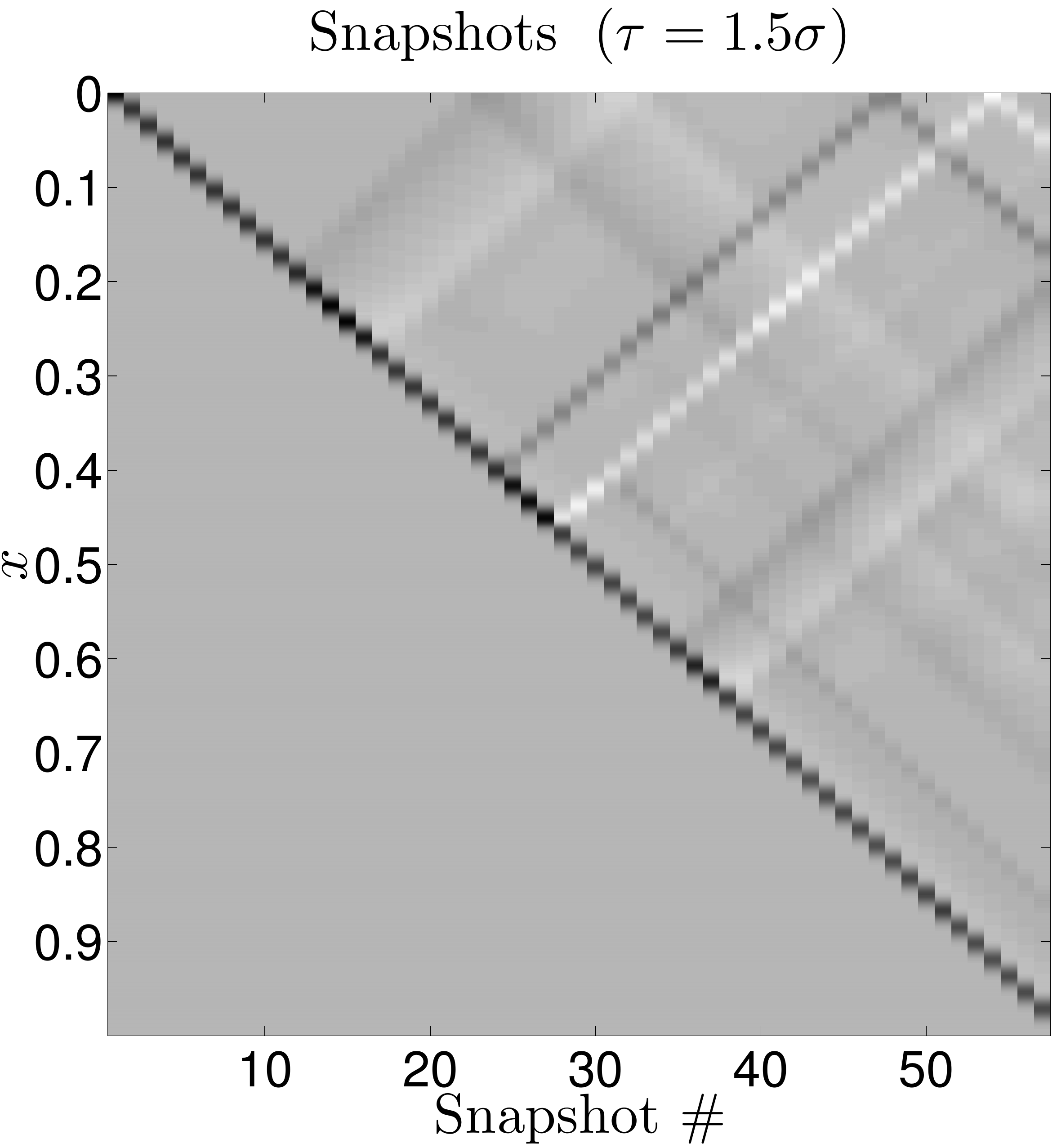} & 
	    \includegraphics[width=0.38\textwidth]{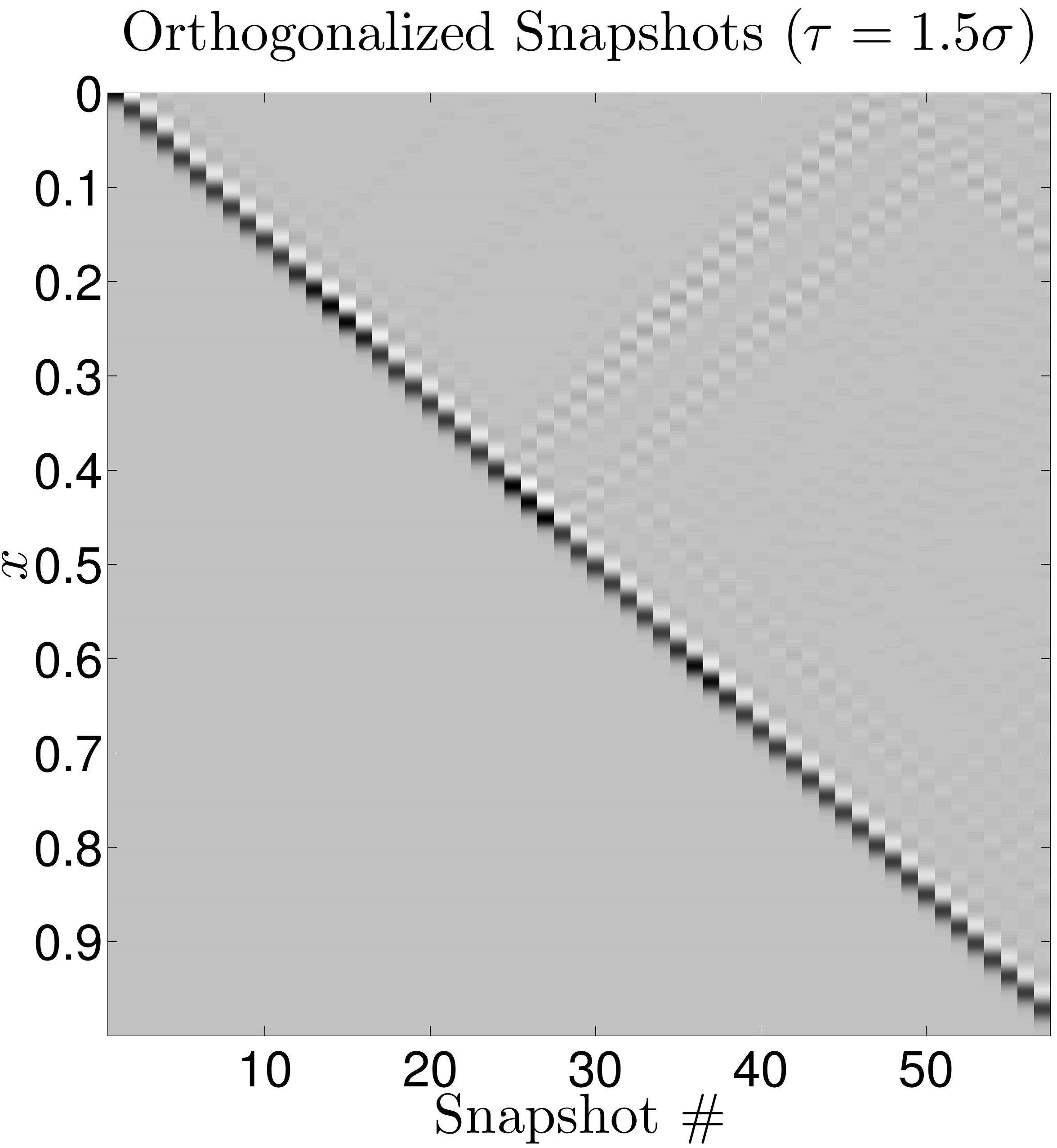} \\
	    (e) & (f)
    \end{tabular}
    \caption{(a) The primary snapshots for the velocity model in (c);
    (b) the orthogonalized primary snapshots for the velocity model in
    (c); (c) the velocity model is drawn as a solid, black line, while
    the inversion results for $\tau = \sigma$ are represented by the
    blue circles ($\tx^{-1}\left(\tx_j^0\right)$) and green squares
    ($\tx^{-1}\left(\ha{\tx}_j^0\right)$).  (d) Another velocity model
    and inversion results; (e) the primary snapshots for the velocity
    model in (d); (f) the primary orthogonalized snapshots for the
    velocity model in (d).}
    \label{fig:velocities_1_2}
    \end{center}
\end{figure}

Finally, we justify our use of the centers of mass of the reference
squared orthogonalized snapshots for the grid points in
\eqref{eqn:center_of_mass} instead of the centers of mass of the squared
orthogonalized snapshots for the true medium (which are unknown in
practice).  In Figure~\ref{fig:com}, the blue squares represent the true
centers of mass of the primary squared orthogonalized snapshots, i.e.,
the height of the $j\textsuperscript{th}$ blue square is 
\begin{equation}\label{eqn:true_com}
    \tx^{-1}\left(\ghat_j\int_0^{\txmax} \left[\ou_j(\tx)\right]^2
        \dfrac{1}{\tv(\tx)} \di{\tx} \right)
    =\ghat_j\int_0^{\xmax} \left[\ou_j(x)\right]^2
        \dfrac{1}{v(x)^2}\di{x}.  
\end{equation}
The green circles represent the centers of mass of the primary squared
orthogonalized snapshots for the (uniform) reference medium, i.e., the
height of the $j\textsuperscript{th}$ green circle is 
\begin{equation}\label{eqn:true_com_0}
    \tx^{-1}\left(\ghat_j^0\int_0^{\txmax^0} 
        \left[\ou^0_j(\tx^0)\right]^2
        \dfrac{1}{\tv^0(\tx^0)} \di{\tx^0} \right)
    = \ghat_j^0\int_0^{\xmax}\left[\ou^0_j(x)\right]^2
        \dfrac{1}{v^0(x)^2} \di{x}.  
\end{equation}
In practice, the map $\tx^{-1}$ cannot be computed exactly since $\tv$
is not known \emph{a priori}.  The red asterisks in Figure~\ref{fig:com}
represent the centers of mass of the reference squared orthogonalized
snapshots that are approximately converted to true coordinates using our
imaged velocity from \eqref{eqn:vel_inv} and a Riemann sum approximation
of the integral in \eqref{eqn:inverse_tt}, namely the formulas from
\eqref{eqn:physical_grid_nodes}; these are the grid points used in the
inversion scheme (and are those shown in
Figures~\ref{fig:test_velocity}(c) and (d) and
Figures~\ref{fig:velocities_1_2}(c) and (d)).  In particular,
Figure~\ref{fig:com}(a) corresponds to the velocity model in
Figure~\ref{fig:test_velocity}(c), Figure~\ref{fig:com}(b) corresponds
to the velocity model in Figure~\ref{fig:test_velocity}(d),
Figure~\ref{fig:com}(c) corresponds to the velocity model in
Figure~\ref{fig:velocities_1_2}(c), and Figure~\ref{fig:com}(d)
corresponds to the velocity model in Figure~\ref{fig:velocities_1_2}(d).
We note that the centers of mass agree quite well (to within a few
percent or less) if $\tau$ is chosen appropriately (as in
Figures~\ref{fig:com}(a), (b), and (d)), while they differ significantly
(around $28$\%) if $\tau$ is chosen poorly (Figure~\ref{fig:com}(b)).
There are even certain choices of $\tau$ for the velocity model in
Figure~\ref{fig:com}(b) for which the grid points are not monotonically
increasing --- in particular, the orthogonal snapshots have large values
far away from the peak centered near the ``optimal'' grid point, which
leads to a poor approximation of the true center of mass.  
\begin{figure}
    \begin{center}
    \begin{tabular}{c c}
        \includegraphics[width=0.42\textwidth]{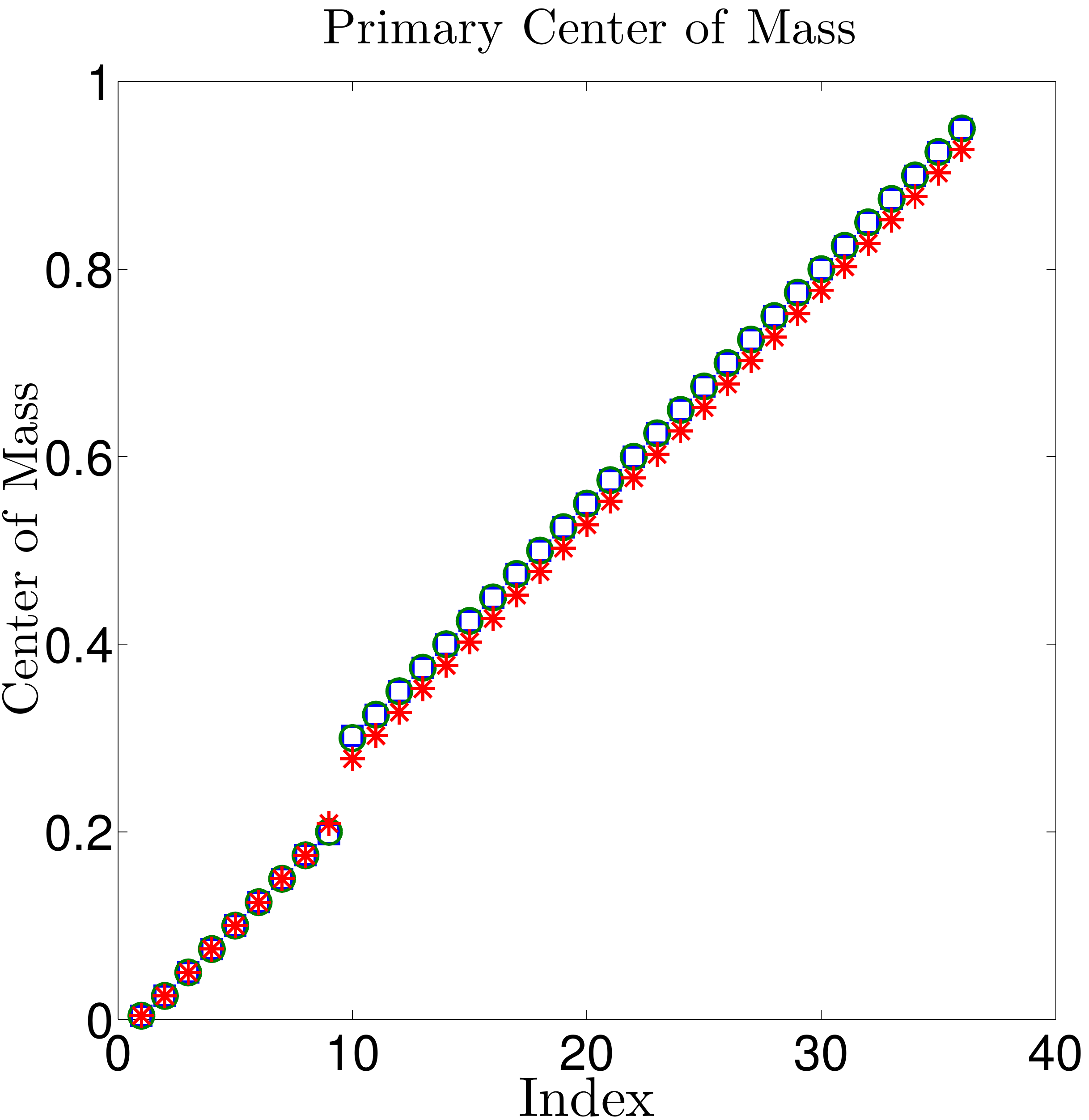} & 
	    \includegraphics[width=0.42\textwidth]{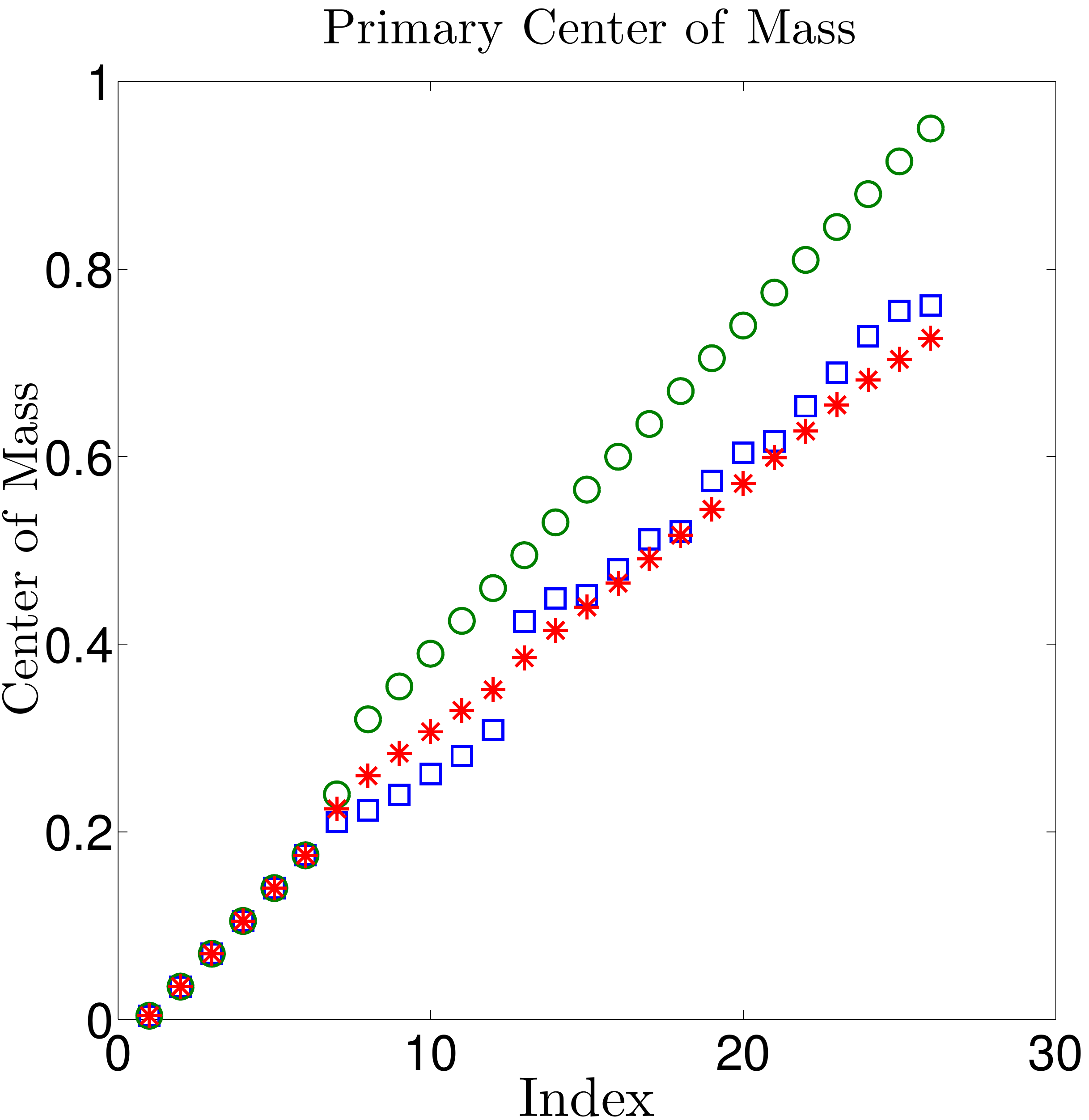} \\
	    (a) & (b) \myspace
	    \includegraphics[width=0.42\textwidth]{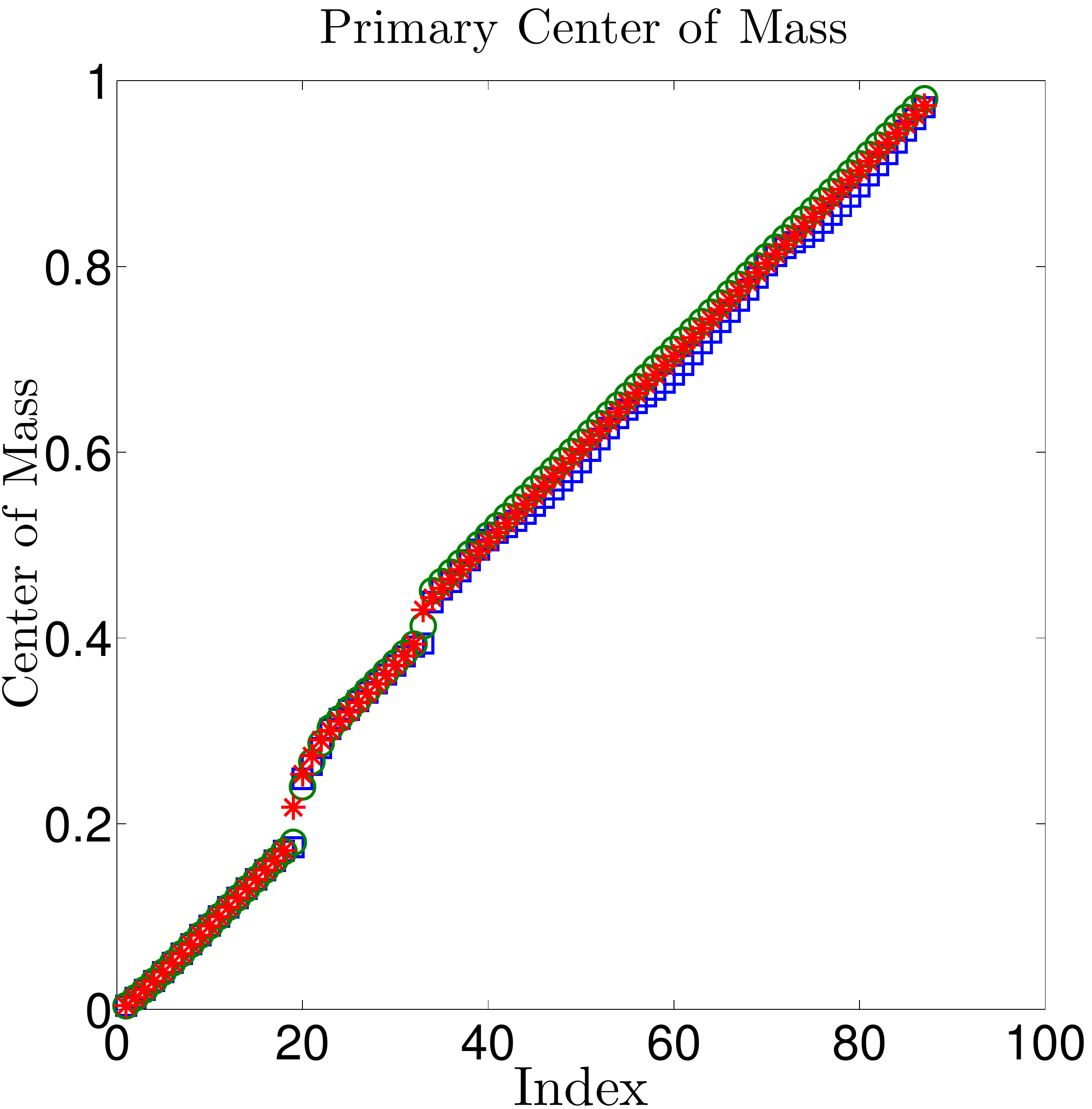} & 
	    \includegraphics[width=0.42\textwidth]{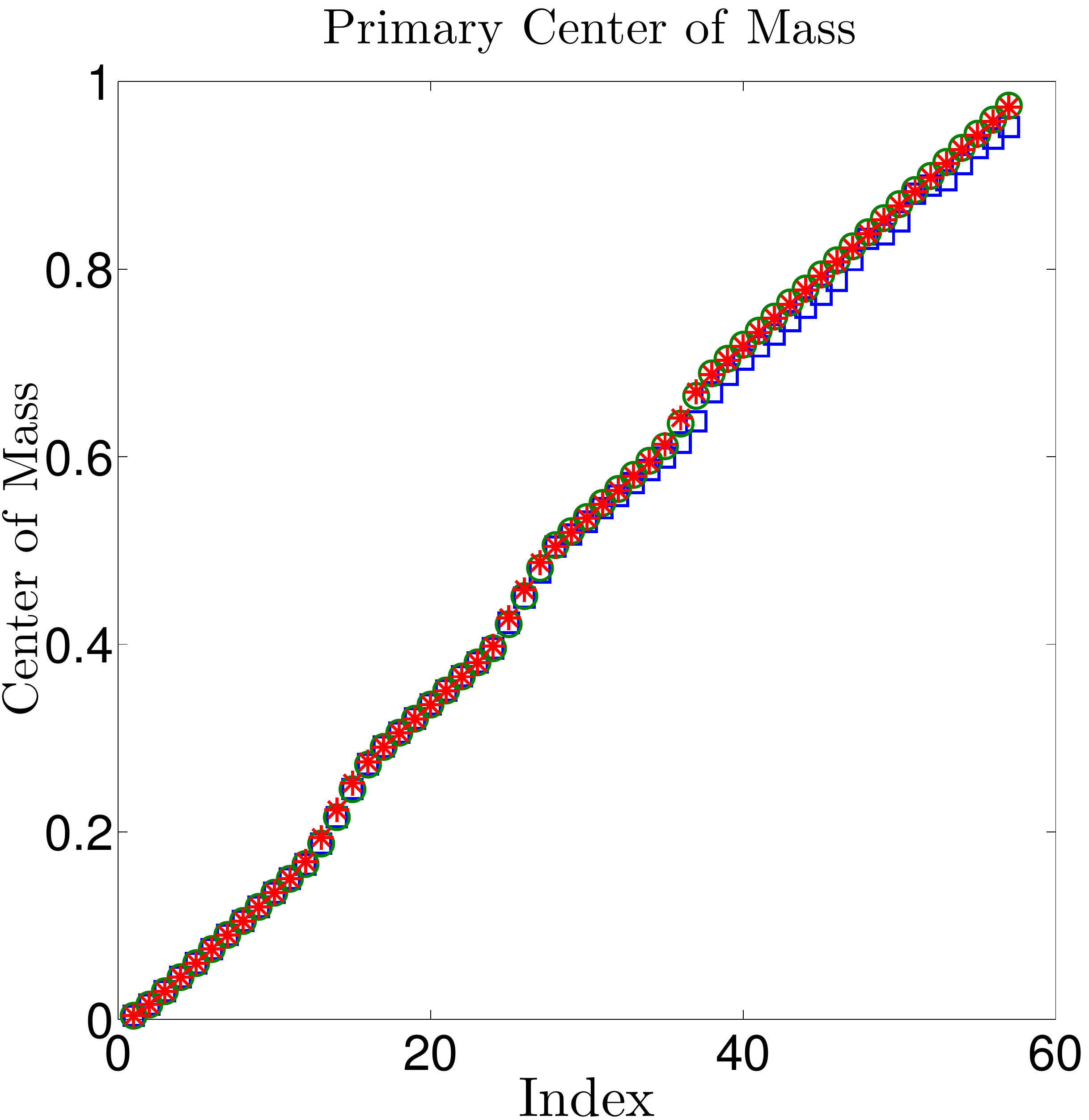} \\
	    (c) & (d)
    \end{tabular}
    \caption{In this figure, we plot the centers of mass and approximate
    centers of mass for (a) the velocity model from
    Figure~\ref{fig:test_velocity}(c); (b) the velocity model from
    Figure~\ref{fig:test_velocity}(d) --- the disagreement between the
    various centers of mass in this figure arises because the
    orthogonalized snapshots are not well localized (because we chose
    $\tau$ to be too large --- see
    Figure~\ref{fig:test_velocity}(d)--(f)); (c) the velocity model from
    Figure~\ref{fig:velocities_1_2}(c); (d) the velocity model from
    Figure~\ref{fig:velocities_1_2}(d).}
    \label{fig:com}
    \end{center}
\end{figure}


\section{Extension to two dimensions}\label{sec:2D}

In this section, we extend our results to two dimensions.  Because the
majority of the results from the one-dimensional case carry over without
significant modifications, we will keep our discussion relatively brief.


\subsection{Multi-input/multi-output formulation}\label{sec:problem_2D}

We begin by defining the region $\Omega \equiv [0,\xmax] \times
[-\ymax,\ymax]$, where we typically take $\ymax = \infty$.  We place $m$
sources at the points $(0,y^i)$ for $i = 1,\ldots,m$, which leads us to
consider the following Cauchy problem on $\Omega \times [0,\infty[$:
\begin{equation}\label{eqn:cauchyb_2D}
    A\ha{u}^i + \ha{u}^i_{tt} = 0, \qquad 
    \ha{u}|_{t=0} = \tq(A)\delta(x+0)\delta(y-y^i), \ \ha{u}^i_t|_{t=0}
    = 0,
\end{equation}
where we take $\tq$ is as in \eqref{eqn:gaussian}, and 
\begin{equation}\label{eqn:A_2D}
    A \equiv -v^2\left(\frac{\partial^2 }{\partial x^2} 
        + \frac{\partial^2 }{\partial y^2}\right)
\end{equation}
together with the boundary conditions
\begin{equation*}
    \ha{u}^i|_{y=\pm\ymax} = 0, \quad 
    \ha{u}^i_x|_{x=0} = 0, \ \ha{u}^i|_{x=\xmax} = 0.
\end{equation*}
We assume $v(0,y) = v(0,0)$ for $y \in [-\ymax,\ymax]$.  

For simplicity, we place the receivers at the same locations as the
sources.  Then, for $k = 0,\ldots,2n-1$, we organize our measurements in
a matrix $\la{F}_k \in \mathbb{R}^{m\times m}$ with $F_k^{ij} \equiv
\ha{u}^i(0,y^j,k\tau)$.  This is the square multi-input/multi-output 
(square MIMO) problem in control theory terminology.

\subsection{MIMO reduced-order model in block form}

For $i = 1,\ldots,m$, let $u^i$ be the solution to the following Cauchy
problem on $\Omega \times [0,\infty[$:
\begin{equation}\label{eqn:cauchys_2D}
    Au^i + u^i_{tt} = 0, \qquad u^i|_{t=0} = b^i, \ u^i_t|_{t=0} = 0,
\end{equation}
where
\begin{equation}\label{eqn:b_2D}
    b^i(x,y) \equiv v(0,0)\tq(A)^{1/2}\delta(x+0)\delta(y-y^i).
\end{equation}
For $k = 0,\ldots,2n-1$, we define the \emph{snapshots}
\begin{equation}\label{eqn:snapshots_def_2D}
    U_k \equiv \left[ u_k^1, \ldots, u_k^m \right] = T_k(P)B,
\end{equation}
where $P = \cos\left(\tau\sqrt{A}\right)$ is the propagation operator,
$B = \left[ b^1, \ldots, b^m \right]$, and 
\[
    u_k^i \equiv u^i(x,y,k\tau) = T_k(P)b^i.
\]
Then the measurement matrix 
\begin{equation}\label{eqn:F_k_2D}
    \la{F}_k = U_0^*U_k = B^*T_k(P)B,
\end{equation}
where $^*$ is defined as before (see \eqref{eqn:WstarU}) with the inner
product
\begin{equation*}
    \left\llangle u, w \right\rrangle
    \equiv \int_{\Omega} u(x,y) w(x,y) \frac{1}{v^2(x,y)} \di{x}\di{y}.
\end{equation*}

The measurement matrix can also be represented by 
\begin{equation}\label{eqn:data_2D}
    \la{F}_k = \int_{-1}^1 T_k(\mu)\la{\eta}_0(\mu) \di{\mu},
\end{equation}
where $\la{\eta}_0$ is an $m\times m$ matrix measure.  In particular,
$\eta^{ij}_0(\mu) = \eta^i(0,y^j,\mu)$ where $\eta^i(x,y,\mu)$ is
defined as in \eqref{eqn:eta} with $\rho$ replaced by 
\begin{equation}\label{eqn:rho_2D}
    \rho^i(x,y,\lambda) \equiv \ds\sum_{l=1}^{\infty}
    \delta(\lambda-\lambda_l)\frac{z_l(0,y^i)}{v(0,0)^2}z_l(x,y);
\end{equation}
$(\lambda_l, z_l)$ are eigenpairs of $A$ with $\left\llangle z_l, z_j
\right\rrangle = \delta_{lj}$.  Next we construct a generalized Gaussian
quadrature such that 
\begin{equation}\label{eqn:chebmatch_2D}
    \int_{-1}^1 T_k(\mu)\la{\eta}_0(\mu)\di{\mu} = \sum_{j=1}^n \la{Y}_j
    T_k(\la{\Theta}_j) \la{Y}_j^T = \la{F}_k \eqfor k = 0,\ldots,2n-1, 
\end{equation}
where $\la{Y}_j = \left[ \la{\Psi}_{1j}, \ldots, \la{\Psi}_{mj}\right]
\in \mathbb{R}^{m \times m}$, i.e., for $l = 1,\ldots,m$,
$\la{\Psi}_{lj} \in \mathbb{R}^m$ is the $l\textsuperscript{th}$ column
of the matrix $\la{Y}_j$, and $\la{\Theta}_j =
\diag(\theta_{1j}, \ldots, \theta_{mj}) \in \mathbb{R}^{m\times
m}$.  

We define the snapshot matrix $U = \left[ U_1, \ldots, U_n\right]$.
Then matrix versions of Lemmas~\ref{lem:data_lemma}--\ref{lem:TplusH}
hold.  In particular, if 
\begin{equation*}
    \la{H} = (U^*U)^{-1}(U^*PU) \in \mathbb{R}^{mn \times mn}
    \eqand{and} 
    \la{E}_1 = 
    \begin{bmatrix}
        \la{I}_{m\times m} \\
	\la{0}_{m\times m} \\
	\vdots \\
	\la{0}_{m\times m}
    \end{bmatrix}
    \in \mathbb{R}^{mn \times m},
\end{equation*}
then 
\begin{equation}\label{eqn:match_2D}
    \la{F}_k = \la{E}_1^T(U^*U)T_k(\la{H})\la{E}_1.
\end{equation}
Additionally, $\la{H}$ has the eigendecomposition 
\begin{equation}\label{eqn:generaleig_2D}
    \la{H} = \la{\Phi}\la{\Theta}\la{\Phi}^TU^*U,
\end{equation}
where $\la{\Theta}$, $\la{\Phi} \in \mathbb{R}^{mn\times mn}$ such that
$\la{\Phi}^TU^*U\la{\Phi} = \la{I}_{mn\times mn}$.  We emphasize 
$\la{H}$ is known by the matrix version of Lemma~\ref{lem:TplusH} (with
$f_k$ replaced by $\la{F}_k$ in the statement and proof of the lemma).
Substituting \eqref{eqn:generaleig_2D} into \eqref{eqn:match_2D} gives 
\begin{equation}\label{eqn:chi_2D}
    \la{F}_k = \la{\chi}^TT_k(\la{\Theta})\la{\chi} \eqfor k =
    0,\ldots,2n-1, \eqand{where} \la{\chi} \equiv
    \la{\Phi}^TU^*U\la{E}_1 \in \mathbb{R}^{mn\times m}. 
\end{equation}
Comparing this with \eqref{eqn:chebmatch_2D} gives
\begin{equation}\label{eqn:tc_2D}
    \diag \ \la{\Theta}_j = \la{\Theta} \eqand{and} 
    \la{\chi} = 
    \begin{bmatrix}
        \la{Y}_1^T \\
	\vdots \\
	\la{Y}_n^T
    \end{bmatrix}.
\end{equation}

We may also construct a symmetric, positive-semidefinite matrix $\la{C}
\in \mathbb{R}^{m \times m}$ and a symmetric, block-tridiagonal matrix 
\begin{equation}\label{eqn:P_n_def_2D}
    \la{P}_n = 
    \begin{bmatrix}
        \la{\alpha}_1 & \la{\beta}_1^T & & \\
	\la{\beta}_1 & \la{\alpha}_2 & \ddots & \\
	& \ddots & \ddots & \la{\beta}_{n-1}^T \\
	& & \la{\beta}_{n-1} & \la{\alpha}_n
    \end{bmatrix}
    = \la{P}_n^T \in \mathbb{R}^{mn\times mn}
\end{equation}
with $\la{\alpha}_j = \la{\alpha}_j^T \in \mathbb{R}^{m \times m}$ and
$\la{\beta}_j \in \mathbb{R}^{m\times m}$ such that
\begin{equation}\label{eqn:match-1_2D}
    \la{F}_k = \ds\sum_{j=1}^n \la{Y}_jT_k(\la{\Theta}_j)\la{Y}_l^T = 
        \la{C}^{1/2}\la{E}_1^T T_k(\la{P}_n)\la{E}_1\la{C}^{1/2} \eqfor 
	k = 0,\ldots,2n-1.
\end{equation}
Taking $k = 0$ in \eqref{eqn:match-1_2D} gives 
\begin{equation}\label{eqn:C_2D}
    \la{C} = \ds\sum_{j=1}^n \la{Y}_j\la{Y}_j^T = \la{F}_0 
    = \int_{-1}^1 \la{\eta}_0(\mu) \di{\mu}.
\end{equation}
From \eqref{eqn:C_2D}, we immediately see that $\la{C}$ is symmetric and
positive-semidefinite --- $\la{C}$ will be positive-definite if and only
if the matrix $\left[ \la{Y}_1, \ldots, \la{Y}_n\right] =
\left[ \la{\Psi}_{11},\ldots,\la{\Psi}_{mn}\right] \in
\mathbb{R}^{m\times mn}$ has rank equal to $m$. 

Analogously to the 1D case, the matrix $\la{P}_n$ has the
eigendecomposition
\begin{equation}\label{eqn:P_n_eig_2D}
    \la{P}_n \la{X} = \la{X}\la{\Theta}, \quad
    \la{X} = \left[\la{X}_1, \ldots, \la{X}_n \right]
    \in\mathbb{R}^{mn\times mn}, 
    \quad \la{X}_l^T\la{X}_j =
    \delta_{lj}\la{I}_{m\times m}
\end{equation}
(the matrices $\la{X}_j \in \mathbb{R}^{m\times m}$ are ``block
eigenvectors'' of $\la{P}_n$ corresponding to the ``block eigenvalues''
$\la{\Theta}_j$).  

We compare \eqref{eqn:match-1_2D} with
\eqref{eqn:chi_2D}--\eqref{eqn:tc_2D} to find $\la{E}_1^T\la{X}_j =
\la{C}^{-1/2}\la{Y}_j$.  Because \eqref{eqn:P_n_eig_2D} is equivalent to
$\la{\Theta}\la{X}^T = \la{X}^T \la{P}_n$, the matrix $\la{P}_n$ may be
constructed using the block-Lanczos algorithm in
Algorithm~\ref{alg:block_Lanczos} (the $mn\times m$ Lanczos ``vectors''
$\la{Q}_j$ in this algorithm are the ``columns'' of the matrix
$\la{X}^T$, i.e., $\la{X}^T = \left[ \la{Q}_1, \ldots,
\la{Q}_n \right]$) --- see, e.g., the book by Parlett
\cite{Parlett:1998:SEP}.
\begin{algorithm}[H]
\caption{Block Lanczos Algorithm for Computing $\la{\alpha}_j$,
    $\la{\beta}_j$.}
    \label{alg:block_Lanczos}
    \begin{algorithmic}
        \Require $\la{C}$, $\la{\Theta}_j$, $\la{Y}_j$ for 
	    $j = 1,\ldots,n$
        \Ensure $\la{\alpha}_j$ ($j = 1,\ldots,n$) and $\la{\beta}_j$ 
	    ($j = 1,\ldots,n-1$), i.e., the nonzero elements of 
	    $\la{P}_n$
	\State Set $\la{Q}_0 = \la{0}_{mn\times m}$ and 
	    \[
                \la{Q}_1 = 
		\begin{bmatrix}
                    \la{Y}_1^T\la{C}^{-1/2} \\
		    \vdots \\
		    \la{Y}_n^T\la{C}^{-1/2}
		\end{bmatrix}.
	    \]
        \For{$j = 1,\ldots,n$}
            \begin{enumerate}
                \item $\la{\alpha}_j = \la{Q}_j^T\la{\Theta}\la{Q}_j$;
		\item $\la{R}_j = \la{\Theta}\la{Q}_j -
			\la{Q}_{j-1}\la{\beta}_{j-1}^T -
			\la{Q}_j\la{\alpha}_j$;
        \item $\la{\beta}_j = \left(\la{R}_j^T\la{R}_j\right)^{1/2}$;
		\item $\la{Q}_{j+1} = \la{R}_j\la{\beta}_j^{-1}$.
	    \end{enumerate}    
        \EndFor
    \end{algorithmic}
\end{algorithm}


\subsection{Continuum interpretation in two dimensions}
\label{subsec:continuum_2D}

We now derive an inversion algorithm analogous to the algorithm we
constructed in \S~\ref{sec:inversion}.  The key ingredients are the
matrix extensions of $\ghat_j$ and $\g_j$; in particular, we now
consider $m \times m$ symmetric, positive-definite matrices $\Ghat_j$,
$\G_j$ for $j = 1,\ldots,n$.  These matrices may be computed via
Algorithm~\ref{alg:algorithm_1_2D} \cite{Druskin:2010:OSF,
Druskin:2010:OSF2}, which is a matrix version of
Algorithm~\ref{alg:algorithm_1}.  We conjecture that full rank of the
Gram matrix $U^*U$ is a sufficient condition for
Algorithm~\ref{alg:algorithm_1_2D} to succeed.
\begin{algorithm}[H]
    \caption{Computation of $\Ghat_j$, $\G_j$}
	\label{alg:algorithm_1_2D}
	\begin{algorithmic}
	    \Require $\la{C}$, $\la{Y}_l$, $\la{\Lambda}_l = 
	    -\xi(\la{\Theta}_l) \in\mathbb{R}^{m\times m}$ for 
	        $l = 1,\ldots,n$
	    \Ensure $\Ghat_j$, $\G_j$, $j = 1,\ldots,n$
	    \State Set $\obomega_0 = \la{0}_{2mn\times m}$, 
	        $\obmu_1 = \sqrt{0.5}\cdot
		    \left[ 
		        \la{Y}_1, \la{Y}_1, \la{Y}_2, \la{Y}_2, \ldots, 
			\la{Y}_n, \la{Y}_n \right]^T 
			\in\mathbb{R}^{2mn\times m}$, and \newline
            $\la{L} = \diag \ \left(\la{\Lambda}_1^{1/2},
                -\la{\Lambda}_1^{1/2}, \la{\Lambda}_2^{1/2},
                -\la{\Lambda}_2^{1/2}, \ldots, \la{\Lambda}_n^{1/2},
            -\la{\Lambda}_n^{1/2}\right) \in \mathbb{R}^{2mn\times
		2mn}$.
	    \For{$j = 1,\ldots,n$}
            \begin{enumerate}
                \item $\Ghat_j = \left(\obmu_j^T\obmu_j\right)^{-1}$;
                \item $\obomega_j = \obomega_{j-1} + 
		    \la{L}\obmu_j\Ghat_j$;
	        \item $\G_j = \left(\obomega_j^T\obomega_j\right)^{-1}$;
                \item $\obmu_{j+1} = \obmu_{j} - \la{L}\obomega_j\G_j$.
	    \end{enumerate}    
	    \EndFor
	\end{algorithmic}
\end{algorithm}

\begin{remark}\label{rem:heuristics}
    In what follows, we illustrate one way in which the inversion
    algorithm from \S~\ref{sec:inversion} may be extended to 2D.
    In particular, we avoid technical details and focus on providing an
    heuristic justification of our algorithm.  We have recently
    developed a more rigorous 2D inversion algorithm
    \cite{Mamonov:2015:NSI} that relies on many of the ideas discussed
    in the present paper.  

\end{remark}

In 2D, an invertible coordinate transformation to slowness (also known
as ray or traveltime) coordinates analogous to \eqref{eqn:slowness} may
not exist for most relevant cases due to the formation of caustics
\cite{Sava:2005:RWE}.  If the medium under consideration is
``approximately layered'' in the vertical direction, however, it is
plausible that an invertible transformation to ray coordinates exists.  

Henceforth we assume that rays perpendicular to the line $x = 0$ do not
intersect.  This ensures that the ray coordinate transformation $(x,y)
\mapsto (\zeta,\nu)$ exists and is invertible.  Here $\zeta$ represents
the traveltime along a ray and $\nu$ is orthogonal to $\zeta$; we also
assume the line $x = 0$ is mapped to the line $\zeta = 0$.  Thus the
curves $\nu = \const$ represent the rays orthogonal to the
line $x = 0$.  We define $\tu^i(\zeta,\nu) \equiv
u^i(x(\zeta,\nu),y(\zeta,\nu))$ (and similarly for other functions of
$x$ and $y$).  Then \eqref{eqn:cauchys_2D} transforms to
\begin{equation}\label{eqn:cauchyslow_2D}
    \tA\tu^i + \tu^i_{tt} = 0, \qquad 
        \tu|_{t=0} = \tb, \ \tu_t|_{t=0} = 0,
\end{equation}
where $\tA$ is the operator $A$ represented in ray coordinates
\cite{Sava:2005:RWE}.  In particular, in an ``approximately layered''
medium, we approximate $\tA$ along rays by
\begin{equation}\label{eqn:tA_2D}
    \tA \approx
    -\tv \frac{\partial }{\partial \zeta} \left(\frac{1}{\tv}
    \frac{\partial\tu}{\partial \zeta}\right);
\end{equation}
thus our problem essentially reduces to a 1D problem (in a layered
medium, we have $\nu = y$ and $\zeta = \int_0^x 1/v(x') \di{x'}$, as in
1D).  

As in the 1D setting, we consider the first $n$ primary and dual
snapshots, namely $\ti{U}_k$ and $\ti{W}_k$, respectively ($k =
0,\ldots,n-1$).  The orthogonalized snapshots $\oU_j$ and $\oW_j$
(computed via an algorithm analogous to Algorithm~\ref{alg:algorithm_2})
will again be localized in some sense.  Moreover, we have $\Ghat_j =
(\oU_j^*\oU_j)^{-1}$ and $\G_j = (\oW_j^*\oW_j)^{-1}$ (where $^*$ is
defined as in \S~\ref{subsec:Galerkin_Ritz} with respect to an
appropriate inner product), so $\Ghat_j$ and $\G_j$ are symmetric,
positive-semidefinite matrices.  The matrices $\Ghat_j$, $\G_j$ may be
loosely interpreted to contain information about the local wave speed as
follows.  

As in \cite{Druskin:2010:OSF, Druskin:2010:OSF2}, we use the diagonals
of $\Ghat_j^{-1}$ and $\G_j$, denoted by $\ha{\la{\g}}_j \in
\mathbb{R}^{m}$ and $\la{\g}_j \in \mathbb{R}^{m}$, respectively, as the
analogues of $\ghat_j$ and $\g_j$ from \eqref{eqn:vel_inv}.  The
reasoning behind our use of the diagonals is twofold.  First, the set of
data matrices $\la{F}_k$ ($k = 0,\ldots,2n-1$) is effectively
three-dimensional, as is the set of $\Ghat_j$, $\G_j$.  Our problem is
overdetermined because we are trying to recover an approximation of $v$
on a two-dimensional grid.  Although we use the full data to compute
$\Ghat_j$ and $\Ghat$, we reduce the dimensionality by using only the
diagonals $\ha{\la{\g}}_j$ and $\la{\g}_j$ \cite{Druskin:2010:OSF2}.
Second, recall $\Ghat_j^{-1} = (\oU_j^*\oU_j)$.  Then
$\la{e}_i^T\la{\ghat}_j = \widehat{\Gamma}^{-1}_{j,ii} =
\left(\oU_j^i\right)^*\oU_j^i$.  Since the approximate operator $\tA$ in
\eqref{eqn:tA_2D} is of the same form as in the 1D case (see
\eqref{eqn:cauchyslow}), it seems reasonable to assume the quantities
$\la{e}_i^T\ha{\la{\g}}_j$ are related to localized averages of
$\frac{1}{\tv}$.  

Our algorithm thus proceeds as follows.  We consider a background
velocity $v^0$ with $v^0(0,y) = v(0,0)$ for $y \in [-\ymax,\ymax]$.  We
choose the velocity to be simple enough so the ray coordinate
transformation is well-posed; for example, we took $v^0 = v(0,0)$ in our
numerical experiments.  For a constant background velocity, ray
coordinates are particularly simple --- in fact, $\zeta =
\frac{1}{v_0}x$ and $\nu = y$.  

The grid points at which we approximate the true velocity are
$(\zeta^0_j, \nu^0_i)$ and $(\ha{\zeta}^0_j, \ha{\nu}^0_i)$, where, for
a constant background velocity $v^0$, $\zeta^0_j$ is computed as in the
1D case and $\nu^0_i = y^i$.  The dual grid points $(\ha{\zeta}^0_j,
\ha{\nu}^0_i)$ are defined in a similar manner.  The velocity is
approximated at the grid points (in ray coordinates) by 
\begin{equation}\label{eqn:vel_inv_2D}
    \tv(\zeta^0_j, \nu^0_i) \approx \tv^0(\zeta^0_j, \nu^0_i) 
        \frac{\la{e}_i^T\ha{\la{\g}}^0_j}{\la{e}_i^T\ha{\la{\g}}_j}.
    \eqand{and}
    \tv(\ha{\zeta}^0_j, \ha{\nu}^0_i) \approx 
        \tv^0(\ha{\zeta}^0_j, \ha{\nu}^0_i) 
        \frac{\la{e}_i^T\la{\g}_j}{\la{e}_i^T\la{\g}^0_j}.
\end{equation}
We may approximately convert the grid points $(\zeta^0_j, \nu^0_i)$ and
$(\ha{\zeta}^0_j, \ha{\nu}^0_i)$ to spatial coordinates $(x^0_j, y^0_i)$
and $(\ha{x}^0_j, \ha{y}^0_i)$, respectively, by inverting the
coordinate transform using our imaged velocity (much like in the 1D
case).

In Figure~\ref{fig:2D_inversion}, we plot the results of two numerical
experiments using our 2D inversion method.  In both cases, we used a
constant background velocity.  Figure~\ref{fig:2D_inversion}(a) is the
image of a block --- the true velocity corresponding to
Figure~\ref{fig:2D_inversion}(a) is plotted in
Figure~\ref{fig:2D_inversion}(b); Figure~\ref{fig:2D_inversion}(c) is
the image of a dipping interface --- the true velocity corresponding to
Figure~\ref{fig:2D_inversion}(c) is plotted in
Figure~\ref{fig:2D_inversion}(d).  In Figures~\ref{fig:2D_inversion}(a)
and (c), the horizontal axis is in slowness coordinates, while in
Figures~\ref{fig:2D_inversion}(b) and (d) the horizontal axis is in
physical coordinates.  They show qualitatively correct inversion results, even though  our
assumption on nonintersecting rays fails for the block  model. 

The above imaging procedure was further improved  of  by some of the
authors with preliminary results (including imaging of a 2D Marmousi
cross-section) reported in \cite{Mamonov:2015:NSI}.  
\begin{figure}[ht]
    \begin{center}
    \begin{tabular}{c c}
        \includegraphics[width=0.45\textwidth]{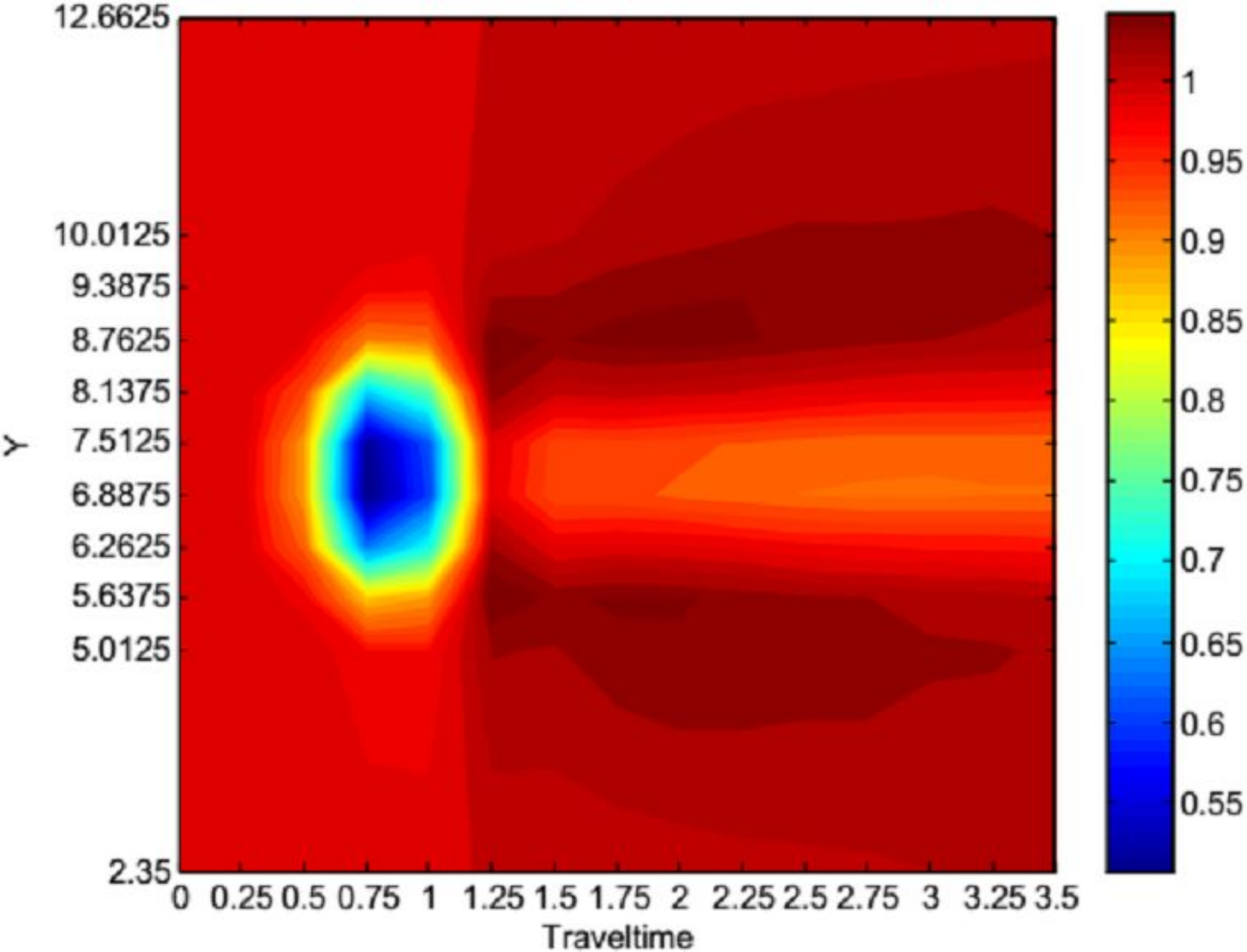} & 
	    \includegraphics[width=0.45\textwidth]{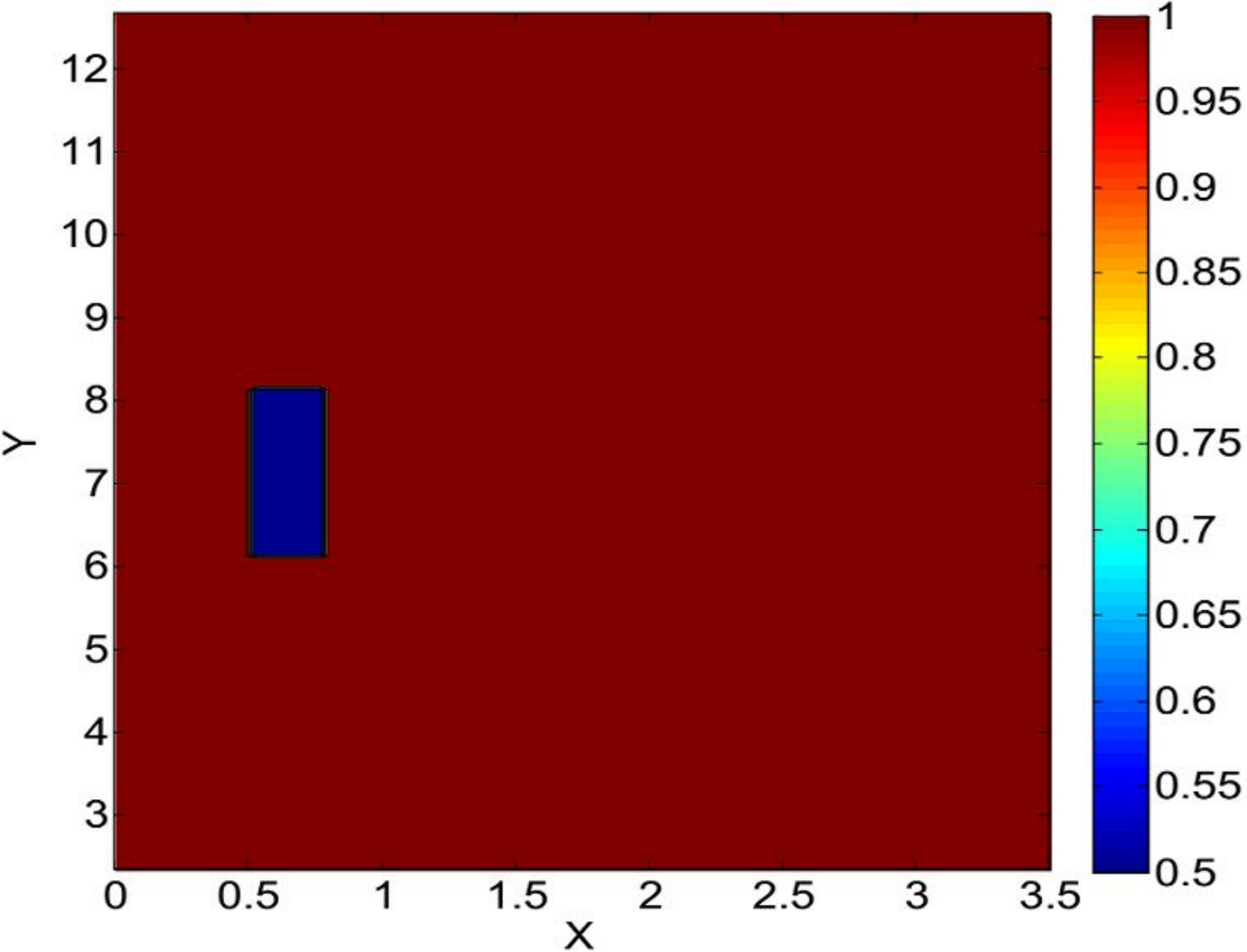}
        \\
	    (a) & (b) \\
        \includegraphics[width=0.45\textwidth]{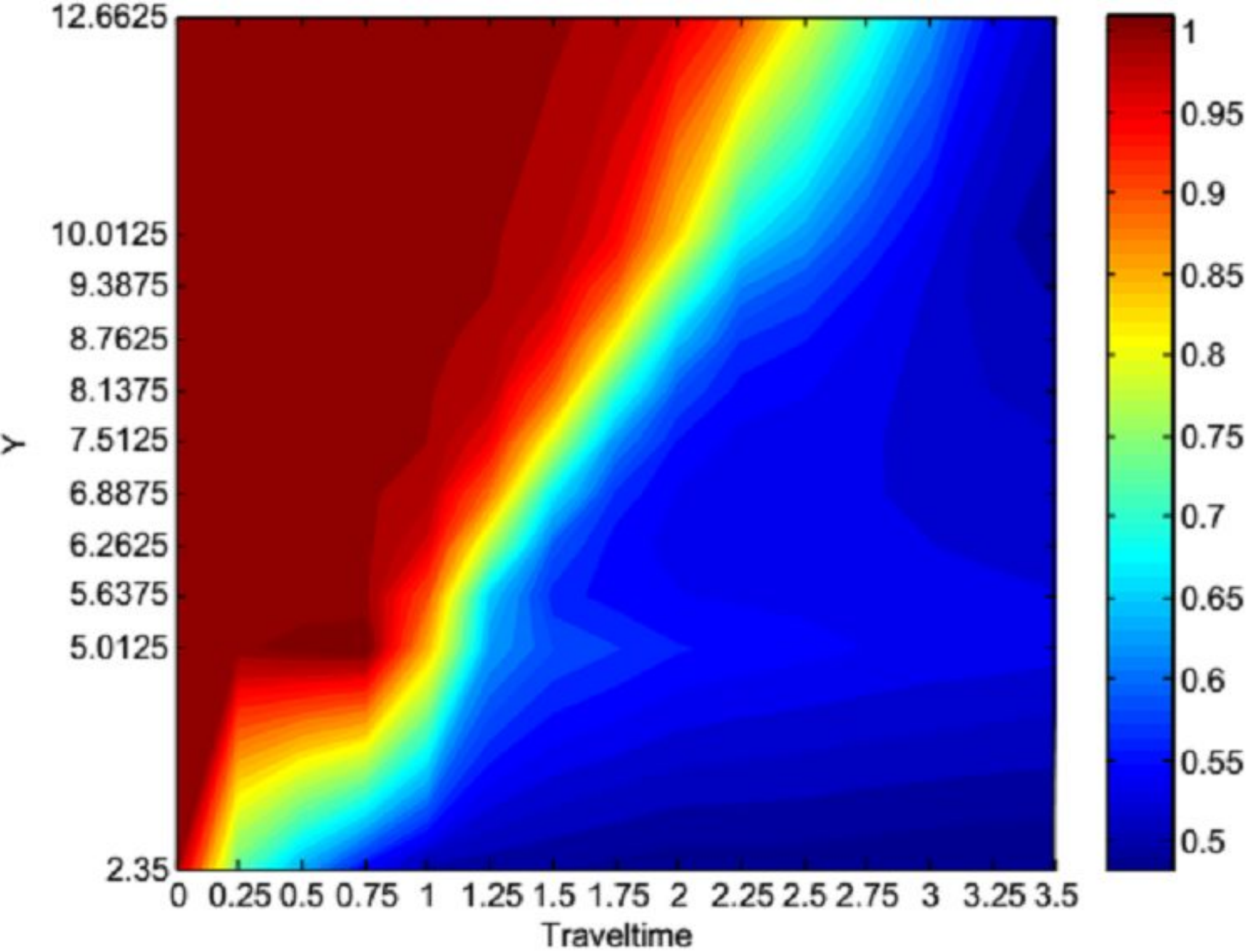}
        &
        \includegraphics[width=0.45\textwidth]{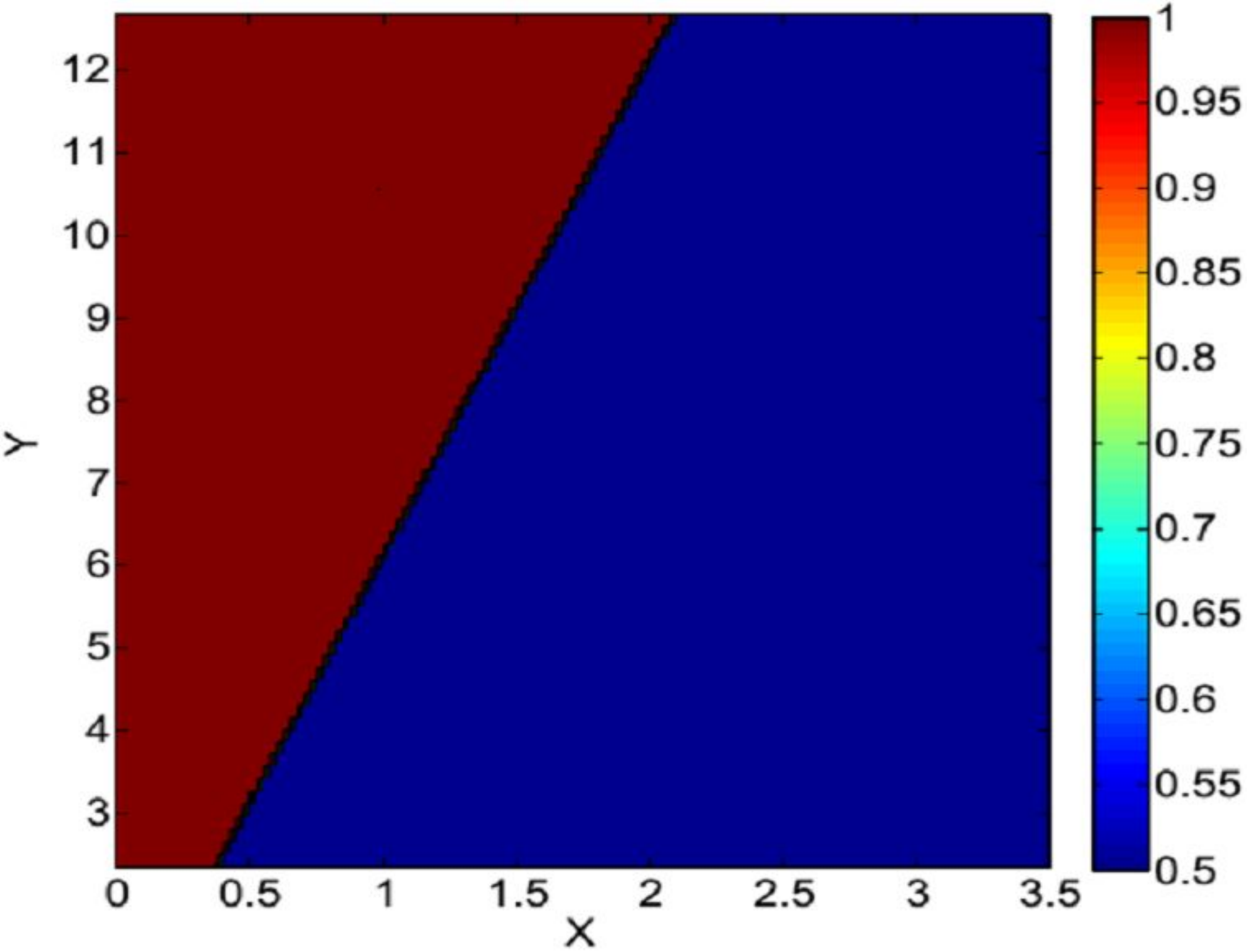}
    \end{tabular}
    \caption{In this figure, we plot the results of two numerical
    experiments using our 2D inversion method sketched in this section.
    The horizontal axis is in slowness coordinates for figures (a) and
    (c) while it is in true (physical) coordinates for figures (b) and
    (d).  We used a constant background velocity in both experiments:
    (a) image of a block inclusion; (b) true velocity corresponding to
    (a); (c) image of a dipping interface; (d) true velocity
    corresponding to (d).} \label{fig:2D_inversion}
    \end{center}
\end{figure}

\section{Conclusion}
We developed a model reduction framework for the solution of inverse
hyperbolic problems. This is a brief summary of our approach.
\begin{itemize}
    \item We start with a one-dimensional problem and
        single-input/single output (SISO) time-domain boundary
        measurements.
    \begin{itemize}
        \item We sample the data on a given temporal interval consistent
            with the Nyquist-Shannon theorem and construct the ROM
            interpolating the data at the sampling points. The ROM is
            obtained via the Chebyshev moment problem, which can be
            equivalently represented via Galerkin projection on the
            subspace of the wavefield snapshots, i.e., a Krylov subspace
            of the propagation operator.
        \item Using the Lanczos algorithm, we transform the projected
            system to a sparse form that mimics a finite-difference
            discretization of the underlying wave problem. This
            transformation is equivalent to Gram--Schmidt
            orthogonalization, and yields a localized orthogonal
            basis on the snapshot subspace.
        \item We estimate the unknown PDE coefficient via coefficients
            of the sparse system.  The coefficients of this sparse
            system are weighted averages of the true, unknown velocity,
            where the weight functions are localized (in particular,
            they are the squared orthogonalized snapshots).
        \item Numerical experiments show quantitatively good images of
            layered media, though the image quality depends on the
            consistency between the time-sampling and the pulse spectral
            content.
    \end{itemize} 
    \item We outline a generalization to the multidimensional setting
        (on a 2D example) with square multi-input/multi-output
        (MIMO) boundary data.
    \begin{itemize} 
        \item We construct the MIMO ROM data via the block-counterpart
            of the SISO algorithm. 
        \item The continuum interpretation of the MIMO ROM is done via
            geometrical optics. 
        \item Two-dimensional numerical experiments show that the
            imaging algorithm gives qualitatively correct results even
            when the geometric optics assumption does not hold.
    \end{itemize}
\end{itemize}

The key of the efficiency of the proposed approach is the weak
dependence of the orthogonalized snapshots on the media, which allows us
to use a single background Krylov basis for accurate Galerkin
projection. At the moment we only have experimental verification of that
phenomenon, and can conjecture a result similar to the asymptotic
independence of the optimal grids on variable coefficients
\cite{Borcea:2002:OFD}.  We believe that such a basis can also be found
for interpolatory model reduction in the frequency domain (via a
rational Krylov subspace), and investigation in this direction is under
way.

Another advantage of our proposed algorithm over traditional FWI is that
modeling errors are not an issue; because we use a homogeneous
background wavespeed, the background solution can be obtained
analytically.  We should mention, however, that our algorithm does not
perform well if a nonconstant background wavespeed that is not very
close to the true wavespeed is used.  Although this limits the ability
of our algorithm to be used iteratively, we believe our algorithm
performs very well either as a direct algorithm or as a nonlinear
preconditioner for generating an initial model for FWI
\cite{Borcea:2014:MRA}.  Additionally, in \S~\ref{sec:numerics}, we
discussed the stability of the 1D algorithm in the context of the choice
of the sampling stepsize $\tau$, which essentially plays the role of a
regularization parameter.

We must admit that the generalization to multidimensional problems is
still in its initial stage. The square MIMO formulation is
overdetermined; this gives rise to a multitude of different imaging
formulas, even though the equivalent state-variable ROM representation
is unique up to a change of basis. One such formula, outlined in
\cite{Mamonov:2015:NSI} (still based on the MIMO ROM construction
presented in this paper), apparently has sharper resolution than the
algorithm of \S~\ref{subsec:continuum_2D}.  We also discovered
some stability issues in the 2D case --- these will be addressed in a
forthcoming work.

Moreover, the collocated square MIMO formulation considered in this work
may not be suitable for some practically important measurement systems
in seismic exploration and other remote sensing applications. To
circumvent this deficiency, we are looking at the extension  of our
approach to non-collocated source-receiver arrays with a different
number of sources and receivers, which leads to rectangular MIMO
formulations within the Galerkin--Petrov projection framework. Another
possible extension is a back-scattering formulation used for radar
imaging, corresponding to one or a few diagonals of the square MIMO
matrix data set.


\section*{Acknowledgments}

The authors wish to thank Olga Podgornova and Fadil Santosa for helpful
discussions related to the topics presented in this paper. 


\appendix

\section{Proofs}\label{app:proofs}

In this appendix, we present some calculations and proofs we omitted in
the body of the paper.


\subsection{Derivation of \eqref{eqn:u_eta}} 
\label{subsec:derivation_of_eta}

We begin by recalling \eqref{eqn:ufun}:
\begin{equation}\label{eq:u_int_again}
    \ha{u}(x,k\tau) = 2\int_0^{\infty}\cos(k\tau s)\rho(x,s^2)s
        \ti{q}(s^2)\di{s}.
\end{equation}
We make the change of variables $y = \tau s$ in \eqref{eq:u_int_again}
to obtain
\begin{equation}\label{eq:u_int_y}
    \ha{u}(x,k\tau) = \frac{2}{\tau^2}\int_0^{\infty} \cos(k y)
    \rho(x,(y/\tau)^2) y 
    \ti{q}\left((y/\tau)^2\right)\di{y}.
\end{equation}
Henceforth we will take the principal branch of $\arccos$, namely
$\arccos : [-1,1] \mapsto [0,\pi]$. 

Next, we break the integral in \eqref{eq:u_int_y} into infinitely many
segments so we can apply an invertible change of coordinates of the form
$\mu = \cos y$ to each segment; in particular, we have
\begin{multline}\label{eqn:u_int_segments}
    \ha{u}(x,k\tau) 
    = \frac{2}{\tau^2}\ds\sum_{j=0}^{\infty}
        \int_{2j\pi}^{(2j+1)\pi} \cos(ky) 
        \rho\left(x,(y/\tau)^2\right) y 
        \ti{q}\left((y/\tau)^2\right)\di{y}
    \\
    + \frac{2}{\tau^2}\ds\sum_{j=1}^{\infty} 
        \int_{(2j-1)\pi}^{2j\pi} \cos(ky)
        \rho\left(x,(y/\tau)^2\right) y 
        \ti{q}\left((y/\tau)^2\right)\di{y}.
\end{multline}

We now make the following changes of variables in the first and
second integrals in \eqref{eqn:u_int_segments}, respectively:
\begin{subequations}\label{eqn:cov}
    \begin{equation}\label{eqn:cov_mu}
        \mu = \cos(y), \quad y = \arccos(\mu) + 2j\pi, \quad 
        dy = -\frac{1}{\sqrt{1-\mu^2}} d\mu;
    \end{equation}
    \begin{equation}\label{eqn:cov_nu}
        \nu = \cos(y), \quad y = -\arccos(\nu) + 2j\pi, \quad 
        dy = \frac{1}{\sqrt{1-\nu^2}} d\nu.
    \end{equation}
\end{subequations}
Using \eqref{eqn:cov_mu} and \eqref{eqn:cov_nu} in the first and second
integrals in \eqref{eqn:u_int_segments}, respectively, we obtain
\begin{equation*}
    \begin{aligned}
        \ha{u}(x,k\tau) 
        &= \frac{2}{\tau^2}\ds\sum_{j=0}^{\infty}
            \int_{1}^{-1} \cos(k(\arccos(\mu)+2j\pi)) 
            \rho\left(x,\left(\frac{\arccos(\mu)+2j\pi}{\tau}
    	\right)^2\right) \\
        &\qquad \cdot(\arccos(\mu)+2j\pi)
            \ti{q}\left(\left(\frac{\arccos(\mu)+2j\pi}{\tau}
    	\right)^2\right) \left(-\frac{1}{\sqrt{1-\mu^2}}\right) 
    	\di{\mu} \\
        &+  \frac{2}{\tau^2}\ds\sum_{j=1}^{\infty} 
        \int_{-1}^{1} \cos(k(-\arccos(\nu)+2j\pi))
            \rho\left(x,\left(\frac{-\arccos(\nu)+2j\pi}{\tau}
    	\right)^2\right) \\
        &\qquad \cdot(-\arccos(\nu)+2j\pi) 
            \ti{q}\left(\left(\frac{-\arccos(\nu)+2j\pi}{\tau}
    	\right)^2\right)\frac{1}{\sqrt{1-\nu^2}} \di{\nu}.
    \end{aligned}
\end{equation*}
We then use the $2\pi$-periodicity of cosine, transform $j\rightarrow
-j$ in the second sum, and use the definition of the Chebyshev
polynomials of the first kind to find 
\begin{equation*}
    \begin{aligned}
        \ha{u}(x,k\tau) 
        &= \frac{2}{\tau^2}\ds\sum_{j=0}^{\infty}
            \int_{-1}^{1} T_k(\mu) 
            \rho\left(x,\left(\frac{\arccos(\mu)+2j\pi}{\tau}
    	\right)^2\right) \\
        &\qquad \cdot(\arccos(\mu)+2j\pi)
            \ti{q}\left(\left(\frac{\arccos(\mu)+2j\pi}{\tau}
    	\right)^2\right) \frac{1}{\sqrt{1-\mu^2}} \di{\mu} \\
        &-  \frac{2}{\tau^2}\ds\sum_{j=-\infty}^{-1} 
        \int_{-1}^{1} T_k(\mu) 
            \rho\left(x,\left(\frac{\arccos(\mu)+2j\pi}{\tau}
    	\right)^2\right) \\
        &\qquad \cdot(\arccos(\mu)+2j\pi) 
            \ti{q}\left(\left(\frac{\arccos(\mu)+2j\pi}{\tau}
    	\right)^2\right)\frac{1}{\sqrt{1-\mu^2}} \di{\mu} \\
	&= \int_{-1}^1 T_k(\mu)\eta(x,\mu)\di{\mu}.
    \end{aligned}
\end{equation*}


\subsection{Support of $c^{-1}\eta_0(\mu)\di{\mu}$ for constant velocity}
\label{subsec:eta0_increase}

For $s \in [-1,1]$, let us define $N(s) \equiv \int_{-1}^s c^{-1}
\eta_0(\mu)\di{\mu}$.  Then the hypothesis of
Lemma~\ref{lem:points_of_increase} holds if $N$ has at least $n$
points of increase for $s \in [-1,1]$ (the set of all points of increase
for $N$ is also known as the support or spectrum of the measure
$c^{-1}\eta_0(\mu)\di{\mu}$ --- see, e.g., Chapter 1 of
\cite{Gautschi:2004:OPC}).  Here, we show that if the
wavespeed is constant, then $N$ has exactly
$n$ points of increase in $[-1,1]$; we provide a qualitative explanation
for the nonconstant wavespeed case at the end of the section.

Recall from \eqref{eqn:eta} and \eqref{eqn:rho} that 
\[
    \eta_0(\mu) = \eta(0,\mu) = \sum_{j=-\infty}^{\infty} r_j(\mu)
    \sum_{l=1}^{\infty} \delta\left(\frac{(\arccos(\mu) + 2\pi
    j)^2}{\tau^2}-\lambda_l\right)\frac{z_l(0)}{v(0)^2}z_l(x),
\]
where $(\lambda_l,z_l(x))$ is the $l^\textsuperscript{th}$ eigenpair of
$A$ and 
\[
    r_j(\mu) \equiv 2\mathrm{sgn}(j)
        \tq \left(\frac{(\arccos(\mu) +2j\pi)^2}{\tau^2} \right) 
        \frac{\arccos(\mu)+2j\pi}{\tau^2}  \frac{1}{\sqrt{1-\mu^2}}.
\]
Note that if $\tq$ is given by \eqref{eqn:gaussian_FT}, then $r_j > 0 $
for $\mu \in ]-1,1[$.

For simplicity, we take the wavespeed $v \equiv 1$ and $\xmax = 1$.
Recall that we take $2n$ measurements on the time interval $[0,T]$ at
the discrete times $k\tau$ for $k = 0,\ldots,2n-1$, where $(2n-1)\tau =
T$.  For the sake of illustration, let us take $\tau = 1/n$ for some
$n$; then $T = 2-\frac{1}{n}$ and the timestep $\tau$ is determined by
the number of snapshots we wish to take.

When $v \equiv 1$, the eigenfunctions of $A$ satisfy
\[
    -z_{l,xx} = \lambda_l z_l, \qquad
    z_{l,x}|_{x=0} = 0, \ z_l|_{x=1} = 0.
\]
Then the eigenvalues and (orthonormal) eigenfunctions are 
\begin{equation}\label{eqn:A_eigenfunctions}
    \lambda_l = \left[\frac{(2l-1)\pi}{2}\right]^2 \eqand{and}
    z_l(x) = \sqrt{2}\cos\left(\sqrt{\lambda_l}x\right), 
    \quad l = 1,2,\ldots.
\end{equation}
Reversing the arguments from \S~\ref{subsec:derivation_of_eta}, using
the fact that $\tau = 1/n$, and using the expressions in
\eqref{eqn:A_eigenfunctions} for the eigenfunctions of $A$ we find
\begin{equation}\label{eqn:Ns}
    \begin{aligned}
    N(s) &= 4c^{-1}\left[\sum_{j=0}^{\infty} 
            \int_{(\arccos(s) + 2\pi j)n}^{(2j+1)\pi n} 
            \tq(r^2) r \sum_{l=1}^{\infty} \delta(r^2-\lambda_l) \di{r}
            \right. \\
            &\left.\qquad +
            \sum_{j=1}^{\infty} \int_{(2j-1)\pi n}^{(2\pi j - \arccos
            (s))n}
            \tq(r^2) r \sum_{l=1}^{\infty} \delta(r^2-\lambda_l)
            \di{r}\right]. 
    \end{aligned}
\end{equation}
For a fixed value of $s \in [-1,1]$, only certain values of $\lambda_l$
will contribute to the above integrals.  In Figure~\ref{fig:eta_0_int},
we illustrate the first couple of integration intervals for the first
(second) integral in \eqref{eqn:Ns} in red (blue) for a given value of
$s$.  The square roots of the eigenvalues from
\eqref{eqn:A_eigenfunctions} are marked with crosses --- here $n = 6$.
As $s \rightarrow 1^-$, the intervals will increase in width until the
positive half of the real line is covered.
\begin{figure}[!hb]
    \centering
    \includegraphics[width=\textwidth]{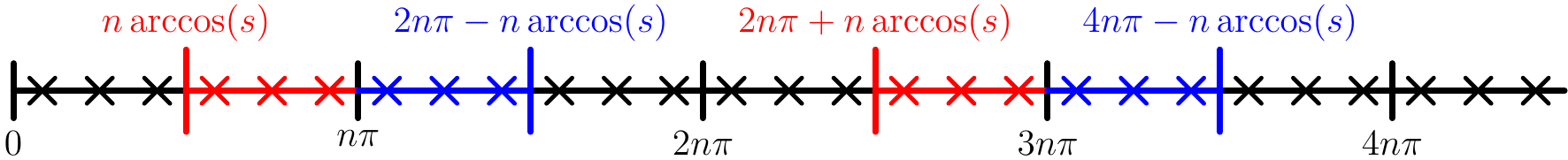}
    \caption{In this figure, we sketch the integration intervals for the
        integrals in \eqref{eqn:Ns} for a fixed value of $s \in [-1,1]$.
        The square roots of the eigenvalues of $A$, namely
        $\sqrt{\lambda_l}$, are marked with crosses.
        The red intervals in the figure correspond to the first two
        integration intervals for the first integral (i.e., for $j = 0$,
        $1$) while the blue intervals correspond to the first two
        integration intervals for the second integral (i.e., for $j =
        1$, $2$). We took $n = 6$ for this figure.}
    \label{fig:eta_0_int}
\end{figure}

From \eqref{eqn:A_eigenfunctions}, we see that $\sqrt{\lambda_l} \in
]0,n\pi[$ for $l = 1,\ldots,n$.  As $s$ increases from $-1$ to $1$ more
and more eigenvalues will be caught in the integration intervals for
the integrals in \eqref{eqn:Ns} and contribute to $N(s)$.  Since the
eigenvalues of $A$ are uniformly distributed on the positive real
line when $v$ is constant, each integration interval will contain
the same number of eigenvalues as every other integration interval
for every value of $s$, and each interval will contain at most $n$
eigenvalues.

For $i= 1,\ldots,n$, let $s_i \in [-1,1]$ be defined by
\begin{equation*}
    n\arccos(s_i) = \sqrt{\lambda_{n-(i-1)}} \Leftrightarrow s_i =
    \cos\left(\frac{\sqrt{\lambda_{n-(i-1)}}}{n}\right).
\end{equation*}
Using this and \eqref{eqn:Ns} we obtain
\begin{equation}\label{eqn:Ns_final}
    N(s) = \sum_{i=1}^n \alpha_iH(s-s_i),
\end{equation}
where $H$ is the Heaviside step function and 
\begin{multline*}
    \alpha_i = \sum_{j=0}^{\infty} \tq\left(\left(\sqrt{\lambda_{n-(i-1)}} +
    2\pi n j\right)^2\right)\left(\sqrt{\lambda_{n-(i-1)}} +
    2\pi n j\right) \\
    + \sum_{j=1}^{\infty} \tq\left(\left(\sqrt{\lambda_{n+i}} +
    2\pi n j\right)^2\right)\left(\sqrt{\lambda_{n+i}} +
    2\pi n j\right).
\end{multline*}
(For our choice of $\tq$ in \eqref{eqn:gaussian_FT}, both of the above
series converge.)  The upshot of \eqref{eqn:Ns_final} is that the $n$
points of increase of $N(s)$ are given by $s_i$.

Finally, if the wavespeed $v$ is not constant, then the eigenvalues of
$A$ will typically not be uniformly distributed along the positive real
line as in they are Figure~\ref{fig:eta_0_int} (when $v$ is constant).
In contrast, the most probable scenario is that the spectrum of $A$ is
irregularly distributed along the positive real line in which case the
summation in \eqref{eqn:Ns} gives an infinite number of points of
increase for $N(s)$.


\subsection{Proof of Lemma~\ref{lem:trans_to_slow}}
\label{subsec:trans_to_slow_proof}

Because the slowness coordinate transformation is given by
\eqref{eqn:slowness}, the chain rule implies
\[
    \der{}{u}{x} = \der{}{\tu}{\tx}\der{}{\tx}{x} 
        = \frac{1}{\tv} \der{}{\tu}{\tx}
    \eqand{and}
    \der{2}{u}{x} = \frac{1}{\tv}\der{}{}{\tx}\left(\frac{1}{\tv}
    \der{}{\tu}{\tx}\right).
\]
Using this and $\tu_{tt}(\tx,t) = u_{tt}(x,t)$ in \eqref{eqn:cauchys}
gives 
\[
    -\tv\der{}{}{\tx}\left(\frac{1}{\tv}\der{}{\tu}{\tx}\right) +
    \tu_{tt} = 0.
\]
The boundary conditions follow from the above calculations, the
identity $\tx(0) = 0$, and the definition $\tx(\xmax) = \txmax$.  

The initial condition $\tu_t|_{t=0} = 0$ holds for $\tu$ since we are
not making any coordinate transformations in time.  The derivation of
$\tb$ requires some care.  First, we note that if $u$, $w \in
L^2[0,\xmax]$, then $\tu(\tx) = u(x(\tx))$ and $\tw(\tx) = w(x(\tx)) \in
L^2[0,\txmax]$ and
\begin{equation}\label{eqn:ip_to_slowness}
    \oovs{u}{w} = \ootv{\tu}{\tw}.
\end{equation}
In terms of distributions, for functions $h$ that are (right) continuous
at $x = 0$, we have
\begin{equation}\label{eqn:delta_spatial}
    \oovs{\delta(x+0)}{h} = \frac{h(0)}{v^2(0)}.
\end{equation}
In light of \eqref{eqn:ip_to_slowness} and \eqref{eqn:delta_spatial},
the transformation of the distribution $\delta(x+0)$ to slowness
coordinates, denoted $\ti{\delta}(\tx+0)$ (since $\tx(0) = 0$), should
satisfy
\[
    \ootv{\ti{\delta}(\tx+0)}{\ti{h}} = \frac{h(0)}{v^2(0)}.
\]
We take $\ti{\delta}(\tx+0) = \frac{1}{\tv(0)}\delta(\tx+0)$; then 
\[
    \ootv{\ti{\delta}(\tx+0)}{\ti{h}} =
    \ootv{\frac{1}{\tv(0)}\delta(\tx+0)}{\ti{h}} = 
    \frac{\ti{h}(0)}{\tv^2(0)} = \frac{h(0)}{v^2(0)}
\]
because $\tx(0) = 0$.  Thus $b = v(0)\tq(A)^{1/2}\delta(x+0)$
transforms to $\tb = \tq(\tA)^{1/2}\delta(\tx+0)$.  

Finally, $\tA$ is self adjoint and positive definite with respect to
$\ootv{\cdot}{\cdot}$ thanks to \eqref{eqn:ip_to_slowness} and the facts
that $A$ is self adjoint and positive definite with respect to
$\oovs{\cdot}{\cdot}$.  


\subsection{Proof of Lemma~\ref{lem:1stc}}\label{subsec:1stc_proof}

Suppose $\tu$ and $\tw$ solve \eqref{eqn:1stc}.  We prove that $\tu$
solves \eqref{eqn:cauchyslow}; the proof that $\tw$ solves
\eqref{eqn:cauchyw} is similar.  

We differentiate the first PDE in \eqref{eqn:1stc} with respect to $t$
and the second with respect to $\tx$ and subtract the results to find
\[
    \frac{1}{\tv}\tu_{tt} - \left(\frac{1}{\tv}\tu_{\tx}\right)_{\tx}
        = \tw_{\tx t} - \tw_{t \tx} = 0.
\]
Multiplying both sides of the above identity by $\tv$ gives $\tA\tu +
\tu_{tt} = 0$, as in \eqref{eqn:cauchyslow}.  

The boundary condition $\tu|_{\tx = \txmax} = 0$ follows immediately
from \eqref{eqn:1stc}; we differentiate the boundary condition
$\tw|_{\tx = 0}$ with respect to $t$ and use the second PDE in
\eqref{eqn:1stc} to find $0 = \tw_t|_{\tx = 0} =
\left(\frac{1}{\tv}\tu_{\tx}\right)|_{\tx = 0}$, which implies
$\tu_{\tx}|_{\tx = 0} = 0$.  We follow a similar procedure for the
initial conditions; $\tu|_{t=0} = \tb$ is trivial.  We differentiate the
initial condition $\tw|_{t=0} = 0$ with respect to $\tx$ and use the
first PDE in \eqref{eqn:1stc} to find $0 = \tw_{\tx}|_{t=0} =
\left(\frac{1}{\tv}\tu_t\right)|_{t=0}$, so $\tu_t|_{t=0} = 0$.  


\subsection{Proof of Lemma~\ref{lem:snapshots_uw}}
\label{subsec:snapshots_uw_proof}

We have already essentially proved the first part of this lemma (see
\eqref{eqn:snapshots_def} and \eqref{eqn:tstepping}).

To prove the second part of the lemma, we begin by noting that the
solution to \eqref{eqn:cauchyw} is 
\begin{equation*}
    \tw(\tx,t) = \sin\left(t\sqrt{\tC}\right)
    \tC^{-1/2}\frac{1}{\tv}\der{}{\tb}{\tx}.
\end{equation*}
Then Definition~\ref{def:snapshots_pd} implies
\begin{equation}\label{eqn:wtilde_sin}
    \tw_k = \tw(\tx,(k+0.5)\tau) =
    \sin\left((k+0.5)\tau\sqrt{\tC}\right)\tC^{-1/2}\frac{1}{\tv}
    \der{}{\tb}{\tx},
\end{equation}
and, in particular,
\begin{equation*}
    \tw_0 = \tw(\tx,0.5\tau) = \sin\left(0.5\tau\sqrt{\tC}\right)
    \tC^{-1/2}\frac{1}{\tv}\der{}{\tb}{\tx}.
\end{equation*}

Thus we need to show \eqref{eqn:dual_formula} and \eqref{eqn:wtilde_sin}
are equivalent, i.e., 
\begin{multline*}
    \left[\Ts_k\left(\cos\left(\tau\sqrt{\ti{P}_C}\right)\right) + 
        \Ts_{k-1}\left(\cos\left(\tau\sqrt{\ti{P}_C}\right)\right)
        \right]\sin\left(0.5\tau\sqrt{\ti{P}_C}\right) 
        \tC^{-1/2}\frac{1}{\tv}\der{}{\tb}{\tx} \\ = 
    \sin\left((k+0.5)\tau\sqrt{\tC}\right)\tC^{-1/2}\frac{1}{\tv}
        \der{}{\tb}{\tx}.
\end{multline*}
This means we must prove
\begin{equation}\label{eqn:sin_identity}
    \left[\Ts_k(\cos x) + \Ts_{k-1}(\cos x)\right]\sin(0.5x) = 
        \sin((k+0.5)x).
\end{equation}

The well-known identities 
\begin{equation}\label{eqn:Chebyshev_identities}
    T_j^{(2)}(x) = 2\ds\sum_{\substack{k=1 \\ k \ \mathrm{odd}}}^j
    T_k(x) \quad (j \ \mathrm{odd}) \eqand{and}
    T_j^{(2)}(x) = 2\ds\sum_{\substack{k=0 \\ k \ \mathrm{even}}}^j
    T_k(x) - 1 \quad (j \ \mathrm{even}),
\end{equation}
together with $T_j(\cos(x)) = \cos(jx)$, imply \eqref{eqn:sin_identity}
is equivalent to 
\begin{equation}\label{eqn:induction_hypothesis}
    \left[2\ds\sum_{\substack{j=1 \\ j \ \mathrm{odd}}}^k  \cos(jx) + 
        2\ds\sum_{\substack{j=0 \\ j \ \mathrm{even}}}^k \cos(jx) -
        1\right]\sin(0.5x)
    = \sin((k+0.5)x).
\end{equation}
We will use induction to prove \eqref{eqn:induction_hypothesis} is an
identity.  The case $k = 0$ follows immediately.  For the induction
step, suppose \eqref{eqn:induction_hypothesis} holds; we will prove it
also holds with $k$ replaced by $k+1$.  

We have
\begin{align*}
    \sin((k+1.5)x) 
    &= \sin((k+1)x)\cos(0.5x) + \cos((k+1)x)\sin(0.5x) \\
    &= 0.5\left[\sin((k+1.5)x) + \sin((k+0.5)x)\right] 
        + \cos((k+1)x)\sin(0.5x);
\end{align*}
solving the above equation for $\sin((k+1.5)x)$ yields
\[
    \sin((k+1.5)x) = \sin((k+0.5)x) + 2\cos((k+1)x)\sin(0.5x).
\]
This and the induction hypothesis \eqref{eqn:induction_hypothesis} imply
\begin{equation*}
    \sin((k+1.5)x) 
    = \left[2\ds\sum_{\substack{j=1 \\ j \ \mathrm{odd}}}^{k+1}\cos(jx)
        + 2\ds\sum_{\substack{j=0 \\ j \ \mathrm{even}}}^{k+1}\cos(jx)
        -1\right]\sin(0.5x),
\end{equation*}
as required.

Finally, the recursion \eqref{eqn:dual_recursion} follows from
\eqref{eqn:dual_formula} because the Chebyshev polynomials satisfy
$\Ts_{k+1}(x) = 2x\Ts_k(x) - \Ts_{k-1}(x)$ and (where all Chebyshev
polynomials are evaluated at $\ti{P}_C$)
\begin{align*}
    \frac{\tw_{k+1} - 2\tw_k + \tw_{k-1}}{\tau^2} 
    &= \frac{\left\{\Ts_{k+1} + \Ts_k - 
        2\left[\Ts_{k} + \Ts_{k-1}\right] + \Ts_{k-1}+\Ts_{k-2}\right\}
        \tw_0}{\tau^2} \\
    &= \frac{\left[\Ts_{k+1} - \Ts_k - \Ts_{k-1}+\Ts_{k-2}\right]\tw_0}
        {\tau^2} \\
    &= \frac{\left[2\ti{P}_C\Ts_k - \Ts_{k-1} - \Ts_k - \Ts_{k-1} 
        - \Ts_k + 2\ti{P}_C\Ts_{k-1} \right]\tw_0}{\tau^2} \\
    &= -\frac{2}{\tau^2}\left(I - \ti{P}_C\right)
        \left[\Ts_k + \Ts_{k-1} \right]\tw_0 \\
    &= \xi\left(\ti{P}_C\right)\tw_k.
\end{align*}
The initial condition $\tw_0 + \tw_{-1} = 0$ can be derived from
\eqref{eqn:dual_formula}:
\[
    \tw_0 + \tw_{-1} = \left[\Ts_0\left(\ti{P}_C\right) 
        + \Ts_{-1}\left(\ti{P}_C\right)\right]\tw_0
        + \left[\Ts_{-1}\left(\ti{P}_C\right) 
        + \Ts_{-2}\left(\ti{P}_C\right)\right]\tw_0 = 0
\]
because $\Ts_0 = 1$, $\Ts_{-1} = 0$, and $\Ts_{-2} = -1$.  


\subsection{Proof of Lemma~\ref{lem:leapfrog}}\label{subsec:leapfrog}

In order to avoid getting too involved in technical details, we present a
proof of Lemma~\ref{lem:leapfrog} in a discrete setting.  In particular,
we discretize the differential operators involved in the proof using
finite differences.  This allows us to circumvent the technicalities
involved in specifying the domains of the differential operators in
question, although, as we will see, the discrete operators still retain
information about these domains.  Moreover, this proof highlights many
of the details of numerical simulations.  

We discretize on a staggered grid, illustrated in 
Figure~\ref{fig:grid}.  The $m+1$ ``primary'' nodes
$\{\ti{x}^j\}_{j=1}^{m+1}$ are indicated by the symbol $\circ$ and the
$m+1$ ``dual'' nodes $\{\ha{x}^j\}_{j=0}^{m}$ are indicated by the
symbol $\times$.  We take $m \gg 1$ to ensure that the continuous
operators are well approximated by the discrete operators.  In practice,
we use a uniform grid with $\ti{h}_j = h$ for $j = 1,\ldots,m$,
$\ha{h}_1 = h/2$, and $\ha{h}_j = h$ for $j = 2,\ldots,m$.  However, it
is convenient for our purposes to keep the grid steps arbitrary for now
(as long as the primary and dual grid points alternate).
\begin{figure}[!hb]
    \centering
    \includegraphics[width=\textwidth]{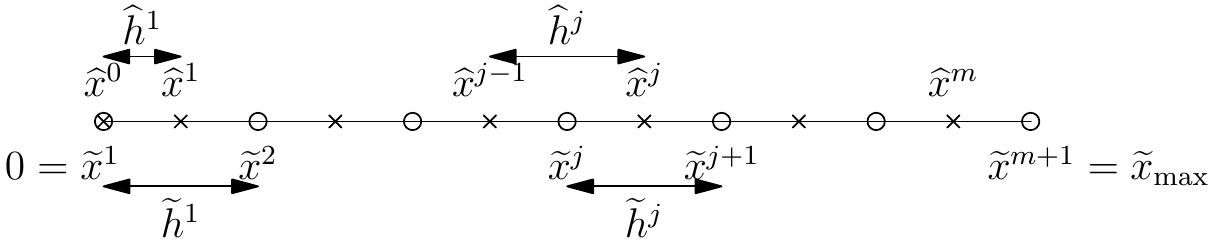}
    \caption{In this figure, we sketch the staggered grid we use to
    construct finite-difference approximations of differential
    operators.  The ``primary'' nodes $\{\ti{x}^j\}_{j=1}^{m+1}$ are
    indicated by the symbol $\circ$ and the ``dual'' nodes
    $\{\ha{x}^j\}_{j=0}^{m}$ are indicated by the symbol $\times$.}
    \label{fig:grid}
\end{figure}

Recall that the operator $\tA$ is defined by
\begin{equation}\label{eqn:tA_reminder}
    \tA\tu = -\tv\der{}{}{\tx}
    \left(\frac{1}{\tv}\der{}{\tu}{\tx}\right), \eqand{where}
    \tu_{\tx}|_{\tx = 0} = 0 \eqand{and} \tu|_{\tx = \txmax} = 0.
\end{equation}
Using centered differences, we discretize $\tu$ on the
primary nodes and $\tu_{\tx}$ on the dual nodes to obtain
\cite{Druskin:1999:GSR}
\begin{equation}\label{eqn:discrete_tA}
    \begin{aligned}
    \tA\tu(\tx^j) 
    &\approx 
    -\frac{\tv^j}{\ha{h}^{j}} 
        \left[\frac{1}{\ha{v}^j}\der{}{\tu}{\tx}(\ha{x}^{j}) - 
	\frac{1}{\ha{v}^{j-1}}\der{}{\tu}{\tx}(\ha{x}^{j-1})\right] \\ 
    &\approx 
    -\frac{\tv^j}{\ha{h}^{j}}
    \left[\left(\frac{\tu^{j+1} - \tu^j} 
        {\ha{v}^{j}\ti{h}^{j}}\right)-
	\left(\frac{\tu^j - \tu^{j-1}}{\ha{v}^{j-1}\ti{h}^{j-1}}
	\right)\right] 
	\eqfor j = 2,\ldots,m,
    \end{aligned}
\end{equation}
where $\tu^j = \tu\left(\tx^j\right)$ for $j = 1,\ldots,m+1$, $\tv^j$ is
an approximation to $\tv\left(\tx^j\right)$ for $j = 1,\ldots,m+1$, and
$\ha{v}^j$ is an approximation to $\tv\left(\ha{x}^j\right)$ for $j =
0,\ldots,m$.  For example, if $\tv$ is continuous, we may take $\tv^j =
\tv\left(\tx^j\right)$ and $\ha{v}^j = \tv\left(\ha{x}^j\right)$.  If
$\tv$ is not continuous, we may follow \cite{Borcea:2002:OFD} and take
\[
    \frac{1}{\tv^j} \equiv \frac{1}{\ha{h}^j}
        \int_{\ha{x}^{j-1}}^{\ha{x}^j}\frac{1}{\tv(\tx)} \di{\tx}
\] 
(so $\tv^j$ is the harmonic mean of $\tv$ on $\left(\ha{x}^{j-1},
\ha{x}^j\right)$) and 
\[
    \ha{v}^j \equiv \frac{1}{\ti{h}^j} \int_{\ti{x}^j}^{\ti{x}^{j+1}} 
        \tv(\tx) \di{\tx}
\] 
(so $\ha{v}^j$ is the arithmetic mean of $\tv$ on $\left(\ti{x}^j,
\ti{x}^{j+1}\right)$).

We discretize the Dirichlet boundary condition $\tu|_{\txmax} = 0$ by
setting $\tu^{m+1} = 0$.  To handle the Neumann boundary condition at
$\tx = 0$, we introduce a ``ghost node'' at $\tx^{0} = -\ti{h}^0$.
Then, for $j = 1$, \eqref{eqn:discrete_tA} is
\[
    \tA\tu(\tx^1) \approx -\frac{\tv^1}{\ha{h}^1}
    \left[\left(\frac{\tu^2 - \tu^1} 
        {\ha{v}^1\ti{h}^1}\right)-
	\left(\frac{\tu^1 - \tu^{0}}{\ha{v}^0\ti{h}^0}\right)\right].
\]
We discretize the Neumann boundary condition $\tu_{\tx}|_{\tx=0}=0$ by
setting \footnote{For smooth $\tv$ and uniform grid steps $\ti{h}^j = h$
for $j = 1,\ldots,m$, $\ha{h}^1 = h/2$, and $\ha{h}_j = h$ for $j =
2,\ldots,m$, \eqref{eqn:discrete_tA} is an $O(h^2)$ approximation of
$\tA$.  An equivalent formulation arises by taking $\ha{x}^0 =
-\ha{h}^1/2$ (instead of $\ha{x}^0 = 0$) and discretizing the
Neumann boundary condition by $\partial \tu/\partial\tx(\ha{x}^0) +
\partial\tu/\partial\tx(\ha{x}^1) \approx 0$, which, in the uniform grid
case, is an $O(h^2)$ approximation to $\partial\tu/\partial\tx(0) = 0$.}
\begin{equation}\label{eqn:discrete_Neumann}
    \frac{\tu^1 - \tu^{0}}{\ha{v}^0\ti{h}^0} = 0.
\end{equation}
In summary, we define $\btu = \left[\tu^1, \ldots, \tu^m\right]^T \in
\mathbb{R}^m$ (where we have implicitly taken $\tu^{m+1} = 0$); then
$\tA\tu(\tx^j) \approx (\btA\btu)^j$ for $j = 1,\ldots,m$, 
where we define the following matrices in $\mathbb{R}^{m\times m}$: 
\begin{equation}\label{eqn:matrices}
\begin{aligned}
    \btA = \ha{\la{R}}\ti{\la{S}}, \quad 
    \ha{\la{R}} &\equiv \ti{\la{V}}\ha{\la{\Delta}}, \quad
    \ti{\la{S}} \equiv \ha{\la{V}}^{-1}\ti{\la{\Delta}}, \quad
    \ti{\la{V}} \equiv \diag(\tv^1,\ldots,\tv^m), \quad
    \ha{\la{V}} \equiv \diag(\ha{v}^1,\ldots,\ha{v}^m), \\
    &\ti{\la{\Delta}} \equiv \diag
        (1/\ti{h}^1,\ldots,1/\ti{h}^m)\la{T}, \quad
    \ha{\la{\Delta}} \equiv \diag
        (1/\ha{h}^1,\ldots,1/\ha{h}^m)\la{T}^T, 
\end{aligned}
\end{equation}
and $\la{T}$ is the $m\times m$ Toeplitz matrix with $1$ on the main
diagonal, $-1$ on the subdiagonal, and $0$ elsewhere.
Finally, $\btA$ is self adjoint and positive definite with respect
to the inner product
\[
    \left\langle \ti{\la{f}}, \ti{\la{g}} \right\rangle_{\ha{h}/\tv}
    \equiv \sum_{j=1}^m \ti{f}^j \ti{g}^j \frac{\ha{h}^j}{\tv^j};
\]
if $\ti{\la{f}}$ and $\ti{\la{g}}$ are viewed as primary-grid
discretizations of functions $\ti{f}$ and $\ti{g}$ satisfying the
boundary conditions in \eqref{eqn:tA_reminder}, then this discrete inner
product is the midpoint-rule approximation of the inner product
$\ootv{\cdot}{\cdot}$.  

Here and throughout the remainder of this section, bold, lowercase Latin
letters adorned with $\widetilde{\phantom{\la{u}}}$ or
$\widehat{\phantom{\la{u}}}$ denote vectors in
$\mathbb{R}^m$ that correspond to discretizations of functions on the
primary grid or dual grid, respectively.  In particular, the
discretized versions of the primary and dual snapshots are denoted by
\begin{align*}
    \btu_k &\equiv \left[\tu_k^1,\ldots,\tu_k^m\right]^T \equiv
        \left[\tu_k(\tx_1),\ldots,\tu_k(\tx_m)\right]^T \\
	\intertext{and}
    \bhw_k &\equiv \left[\hw_k^1,\ldots,\hw_k^m\right]^T \equiv
        \left[\tw_k(\hx_1),\ldots,\tw_k(\hx_m)\right]^T,
\end{align*}
respectively.  Similarly, bold, uppercase Greek or Latin letters adorned
with $\widetilde{\phantom{\la{u}}}$ or $\widehat{\phantom{\la{u}}}$
denote $m\times m$ matrices that act on functions discretized on the
primary and dual grids, respectively.  For example, let us consider the
matrix $\ti{\la{S}} = \ha{\la{V}}^{-1}\ti{\la{\Delta}}$.  The matrix
$\ti{\la{\Delta}}$ acts on the $k\textsuperscript{th}$ discretized
snapshot $\btu_k$ to produce the vector $\ti{\la{\Delta}}\btu_k$, which
is an approximation of $\frac{\partial \tu_k}{\partial \tx}$ on the
\emph{dual grid} (because, as discussed above, we discretize
$\frac{\partial\tu}{\partial\tx}$ on the dual grid).  Since vector
$\ti{\la{\Delta}}\btu_k$ is a discretization of a function on the dual
grid, it can be acted on by the matrix $\ha{\la{V}}^{-1}$.  In summary,
matrices with $\ti{\phantom{u}}$ (respectively, $\ha{\phantom{u}}$) act
on vectors with $\ti{\phantom{u}}$ (respectively, $\ha{\phantom{u}}$);
this notation allows us to retain
information about the domains of the continuous differential operators
in the discrete setting.  \footnote{Since all of the vectors we consider
are in $\mathbb{R}^m$ and all of the matrices are in
$\mathbb{R}^{m\times m}$, we are allowed to intermix notations in
matrix-vector multiplication, e.g., $\ha{\la{\Delta}}\btu_k$ is well
defined in a linear-algebraic sense; however, we are viewing the
matrices and vectors as discretizations of differential operators and
functions, respectively, on certain grids, so it is important to
distinguish between those defined on the primary grid versus those
defined on the dual grid.}  

We now focus on the discretization of the dual operator $\tC$: 
\begin{equation}\label{eqn:tC_reminder}
    \tC\tw =
    -\frac{1}{\tv}\der{}{}{\tx}\left(\tv\der{}{\tw}{\tx}\right),
    \eqand{where} \tw|_{\tx = 0} = 0 \eqand{and} \tw_{\tx}|_{\tx =
    \txmax} = 0.
\end{equation}
For $j = 0,\ldots,m$, we denote $\hw^j \equiv \tw(\hx^j)$.  Analogously
to what we did before, we discretize $\tw$ on the dual nodes and
$\tw_{\tx}$ on the primary nodes to arrive at
\begin{equation}\label{eqn:discrete_tC}
    \begin{aligned}
    \tC\tw(\hx^j) 
    &\approx -\frac{1}{\ha{v}^j\ti{h}^j}
        \left[\ti{v}^{j+1}\der{}{\tw}{\tx}(\tx^{j+1}) 
	- \ti{v}^j\der{}{\tw}{\tx}(\tx^j)\right] \\
    &\approx -\frac{1}{\ha{v}^j\ti{h}^j} 
    \left[\ti{v}^{j+1}\left(\frac{\hw^{j+1} - \hw^j}{\ha{h}^{j+1}}
    \right) - \ti{v}^j\left(\frac{\hw^j - \hw^{j-1}}{\ha{h}^j}\right)
	\right] 
    \eqfor j = 1,\ldots,m-1.
    \end{aligned}
\end{equation}
The Dirichlet boundary condition at $\tx = 0$ is discretized by $\hw_0 =
\tw(\hx^0) = \tw(0) = 0$, while the Neumann boundary condition is
discretized by introducing a ghost node $\hx^{m+1} = \hx^m +
\ha{h}^{m+1}$ and taking 
\[
    \ha{v}^{m+1}\left(\frac{\tw^{m+1} -\tw^m}{\ti{h}^{m+1}}\right) = 0.
\]
Then $\tC\tw(\hx^j) \approx (\bhC\bhw)^j$ for $j = 1,\ldots,m$, where 
\begin{equation}\label{eqn:bhC}
    \bhC \equiv \ti{\la{S}}\ha{\la{R}}. 
\end{equation}
Note $\bhC$ is self adjoint and positive definite with respect to the
inner product
\[
    \left\langle \ha{\la{f}}, \ha{\la{g}}\right \rangle_{\ti{h}\ha{v}}
    \equiv \sum_{j=1}^m \ha{f}^j\ha{g}^j\ti{h}^j\ha{v}^j;
\]
if $\ha{\la{f}}$ and $\ha{\la{g}}$ are viewed as dual-grid
discretizations of functions $\ha{f}$ and $\ha{g}$ satisfying the
boundary conditions in \eqref{eqn:tC_reminder}, then this discrete
inner product is the midpoint-rule approximation of the inner product
$\iptv{\cdot}{\cdot}$.  

From \eqref{eqn:matrices} and \eqref{eqn:bhC}, we find $\btA$ and $\bhC$
are similar; in particular
\begin{equation}\label{eqn:AC_similar}
    \btA = \ti{\la{S}}^{-1}\bhC\ti{\la{S}} \eqand{and}
    \btA = \ha{\la{R}}\bhC\ha{\la{R}}^{-1}.  
\end{equation}
(This is the only place in the proof where our notation does not work
perfectly --- in particular, $\ti{\la{S}}^{-1}$ acts on dual-grid
vectors while $\ha{\la{R}}^{-1}$ acts on primary-grid vectors.)
From this we obtain the following identities, which prove useful in
forthcoming calculations:
\begin{equation}\label{eqn:AtoC}
    \begin{aligned}
        \ti{\la{S}}\btA^{-1/2}\sin\left(0.5\tau\sqrt{\btA}\right) &=
            \sin\left(0.5\tau\sqrt{\bhC}\right)\bhC^{-1/2}\ti{\la{S}}
            \myspace
        \sin\left(0.5\tau\sqrt{\btA}\right)\btA^{-1/2}\ha{\la{R}} &= 
            \ha{\la{R}}\bhC^{-1/2}\sin\left(0.5\tau\sqrt{\bhC}\right).
    \end{aligned}
\end{equation}
We will prove the first of these identities --- the second identity
can be proved analogously.  We have
\begin{align*}
    \ti{\la{S}}\btA^{-1/2}\sin\left(0.5\tau\sqrt{\btA}\right)
    &= \ti{\la{S}}\sum_{j=0}^{\infty} \frac{(\tau/2)^{2j+1}}{(2j+1)!}
        \btA^j\\
    &= \ti{\la{S}}\sum_{j=0}^{\infty} \frac{(\tau/2)^{2j+1}}{(2j+1)!}
    \ti{\la{S}}^{-1}\bhC^j\ti{\la{S}} \\
    &= \sum_{j=0}^{\infty} \frac{(\tau/2)^{2j+1}}{(2j+1)!}
    \left(\bhC^{1/2}\right)^{2j+1}\bhC^{-1/2}\ti{\la{S}} \\
    &= \sin\left(0.5\tau\sqrt{\bhC}\right)\bhC^{-1/2}\ti{\la{S}}.
\end{align*}

Next, we define the matrices
\begin{equation}\label{eqn:Lambdas}
    \ti{\la{\Lambda}}_{\tau} \equiv \frac{2}{\tau}\ti{\la{\Delta}}
        \btA^{-1/2}\sin\left(0.5\tau\sqrt{\btA}\right) 
    \eqand{and}
    \ha{\la{\Lambda}}_{\tau}^T \equiv
        \frac{2}{\tau}\ti{\la{V}}^{-1}
	\sin\left(0.5\tau\sqrt{\btA}\right)\btA^{-1/2}\ti{\la{V}}
	\ha{\la{\Delta}};
\end{equation}
$\ti{\la{\Lambda}}_{\tau}$ and $\ha{\la{\Lambda}}_{\tau}^T$ are discrete
approximations of $\Lt$ and $\Lt^T$, respectively.  We consider the
following discrete approximation to \eqref{eqn:tstepping1st}:
\begin{equation}\label{eqn:tstepping1st_discrete}
    \begin{cases}
        \dfrac{\bhw_k - \bhw_{k-1}}{\tau} = 
	    \ha{\la{V}}^{-1}\ti{\la{\Lambda}}_{\tau}\btu_k 
	    &\text{for } k = 0,\ldots,2n-1, \myspace
	\dfrac{\btu_{k+1} - \btu_{k}}{\tau} = 
	    -\ti{\la{V}}\ha{\la{\Lambda}}_{\tau}^T\bhw_k 
	    &\text{for } k = 0,\ldots,2n-2, \myspace
	\btu_0 = \ti{\la{b}}, \ \bhw_0 + \bhw_{-1} = 0.
    \end{cases}
\end{equation}
Applying $-\ti{\la{V}}\ha{\la{\Lambda}}_{\tau}^T$ to the first equation
in \eqref{eqn:tstepping1st_discrete} and simplifying the result via the
second equation in \eqref{eqn:tstepping1st_discrete} gives
\begin{equation}\label{eqn:almost_primary}
    \frac{\btu_{k+1} - 2\btu_k + \btu_{k-1}}{\tau^2} 
    = -\ti{\la{V}}\ha{\la{\Lambda}}_{\tau}^T\ha{\la{V}}^{-1}
    \ti{\la{\Lambda}}_{\tau}\btu_k \eqand{for} k = 0,\ldots,2n-2.
\end{equation}
The initial conditions for this iteration are $\btu_0 = \ti{\la{b}}$ and
$\btu_1 = \btu_{-1}$ since, by the second equation in
\eqref{eqn:tstepping1st_discrete} (applied for $k = 0$ and $k = -1$),
\[
    \frac{\btu_1 - \btu_{-1}}{\tau} = \frac{\btu_1 - \btu_0}{\tau} +
    \frac{\btu_0 - \btu_{-1}}{\tau} =
    -\ti{\la{V}}\ha{\la{\Lambda}}_{\tau}^T\left(\bhw_0 +
    \bhw_{-1}\right) = 0.
\]
The operator on the right-hand side of \eqref{eqn:almost_primary}
satisfies
\begin{align*}
   -\tv\Lt\frac{1}{\tv}\Lt^T 
   &\approx  -\ti{\la{V}}\ha{\la{\Lambda}}_{\tau}^T\ha{\la{V}}^{-1}
   \ti{\la{\Lambda}}_{\tau} \\
   &= -\frac{4}{\tau^2}
   \sin\left(0.5\tau\sqrt{\btA}\right)\btA^{-1/2}
   \underbrace{\ha{\la{R}}\ti{\la{S}}}_{=\btA} 
   \btA^{-1/2}\sin\left(0.5\tau\sqrt{\btA}\right) \\
   &= -\frac{2}{\tau^2}\left[\la{I} -
   \cos\left(\tau\sqrt{\btA}\right)\right] \\
   &= \xi\left(\ti{\la{P}}\right),
\end{align*}
where $\ti{\la{P}} \equiv \cos\left(\tau\sqrt{\btA}\right)$.  This, in
combination with \eqref{eqn:almost_primary}, implies $\btu_k$ satisfies
the recursion
\[
    \frac{\btu_{k+1} - 2\btu_k + \btu_{k-1}}{\tau^2} 
    = \xi\left(\ti{\la{P}}\right)\btu_k 
    \eqfor k = 0,\ldots,2n-2, \qquad \btu_0 =
    \ti{\la{b}}, \ \btu_1 = \btu_{-1},
\]
which is a discrete approximation of \eqref{eqn:primary_recursion}.
Note in the continuum limit we have $-\tv\Lt\frac{1}{\tv}\Lt^T =
\xi\left(\ti{P}\right)$.

We now apply the operator $\ha{\la{V}}^{-1}\ti{\la{\Lambda}}_{\tau}$ to
the second equation in \eqref{eqn:tstepping1st_discrete} and simplify
using the first equation in \eqref{eqn:tstepping1st_discrete} to find
\begin{equation}\label{eqn:almost_dual}
     \frac{\bhw_{k+1} - 2\bhw_k + \bhw_{k-1}}{\tau^2} 
     = -\ha{\la{V}}^{-1}\ti{\la{\Lambda}}_{\tau}\ti{\la{V}}
    \ha{\la{\Lambda}}_{\tau}^T\bhw_k \eqfor k = 0,\ldots,2n-2.
\end{equation}
The initial conditions for this recursion are $\bhw_0 + \bhw_1 = 0$ and
(taking $k = 0$ in the first equation in
\eqref{eqn:tstepping1st_discrete} and using \eqref{eqn:AtoC})
\[
    \bhw_0 = \frac{\tau}{2}\ha{\la{V}}^{-1}\ti{\la{\Lambda}}_{\tau}
    \ti{\la{b}} = \ti{\la{S}}\btA^{-1/2}
    \sin\left(0.5\tau\sqrt{\btA}\right)\ti{\la{b}}
    = \sin\left(0.5\tau\sqrt{\bhC}\right)\bhC^{-1/2}\ti{\la{S}}
    \ti{\la{b}}.
\]
This is a discrete approximation to $\tw_0 =
\sin\left(0.5\tau\sqrt{\tC}\right)\tC^{-1/2}\dfrac{1}{\tv}
\der{}{\tb}{\tx}$.  Moreover, by \eqref{eqn:AtoC} we have
\begin{align*}
    -\frac{1}{\tv}\Lt\tv\Lt^T 
    &\approx -\ha{\la{V}}^{-1}\ti{\la{\Lambda}}_{\tau}\ti{\la{V}}
    \ha{\la{\Lambda}}_{\tau}^T \\
    &= -\frac{4}{\tau^2}\ti{\la{S}}\btA^{-1/2}
    \sin\left(0.5\tau\sqrt{\btA}\right)
    \sin\left(0.5\tau\sqrt{\btA}\right)\btA^{-1/2}\ha{\la{R}} \\
    &= -\frac{4}{\tau^2}\sin\left(0.5\tau\sqrt{\bhC}\right)
    \bhC^{-1/2}\underbrace{\ti{\la{S}}\ha{\la{R}}}_{=\bhC}\bhC^{-1/2}
    \sin\left(0.5\tau\sqrt{\bhC}\right) \\
    &= \xi\left(\ti{\la{P}}_C\right),
\end{align*}
where $\ha{\la{P}}_C \equiv \cos\left(\tau\sqrt{\bhC}\right)$.
Then \eqref{eqn:almost_dual} implies $\hw_k$ satisfies the recursion
\[
    \begin{aligned}
        &\frac{\bhw_{k+1} - 2\bhw_k + \bhw_{k-1}}{\tau^2} 
            = \xi\left(\ha{\la{P}}_C\right)\bhw_k 
	    \eqfor k = 0,\ldots,2n-2, \\
        &\bhw_0 + \bhw_{-1} = 0, \ 
            \bhw_{0} =\sin\left(0.5\tau\sqrt{\bhC}\right)\bhC^{-1/2}
            \ti{\la{S}}\ti{\la{b}},
    \end{aligned}
\]
which is a discrete approximation of \eqref{eqn:dual_recursion}.  Again,
in the continuum limit, we have $-\frac{1}{\tv}\Lt\tv\Lt^T =
\xi\left(\ti{P}_C\right)$.  

Finally, we must prove that $\Lt^T$ is indeed the adjoint of $\Lt$ with
respect to the inner product $\Lttx{\cdot}{\cdot}$.  Let $\ti{f}, \ha{g}
\in L^2[0,\txmax]$ such that $\Lt\ti{f}$, $\Lt^T\ha{g} \in
L^2[0,\txmax]$ with $\ti{f}$ satisfying the boundary conditions in
\eqref{eqn:tA_reminder} and $\ha{g}$ satisfying the boundary conditions
in \eqref{eqn:tC_reminder}.  Also, let $\ti{\la{f}} \equiv
\left[\ti{f}(\tx^1),\ldots,\ti{f}(\tx^m)\right]^T$ and $\ha{\la{g}}
\equiv \left[\ha{g}(\hx^1),\ldots,\ha{g}(\hx^m)\right]^T$.  We define
the inner products 
\[
    \left\langle \ha{\la{f}}, \ha{\la{g}}\right\rangle_{\ti{h}} 
    \equiv \ds\sum_{j=1}^m \ha{f}^j \ha{g}^j \ti{h}^j \eqand{and}
    \left\langle \ti{\la{f}}, \ti{\la{g}}\right\rangle_{\ha{h}}
    \equiv \ds\sum_{j=1}^m \ti{f}^j \ti{g}^j \ha{h}^j.
\]
Then, using \eqref{eqn:matrices}, \eqref{eqn:Lambdas}, and the fact that
functions of $\btA$ are self adjoint with respect to $\left\langle
\cdot, \cdot \right\rangle_{\ha{h}/\tv}$, we obtain
\begin{align*}
    \Lttx{\Lt\ti{f}}{\ha{g}} 
    &\approx \left\langle \ti{\la{\Lambda}}_{\tau}\ti{\la{f}},
    \ha{\la{g}} \right\rangle_{\ti{h}} \\
    &=\left\langle 
    \frac{2}{\tau}\ti{\la{\Delta}}\btA^{-1/2}
    \sin\left(0.5\tau\sqrt{\btA}\right)\ti{\la{f}}, 
    \ha{\la{g}}\right\rangle_{\ti{h}} \\
    &=\frac{2}{\tau}\left\langle \la{T}\btA^{-1/2}
    \sin\left(0.5\tau\sqrt{\btA}\right)\ti{\la{f}},
    \ha{\la{g}}\right\rangle_{l^2(\mathbb{R}^m)} \\
    &=\frac{2}{\tau}\left\langle \btA^{-1/2}
    \sin\left(0.5\tau\sqrt{\btA}\right)\ti{\la{f}},
    \la{T}^T\ha{\la{g}}\right\rangle_{l^2(\mathbb{R}^m)} \\
    &= \frac{2}{\tau}\left\langle \btA^{-1/2}
    \sin\left(0.5\tau\sqrt{\btA}\right)\ti{\la{f}},
    \ti{\la{V}}\diag(1/\ha{h}^1,\ldots,1/\ha{h}^m)
    \la{T}^T\ha{\la{g}}\right\rangle_{\ha{h}/\tv} \\
    &= \frac{2}{\tau}\left\langle \btA^{-1/2}
    \sin\left(0.5\tau\sqrt{\btA}\right)\ti{\la{f}},
    \ha{\la{R}}\ha{\la{g}}\right\rangle_{\ha{h}/\tv} \\
    &= \frac{2}{\tau}\left\langle \ti{\la{f}},
    \sin\left(0.5\tau\sqrt{\btA}\right)\btA^{-1/2}\ha{\la{R}}\ha{\la{g}}
    \right\rangle_{\ha{h}/\tv} \\
    &= \left\langle \ti{\la{f}}, \frac{2}{\tau}\ti{\la{V}}^{-1}
    \sin\left(0.5\tau\sqrt{\btA}\right)\btA^{-1/2}\ha{\la{R}}\ha{\la{g}}
    \right\rangle_{\ha{h}} \\
    &= \left\langle \ti{\la{f}}, \ha{\la{\Lambda}}_{\tau}^T\ha{\la{g}}
    \right\rangle_{\ha{h}} \\
    &\approx \Lttx{\ti{f}}{\Lt^T\ha{g}}.
\end{align*}


\subsection{Proof of Proposition~\ref{prop:alg_2}}
\label{subsec:proof_prop_alg_2}

First, we use Algorithm~\ref{alg:algorithm_2} to show that $\ou_1$ and
$\ou_2$ are orthogonal.  We have
\begin{align*}
    \ootv{\ou_2}{\ou_1}
    &= \ootv{\ou_1 - \g_j\tv\Lt^T\ow_1}{\ou_1} \\
    &= \ootv{\ou_1}{\ou_1} - \g_j\Lttx{\Lt^T\ow_1}{\ou_1} \\
    &= \ghat_1^{-1} - \g_j\Lttx{\ow_1}{\Lt\ou_1} \\
    &= \ghat_1^{-1} - \g_j\iptv{\ow_1}{\frac{1}{\tv}\Lt\ou_1} \\
    &= \ghat_1^{-1} - \g_j\ghat_1^{-1}\iptv{\ow_1}{\ow_1} \\
    &= \ghat_1^{-1} - \ghat_1^{-1} \\
    &= 0.
\end{align*}
Similarly, $\iptv{\ow_2}{\ow_1} = 0$.  

Now, suppose for induction that, via Algorithm~\ref{alg:algorithm_2},
we have constructed $\ou_1, \ldots, \ou_j$ such that
$\ootv{\ou_j}{\ou_k} = 0$ for $k = 1,\ldots,j-1$ and
$\ow_1,\ldots,\ow_j$ such that $\iptv{\ow_j}{\ow_k} = 0$ for $k =
1,\ldots,j-1$.  Our goal is to show $\ootv{\ou_{j+1}}{\ou_k} = 0$ for $k
= 1,\ldots,j$.  Proceeding as in the previous paragraph, we find
\begin{align*}
    \ootv{\ou_{j+1}}{\ou_k}
    &= \ootv{\ou_j - \g_j\tv\Lt^T\ow_j}{\ou_k} \\
    &= \ootv{\ou_j}{\ou_k} - \g_j\iptv{\ow_j}{\frac{1}{\tv}\Lt\ou_k} \\
    &= \ootv{\ou_j}{\ou_k} - \g_j\ghat_k^{-1}
        \iptv{\ow_j}{\ow_k - \ow_{k-1}}.
\end{align*}
By the induction hypothesis, the last expression above is zero for $k =
1,\ldots,j-1$, while for $k = j$ it is equal to
\begin{equation*}
    \ghat_j^{-1} - \g_j\ghat_j^{-1}\iptv{\ow_j}{\ow_j} = \ghat_j^{-1} -
        \ghat_j^{-1} = 0.
\end{equation*}
A similar argument shows $\iptv{\ow_{j+1}}{\ow_k} = 0$ for $k =
1,\ldots,j$.  

Finally, the equalities
\[
        \mathrm{span}\left\{\ou_1,\ldots,\ou_n\right\} =
            \ti{\mathcal{K}}_n^u\left(\tu_0,\ti{P}\right)
	\eqand{and}
        \mathrm{span}\left\{\ow_1,\ldots,\ow_n\right\} =
            \ti{\mathcal{K}}_n^w\left(\tw_0,\ti{P}_C\right)
\]
are corollaries of Lemmas~\ref{lem:u_Lanczos} and \ref{lem:w_Lanczos},
respectively, in combination with the fact that the Lanczos algorithm
generates an orthonormal basis for the Krylov subspace
$\mathcal{K}_n(b,B)$, where $b$ is the starting vector and $B$ is the
operator in question \cite{Parlett:1998:SEP}.


\subsection{Proofs of Lemmas~\ref{lem:u_Lanczos} and
\ref{lem:w_Lanczos}} \label{subsec:proof_u_Lanczos}

From Algorithm~\ref{alg:algorithm_2} we have
\begin{align*}
    \ou_{j+1} 
    &= \ou_j - \g_j\tv\Lt^T\ow_j \\
    &= \ou_j - \g_j\tv\Lt^T\left(\ow_{j-1} + \ghat_j\frac{1}{\tv}\Lt
        \ou_j\right) \\
    &= \ou_j - \g_j\tv\Lt^T\ow_{j-1} - \g_j\ghat_j\tv\Lt^T
        \frac{1}{\tv}\Lt\ou_j \\
    &= \ou_j + \g_j\g_{j-1}^{-1}\left(\ou_j - \ou_{j-1}\right) +
	\g_j\ghat_j\xi\left(\ti{P}\right)\ou_j, \numberthis
	\label{eqn:u_Lanczos_int}
\end{align*}
where the last equality follows from \eqref{eqn:xi_factored}.

We define $\vartheta_j = \ou_j/\normootv{\ou_j} = \ghat_j^{1/2}\ou_j$.
Then \eqref{eqn:u_Lanczos_int} becomes
\begin{equation*}
    \ghat_{j+1}^{-1/2}\vartheta_{j+1} 
    = \ghat_j^{-1/2}\left(1 + \g_j\g_{j-1}^{-1}\right)\vartheta_j
        -\g_j\g_{j-1}^{-1}\ghat_{j-1}^{-1/2}\vartheta_{j-1} +
	\g_j\ghat_j^{1/2}\xi\left(\ti{P}\right)\vartheta_j.
\end{equation*}
This can be rearranged as
\begin{equation*}
    \xi\left(\ti{P}\right)\vartheta_j = b_j^u\vartheta_{j+1} +
    a_j^u\vartheta_j + b_{j-1}^u\vartheta_{j-1},
\end{equation*}
where $a_j^u$ and $b_j^u$ are defined as in \eqref{eqn:ab_u}.  Because
the functions $\vartheta_j$ ($j = 1,\ldots,n$) form an orthonormal set
by Proposition~\ref{prop:alg_2}, this is exactly the Lanczos three-term
recurrence relation \cite{Parlett:1998:SEP}.  

Lemma~\ref{lem:w_Lanczos} may be proved similarly.  


\subsection{Proof of Lemma~\ref{lem:poly_alg_2}}
\label{subsec:proof_poly_alg_2}

We use induction to prove this lemma for the primary orthogonalized
snapshots, $\ou_j$.  For the base case, we define $q_1^u(x) \equiv 1$;
then $\ou_1 = q_1^u\left(\xi\left(\ti{P}\right)\right)\ou_1$, $q_1^u$ is
a polynomial of degree $0$, and $q_1^u(0) = 1$.   

Next, let $j \ge 2$.  Suppose for induction that $\ou_k =
q_k^u\left(\xi\left(\ti{P}\right)\right)\ou_1$ for $k = 1,\ldots,j$,
where $q_k^u$ is a polynomial of degree $k-1$ such that $q_k^u(0) = 1$.
Then Algorithm~\ref{alg:algorithm_2} and the induction hypothesis give 
(see \eqref{eqn:u_Lanczos_int})
\begin{equation*}
    \ou_{j+1} 
    = \left(1+\g_j\g_{j-1}^{-1}\right)\ou_j -
        \g_j\g_{j-1}^{-1}\ou_{j-1} + 
	\g_j\ghat_j\xi\left(\ti{P}\right)\ou_j
    = q_{j+1}^u\left(\xi\left(\ti{P}\right)\right)\ou_1,
\end{equation*}
where
\[
    q_{j+1}^u(x) \equiv \left(1+\g_j\g_{j-1}^{-1}\right)q_j^u(x) -
        \g_j\g_{j-1}^{-1}q_{j-1}^u(x) + \g_j\ghat_jxq_j^u(x)
\]
is a polynomial of degree $j$ (since $\g_j$, $\ghat_j \ne 0$).
Moreover, by the induction hypothesis we have
\[
    q_{j+1}^u(0) = \left(1+\g_j\g_{j-1}^{-1}\right)-\g_j\g_{j-1}^{-1}=1.
\]

The proof for the dual orthogonalized snapshots is similar.  


\subsection{Proof of Remark~\ref{rem:polynomials}}
\label{subsec:proof_of_rem_polynomials}

For simplicity, we will work in spatial coordinates instead of in
slowness coordinates for this proof.  We define $p_j^{\xi} \equiv
\ghat_j^{1/2}\ghat_1^{-1/2}q_j^u$; then, thanks to
Lemma~\ref{lem:poly_alg_2}, we have $\vartheta_j =
p_j^{\xi}\left(\xi\left(\ti{P}\right)\right)\vartheta_1$.  

Lemma~\ref{lem:u_Lanczos} and the statement of
Remark~\ref{rem:polynomials} imply that $\vartheta_j$ and $q_j^{\xi}$
satisfy the following recursions, respectively (here $p_j^{\xi} \equiv
p_j^{\xi}\left(\xi\left(P\right)\right)$ and $q_j^{\xi} \equiv
q_j^{\xi}(x)$):
\begin{equation}\label{eqn:recursion_comparison}
    \begin{alignedat}{2}
        &\text{Set } \vartheta_0 = 0 \text{ and } \vartheta_1 
            = c^{-1/2}\ou_1 = p_1^{\xi}\vartheta_1. \qquad
    	&&\text{Set } q_0^{\xi} = 0 \text{ and } q^{\xi}_1 = 1. \\
        &\textbf{for } j = 1,\ldots,n \textbf{ do } \qquad
            &&\textbf{for } j = 1,\ldots,n \textbf{ do } \\
        &\quad 1.\ a_j^u = \oovs{p_j^{\xi}\vartheta_1}
            {\xi\left(P\right) p_j^{\xi}\vartheta_1}; \qquad
    	&&\quad 1.\ \alpha_i^{u} = \ipxit{q_i^{\xi}}{x q_i^{\xi}}; \\
        &\quad 2.\ r = \left[\left(\xi\left(P\right) 
            - a_j^uI\right)p_j^{\xi}
            -b_{j-1}^up_{j-1}^{\xi}\right]\vartheta_1; \qquad
	    &&\quad 2.\ r=\left[(x-\alpha_i^{u})q_i^{\xi} 
	    -\beta_{i-1}^{u}q_{i-1}^{\xi}\right]q_1; \\
        &\quad 3.\ b_j^u = \sqrt{\oovs{r}{r}}; \qquad
            &&\quad 3.\ \beta_i^{u} = \sqrt{\ipxit{r}{r}}; \\
        &\quad 4.\ \vartheta_{j+1} = \dfrac{r}{b_j^u} = 
            p_{j+1}^{\xi}\vartheta_1. \qquad
    	&&\quad 4.\ q_{i+1}^{\xi}=\dfrac{r}{\beta_i^{u}}. \\
        &\textbf{end for} \qquad
        &&\textbf{end for}
    \end{alignedat}
\end{equation}
Because $p_1^{\xi} \equiv 1$ and $q_1^{\xi} \equiv 1$, the above
recursions imply $p_j^{\xi} = q_j^{\xi}$ if $a_j^u = \alpha_j^u$ ($j =
1,\ldots,n$) and $b_j^u = \beta_j^u$ ($j = 1,\ldots,n-1$).  Before
proving this, we note the above recursions imply $p_j^{\xi}$ and
$q_j^{\xi}$ are polynomials of degree $j-1$.  

Mimicking the derivation of \eqref{eqn:eta} in
\S~\ref{subsec:derivation_of_eta}, we find
\begin{equation}\label{eqn:GQ_again}
    \oovs{f(\xi(P))\vartheta_1}{g(\xi(P))\vartheta_1} = 
	\int_{-1}^1 (f\circ\xi)(\mu) (g\circ\xi)(\mu)
	\frac{\eta_0(\mu)}{c} \di{\mu}.
\end{equation}
If $f(\xi)$ and $g(\xi)$ are both polynomials of degree less than or
equal to $n-1$, then $(f\circ\xi)(\mu)$ and $(g\circ\xi)(\mu)$ are both
polynomials of degree less than or equal to $n-1$ with respect to the
independent variable $\mu$ (since $\xi(\mu) = -\frac{2}{\tau^2}(1-\mu)$
is linear in $\mu$); thus $[(f\circ\xi)(g\circ\xi)](\mu)$ is a
polynomial of degree less than or equal to $2n-2$, so the Gaussian
quadrature from \S~\ref{subsec:fdr} computes the integral in
\eqref{eqn:GQ_again} exactly.  In particular, this implies that the
inner products in the recursion on the left-hand side of
\eqref{eqn:recursion_comparison} may be replaced by the Gaussian
quadrature rule, i.e., 
\[
    a_j^u = \oovs{p_j^{\xi}(\xi)\vartheta_1}
            {\xi p_j^{\xi}(\xi)\vartheta_1}
	   = \ipxit{p_j^{\xi}}{\xi p_j^{\xi}}
	   = \frac{1}{c}\sum_{j=1}^n y_j^2 p_j^{\xi}(\xi(\theta_j))
	   \xi(\theta_j) p_j^{\xi}(\xi(\theta_j))
\]
and similarly for $b_j^u$.  Because both recursions in
\eqref{eqn:recursion_comparison} have the same initialization, a
standard induction argument shows $a_j^{u} = \alpha_j^u$ for $j =
1,\ldots,n$ and $b_j^u = \beta_j^u$ for $j = 1,\ldots,n-1$.  As stated
above, this implies $p_j^{\xi} = q_j^{\xi}$ for $j = 1,\ldots,n$.


\subsection{Proof of Proposition~\ref{prop:GS}}
\label{subsec:proof_of_prop_GS}
We will prove the proposition for the dual snapshots; the proof for
the primary snapshots is similar.  

The proof is by induction.  If $j = 1$, then, according to
\eqref{eqn:w_GS}, $\owgs_1 = \tw_0$; on the other hand, thanks to
Algorithm~\ref{alg:algorithm_2} and Lemma~\ref{lem:leapfrog}, 
$\ow_1 = \ghat_1\frac{1}{\tv}\Lt\tu_0 = \frac{2\ghat_1}{\tau}\tw_0 =
d_1^w\owgs_1$.

Next, suppose for induction that $\ow_i = d_i^w\owgs_i$ for 
$i = 1,\ldots,j-1$ and define
\[
    s_i \equiv \frac{\owgs_i}{\normiptv{\owgs_i}}.
\]
Then $\ow_j$ and $\owgs_j$ are in $\mathrm{span}
\{s_1,\ldots,s_{j-1},\tw_{j-1}\}$, so 
\begin{equation}\label{eqn:owgs_int}
    \ow_j - \owgs_j = \ds\sum_{i=1}^{j-1} \rho_i s_i +
    \rho_j\tw_{j-1}
\end{equation}
for some coefficients $\rho_i$.  We take the inner product of both
sides of the above equation with $s_k$ for $k = 1,\ldots,j-1$ and
use the fact that $\iptv{\ow_j}{s_i} = \iptv{\owgs_j}{s_i} = 0$ for
$i = 1,\ldots,j-1$ to find 
\[
    0 = \iptv{\ow_j-\owgs_j}{s_k} = 
    \rho_k + \rho_j\iptv{\tw_{j-1}}{s_k}.
\]
Substituting this into \eqref{eqn:owgs_int} gives
\begin{equation}\label{eqn:Lanczos2GS}
    \ow_j = \owgs_j - \ds\sum_{i=1}^{j-1} \rho_j 
    \iptv{\tw_{j-1}}{s_i}s_i + \rho_j \tw_{j-1} = 
        (1+\rho_j)\owgs_j.
\end{equation}

Next, by \eqref{eqn:w_GS}, Lemma~\ref{lem:snapshots_uw},
\eqref{eqn:Lanczos2GS}, and Lemma~\ref{lem:poly_alg_2}, we have
\begin{equation}\label{eqn:owgs_int_2}
    \owgs_j = Q_j^w\left(\xi\left(\ti{P}_C\right)\right)\ow_1 
    = (1+\rho_j)^{-1}\ow_j 
    = (1+\rho_j)^{-1}q_j^w\left(\xi\left(\ti{P}_C\right)\right)
    \ow_1,
\end{equation}
where
\begin{equation}\label{eqn:Q_j}
    Q_j^w\left(\xi\left(\ti{P}_C\right)\right) \equiv
\frac{\tau}{2\ghat_1} \left[\Ts_{j-1}\left(\ti{P}_C\right) +
\Ts_{j-2}\left(\ti{P}_C\right)\right] - \sum_{i=1}^{j-1}
c_{ij}q_i^w\left(\xi\left(\ti{P}_C\right)\right)
\end{equation}
and, by the induction hypothesis,
\[
    c_{ij} \equiv \iptv{\tw_{j-1}}{\dfrac{\ow_i}{\normiptv{\ow_i}}}
\dfrac{1}{\normiptv{\ow_i}}.
\]
Recall $\xi\left(\ti{P}_C\right) = 0$ if and only if $\ti{P}_C = I$.
Then \eqref{eqn:Q_j}, standard results about Chebyshev polynomials,
Lemma~\ref{lem:poly_alg_2}, and \eqref{eqn:owgs_int_2} imply
\[
    Q_j^w(0) = \dfrac{\tau}{2\ghat_1}(2j-1) -
        \ds\sum_{i=1}^{j-1}c_{ij} \left(\dfrac{1}{\ghat_1}
        \ds\sum_{k=1}^i\ghat_i\right)
        = (1+\rho_j)^{-1}q_j^w(0)
        = (1+\rho_j)^{-1}\left(\dfrac{1}{\ghat_1}
        \ds\sum_{i=1}^j\ghat_i\right).
\]
The conclusion of the proposition follows by taking $d_j^w = 1+\rho_j$.  


\subsection{Proof of Lemma~\ref{lem:data_lemma}}
\label{subsec:lem2_proof}

To show that $u_k = UT_k(\la{H})\la{e}_1$, it suffices to demonstrate
\begin{equation}\label{eqn:Tke1}
    T_k(\la{H})\la{e}_1 = \la{e}_{k+1} \eqfor k = 0,\ldots,n-1,
\end{equation}
because $u_k = U\la{e}_{k+1}$.

We prove \eqref{eqn:Tke1} by induction.  Since we use the Chebyshev
three-term recursion formula
\begin{equation}\label{eqn:Chebyshev_recurrence}
    T_{k+1}(\la{H}) = 2\la{H}T_k(\la{H}) - T_{k-1}(\la{H}),
\end{equation}
the base of induction consists of the two cases $k = 0$, $1$.  

The case $k = 0$ is trivial:
\begin{equation*}
    T_0(\la{H})\la{e}_1 = \la{I}\la{e}_1 = \la{e}_1.
\end{equation*}

For the case $k = 1$ we observe from \eqref{eqn:snapshots_def} that $u_1
= \cos(\tau\sqrt{A})u_0 = Pu_0$, so 
\begin{equation*}
    \begin{aligned}
        T_1(\la{H})\la{e}_1 = \la{H}\la{e}_1 
        &= (U^*U)^{-1}U^*PU\la{e}_1 = (U^*U)^{-1}U^*Pu_0 \\
	    &= (U^*U)^{-1}U^*u_1 = (U^*U)^{-1}U^*U\la{e}_2 = \la{e}_2.
    \end{aligned}
\end{equation*}

For the induction step we use the trigonometric identity 
\begin{equation}\label{eqn:trig_identity}
    \begin{aligned}
        Pu_k &= \cos\left(\tau\sqrt{A}\right)
	    \cos\left(k\tau\sqrt{A}\right)u_0\\ 
	     &= \frac{1}{2}\left[\cos\left((k+1)\tau\sqrt{A}\right) +
                \cos\left((k-1)\tau\sqrt{A}\right)\right]u_0 \\
	     &= \frac{1}{2}\left(u_{k+1}+u_{k-1}\right),
    \end{aligned}
\end{equation}
where the first and last equalities follow from
\eqref{eqn:snapshots_def}.  Then the induction hypotheses are
$T_k(\la{H})\la{e}_1 = \la{e}_{k+1}$ and $T_{k-1}(\la{H})\la{e}_1 =
\la{e}_{k}$, which in conjunction with
\eqref{eqn:Chebyshev_recurrence}--\eqref{eqn:trig_identity} imply, for
$k = 0,\ldots,n-2$, that
\begin{equation*}
    \begin{aligned}
    T_{k+1}(\la{H})\la{e}_1 &= 2\la{H}T_k(\la{H})\la{e}_1 - 
        T_{k-1}(\la{H})\la{e}_1 \\
	&= 2\la{H}\la{e}_{k+1} - \la{e}_{k} \\
	&= 2(U^*U)^{-1}U^*PU\la{e}_{k+1}-\la{e}_k \\
	&= 2(U^*U)^{-1}U^*Pu_k - \la{e}_k \\
	&= (U^*U)^{-1}U^*(u_{k+1}+u_{k-1}) - \la{e}_k \\
	&= (U^*U)^{-1}U^*(U\la{e}_{k+2}+U\la{e}_k) - \la{e}_k \\
	&= (\la{e}_{k+2}+\la{e}_k) - \la{e}_k = \la{e}_{k+2}.
    \end{aligned}
\end{equation*}

For $k = 0,\ldots, n-1$, the formula for $f_k$ is an immediate
consequence of \eqref{eqn:f_k}, the fact that $u_0 = U\la{e}_1$, and the
first part of this lemma:
\begin{equation*}
    f_k = u_0^*u_k 
        = (U\la{e}_1)^*UT_k(\la{H})\la{e}_1 
        = \la{e}_1^T(U^*U)T_k(\la{H})\la{e}_1.
\end{equation*}

The proof that \eqref{eqn:match} holds for $k = n,\ldots,2n-1$ is more
subtle.  First, we define the operator 
\[
    \ha{H} \equiv U(U^*U)^{-1}U^*P,
\]
so $U\la{H} = \ha{H}U$.  In fact, if $g$ is a polynomial, we have
$Ug(\la{H}) = g(\ha{H})U$.  Moreover, the operator $\ha{H}$ is
self adjoint with respect to the inner product $\llangle \cdot, \cdot
\rrangle$.  

Next, we note that
\begin{equation}\label{eqn:Chebyshev_relationship}
    T_{n+j}(x) = T_{j+1}^{(2)}(x)T_{n-1}(x) -
        T_j^{(2)}(x)T_{n-2}(x)
\end{equation}
for all $j \ge 0$, where $T_j^{(2)}$ is the $j\textsuperscript{th}$
Chebyshev polynomial of the second kind (the identity
\eqref{eqn:Chebyshev_relationship} can be proved by induction on $j$).  
Then 
\begin{align*}
    \la{e}_1^TU^*UT_{n+j}(\la{H})\la{e}_1 &= 
        \left\llangle U\la{e}_1, UT_{n+j}(\la{H})\la{e}_1\right\rrangle
	\\
    &= \left\llangle U\la{e}_1, T_{n+j}(\ha{H})U\la{e}_1\right\rrangle\\
    &= \left\llangle U\la{e}_1, \left[T_{j+1}^{(2)}(\ha{H})
        T_{n-1}(\ha{H}) -
        T_j^{(2)}(\ha{H})T_{n-2}(\ha{H})\right]U\la{e}_1\right\rrangle\\
    &= \left\llangle T_{j+1}^{(2)}(\ha{H})U\la{e}_1,
        T_{n-1}(\ha{H})U\la{e}_1\right\rrangle -
	\left\llangle T_j^{(2)}(\ha{H})U\la{e}_1, 
	T_{n-2}(\ha{H})U\la{e}_1\right\rrangle\\
    &= \left\llangle UT_{j+1}^{(2)}(\la{H})\la{e}_1,
	UT_{n-1}(\la{H})\la{e}_1\right\rrangle -
	\left\llangle UT_j^{(2)}(\la{H})\la{e}_1, 
	UT_{n-2}(\la{H})\la{e}_1\right\rrangle.
	\numberthis \label{eqn:another_intermediate}
\end{align*}
Using the identities \eqref{eqn:Chebyshev_identities}
and the fact that $T_k(\la{H})\la{e}_1 = \la{e}_{k+1}$ for $k =
0,\ldots,n-1$, we find 
\[
    UT_j^{(2)}(\la{H})\la{e}_1 = T_j^{(2)}(P)U\la{e}_1
\]
for $j = 0,\ldots,n-1$.  
Using this, the fact that $P$ is self adjoint with respect to
$\left\llangle \cdot, \cdot \right\rrangle$, and
\eqref{eqn:Chebyshev_relationship} in
\eqref{eqn:another_intermediate} gives
\begin{align*}
    \la{e}_1^TU^*UT_{n+j}(\la{H})\la{e}_1 &=
    \left\llangle T_{j+1}^{(2)}(P)U\la{e}_1, U\la{e}_n \right\rrangle -
	\left\llangle T_j^{(2)}(P)U\la{e}_1, U\la{e}_{n-1}\right\rrangle
	\\
    &= \left\llangle U\la{e}_1, T_{j+1}^{(2)}(P)u_{n-1}\right\rrangle -
	\left\llangle U\la{e}_1, T_j^{(2)}(P)u_{n-2}\right\rrangle \\
    &= \left\llangle u_0, \left[T_{j+1}^{(2)}(P)T_{n-1}(P) - 
	T_j^{(2)}(P)T_{n-2}(P)\right]u_0\right\rrangle
	\\
    &= u_0^*T_{n+j}(P)u_0 \\
    &= u_0^*u_{n+j} \\
    &= f_{n+j}
\end{align*}
for $j = 0,\ldots,n-1$.  


\subsection{Proof of
Lemma~\ref{lem:specH}}\label{subsec:lem_specH_proof}

Since $\eta_0$ satisfies the hypothesis of
Lemma~\ref{lem:points_of_increase}, $U$ is of full rank; thus $U^*U \in
\mathbb{R}^{n\times n}$ is a symmetric, positive-definite matrix.  Let
$\la{x}$, $\la{z} \in \mathbb{R}^n$.  Then, since $U^*PU \in
\mathbb{R}^{n\times n}$ is symmetric, we have
\begin{align*}
    \usuh{\la{H}\la{x}}{\la{z}} 
    &= \ltrn{(U^*U)^{1/2}\la{H}\la{x}}{(U^*U)^{1/2}\la{z}}\\
    &= \ltrn{(U^*U)^{-1/2}(U^*PU)\la{x}}{(U^*U)^{1/2}\la{z}}\\
    &= \ltrn{(U^*PU)\la{x}}{\la{z}} \\
    &= \ltrn{\la{x}}{(U^*PU)\la{z}} \\
    &= \ltrn{(U^*U)^{1/2}\la{x}}{(U^*U)^{-1/2}(U^*PU)\la{z}} \\
    &= \usuh{\la{x}}{(U^*U)^{-1}(U^*PU)\la{z}} \\
    &= \usuh{\la{x}}{\la{H}\la{z}};
\end{align*}
thus $\la{H}$ is self adjoint with respect to $\usuh{\cdot}{\cdot}$.  

Next, we symmetrize $\la{H}$ by defining 
\begin{equation}\label{eqn:H_tilde}
    \ti{\la{H}} \equiv (U^*U)^{1/2}\la{H}(U^*U)^{-1/2} 
        = (U^*U)^{-1/2}(U^*PU)(U^*U)^{-1/2} = \ti{\la{H}}^T.
\end{equation}
Because $\ti{\la{H}}$ is symmetric, it can be orthogonally diagonalized
as 
\begin{equation}\label{eqn:eigen_H_tilde}
    \ti{\la{H}} = \ti{\la{\Phi}}\ti{\la{\Theta}}\ti{\la{\Phi}}^T, 
    \eqand{where}
    \ti{\la{\Phi}}^T\ti{\la{\Phi}} = \la{I}_{n\times n}
\end{equation}
and $\la{\Theta}$ is a diagonal matrix of the eigenvalues of $\la{H}$
(which are the same as those of $\ti{\la{H}}$ since $\la{H}$ and
$\ti{\la{H}}$ are similar).  If we define $\la{\Phi} \equiv
(U^*U)^{-1/2}\ti{\la{\Phi}}$, then \eqref{eqn:H_tilde} and
\eqref{eqn:eigen_H_tilde} imply
\begin{equation*}
    \la{H} = \la{\Phi}\la{\Theta}\la{\Phi}^T(U^*U),
    \eqand{where}
    \la{\Phi}^T(U^*U)\la{\Phi} = \la{I}_{n\times n}.
\end{equation*}


\subsection{Proof of Lemma~\ref{lem:TplusH}}\label{subsec:lem3_proof}

In order to compute $U^*PU$ and $U^*U$ we will need the inner products
of the snapshots.  Using \eqref{eqn:snapshots_def}, the fact that $A$
(and functions of $A$) are self adjoint with respect to
$\llrr{\cdot}{\cdot}$, and the fact that functions of $A$ commute, we
find, for $j$, $k = 0,\ldots,n-1$, that
\begin{equation}\label{eqn:snapshot_inner_products}
    \begin{aligned}
        \llangle u_j, u_k \rrangle 
	&= \llrr{v(0)\cos\left(j\tau\sqrt{A}\right)
	    \tq(A)^{1/2}\delta} 
	    {v(0)\cos\left(k\tau\sqrt{A}\right)\tq(A)^{1/2}\delta}
	    \\
	&= \left\langle \delta, \cos\left(j\tau\sqrt{A}\right)
	    \cos\left(k\tau\sqrt{A}\right)\tq(A)\delta\right\rangle.
    \end{aligned}
\end{equation}
Applying the trigonometric identity
\begin{equation*}
    \cos\left(j\tau\sqrt{A}\right)\cos\left(k\tau\sqrt{A}\right) = 
       \frac{1}{2}\left[\cos\left((j+k)\tau\sqrt{A}\right) +
        \cos\left((j-k)\tau\sqrt{A}\right)\right]
\end{equation*}
to \eqref{eqn:snapshot_inner_products} we obtain
\begin{equation*}
    \begin{aligned}
        \llangle u_j, u_k \rrangle 
	&= \frac{1}{2}\left[\left\langle \delta,
	    \cos\left((j+k)\tau\sqrt{A}\right)\tq(A)\delta\right\rangle+
	    \left\langle \delta, \cos\left((j-k)\tau\sqrt{A}\right)
	    \tq(A)\delta \right\rangle\right] \\
	&= \frac{1}{2}\left(\left\langle \delta, u_{j+k}\right\rangle + 
	    \left\langle \delta, u_{j-k}\right\rangle\right) \\
	&= \frac{1}{2}\left[u_{j+k}(0) + u_{j-k}(0)\right],
    \end{aligned}
\end{equation*}
where the snapshots with negative indices are defined
using the evenness of cosine, i.e., we take $u_l(x) \equiv 
u_{-l}(x)$ for $l < 0$.  Thus
\begin{equation}\label{eqn:snapshot_inner_products_result}
    \llangle u_j, u_k \rrangle
        = \frac{1}{2}(f_{j+k} + f_{j-k}).
\end{equation}

Let us consider $U^*PU$ first.  Applying the formula in
\eqref{eqn:trig_identity} to $PU$, we get
\begin{equation}\label{eqn:UsPU_intermediate}
    U^*PU = \frac{1}{2}U^*\left(\left[u_{-1},u_0\ldots,u_{n-2}\right] 
        + \left[u_1,u_2\ldots,u_{n}\right]\right).
\end{equation}

Using the inner product formula
\eqref{eqn:snapshot_inner_products_result}, the first product on the
right-hand side of \eqref{eqn:UsPU_intermediate} becomes
\begin{align*}
    &U^*[u_{-1},u_0\ldots,u_{n-2}] = \\
    &\frac{1}{2}
    \begin{bmatrix}
        f_1+f_{-1} & f_0+f_0 & f_{-1}+f_1 & f_{-2}+f_2 & \hdots &
            f_{-n+2}+f_{n-2} \\
        f_2+f_0 & f_1+f_1 & f_0+f_2 & f_{-1}+f_3 & \hdots & 
            f_{-n+3} + f_{n-1} \\
        f_3+f_1 & f_2+f_2 & f_1+f_3 & f_0+f_4 & \hdots & 
            f_{-n+4}+f_n \\
        \vdots & \vdots & \vdots & \vdots & \ddots & \vdots \\
        f_n+f_{n-2} & f_{n-1}+f_{n-1} & f_{n-2}+f_n &f_{n-3}+f_{n+1} 
            & \hdots & f_1+f_{2n-3} \\
    \end{bmatrix}.\numberthis\label{eqn:first_product}
\end{align*}
Similarly, for the second product in \eqref{eqn:UsPU_intermediate} we
have 
\begin{align*}
    &U^*[u_1,u_2\ldots,u_n] = \\
    &\frac{1}{2}
    \begin{bmatrix}
        f_{-1}+f_1 & f_{-2}+f_2 & f_{-3}+f_3 & f_{-4}+f_4 & \hdots &
            f_{-n}+f_n \\
        f_0+f_2 & f_{-1}+f_3 & f_{-2}+f_4 & f_{-3}+f_5 & \hdots & 
            f_{-n+1} + f_{n+1} \\
        f_1+f_3 & f_0+f_4 & f_{-1}+f_5 & f_{-2}+f_6 & \hdots & 
	    f_{-n+2}+f_{n+2} \\
        \vdots & \vdots & \vdots & \vdots & \ddots & \vdots \\
        f_{n-2}+f_n & f_{n-3}+f_{n+1} & f_{n-4}+f_{n+2} & 
	    f_{n-5}+f_{n+3} & \hdots & f_{-1}+f_{2n-1} \\
    \end{bmatrix}.\numberthis\label{eqn:second_product}
\end{align*}
The same inner product formula applied to $U^*U$ yields
\begin{align*}
    &U^*U = \\
    &\frac{1}{2}
    \begin{bmatrix}
        f_0+f_0 & f_{-1}+f_1 & f_{-2}+f_2 & f_{-3}+f_3 & \hdots &
            f_{-n+1}+f_{n-1} \\
        f_1+f_1 & f_0+f_2 & f_{-1}+f_3 & f_{-2}+f_4 & \hdots & 
            f_{-n+2}+f_n \\
        f_2+f_2 & f_1+f_3 & f_0+f_4 & f_{-1}+f_5 & \hdots & 
	    f_{-n+3}+f_{n+1} \\
        \vdots & \vdots & \vdots & \vdots & \ddots & \vdots \\
        f_{n-1}+f_{n-1} & f_{n-2}+f_n & f_{n-3}+f_{n+1} & 
	    f_{n-4}+f_{n+2} & \hdots & f_0+f_{2n-2} \\
    \end{bmatrix}.\numberthis\label{eqn:third_product}
\end{align*}

Finally, using the evenness of the cosine (i.e., $f_l = f_{-l}$ for $l <
0$), we observe that each of
\eqref{eqn:first_product}--\eqref{eqn:third_product} can be expressed as
a sum of a Toeplitz matrix and a Hankel matrix:
\begin{equation*}
    \begin{aligned}
        &U^*\left[u_{-1},u_0,\ldots,u_{n-2}\right] 
	    =\frac{1}{2}\left(\la{T}^+ + \la{H}^-\right),\\
        &U^*\left[u_1,u_2,\ldots,u_n\right] 
	    = \frac{1}{2}\left(\la{T}^- +\la{H}^+\right),\\
	&U^*U = \frac{1}{2}\left(\la{T}^0+ \la{H}^0\right).
    \end{aligned}
\end{equation*}


\subsection{Derivation of Algorithm~\ref{alg:algorithm_1}}
\label{subsec:algorithm_1_derivation}

Let $\left(-\lambda_l, \br_l\right)$, $l =
1,\ldots,n$, be an eigenpair of $\bM$, i.e.,
\begin{equation}\label{eqn:M_eig}
    \bM\br_l + \lambda_l\br_l = 0.
\end{equation}
Since $\bM$ is similar to $\xi\left(\la{P}_n\right)$,
Lemma~\ref{lem:points_of_increase} implies $-\lambda_l = \xi(\theta_l)
\in \left[-\frac{2}{\tau^2},0\right]$; 
Lemma~\ref{lem:points_of_increase} also implies the eigenvalues
$\lambda_l$ are distinct.

We introduce the auxiliary variables
\begin{equation}\label{eqn:s}
    s_{l,j} \equiv \dfrac{r_{l,j+1}-r_{l,j}}
        {\sqrt{\lambda_l}\g_j} 
    \eqand{and}
    \ha{s}_{l,j} \equiv \dfrac{r_{l,j+1}-r_{l,j}}
        {-\sqrt{\lambda_l}\g_j}
    \eqfor l = 1,\ldots,n, \quad j = 1,\ldots,n.
\end{equation}
Let 
\begin{equation}\label{eqn:bg}
    \bg_l \equiv \delta\left[r_{l,1}, s_{l,1}, \ldots, r_{l,n}, 
        s_{l,n}\right]^T 
    \eqand{and} 
    \hbg_l \equiv \delta\left[r_{l,1}, \ha{s}_{l,1}, \ldots, r_{l,n}, 
        \ha{s}_{l,n}\right]^T, 
\end{equation}	
where $\delta$ is a constant we will determine later.  We also define
\[
    \la{O} \equiv 
    \begin{bmatrix}
        0    & 1 &        &        &        &   \\
	1    & 0 & -1     &        &        &   \\
	& -1 & 0 & 1      &        &        &   \\
	&    & 1 & 0      & -1     &        &   \\
	&    &   & \ddots & \ddots & \ddots &   \\
	&    &   &        & -1     & 0      & 1 \\
	&    &   &        &        & 1      & 0 
    \end{bmatrix}
    = \la{O}^T
    \in \mathbb{R}^{2n\times 2n}
    \eqand{and}
    \la{T} \equiv \la{O}\la{\Gamma}^{-1},
\]
where $\la{\Gamma}$ is defined in \eqref{eqn:OGamma}.  Then, in
combination with \eqref{eqn:s}, \eqref{eqn:M_eig} may be written in
first-order form as
\begin{equation}\label{eqn:crux_alg_1}
    \la{L}\la{Q} = \la{Q}\la{T},
\end{equation}
where 
\begin{equation}\label{eqn:L_bold}
    \la{L} \equiv \diag\left(\sqrt{\lambda_1}, -\sqrt{\lambda_1},
    \sqrt{\lambda_2}, -\sqrt{\lambda_2}, \ldots, \sqrt{\lambda_n},
    -\sqrt{\lambda_n}\right),
\end{equation}
and 
\begin{equation}\label{eqn:Q_def}
    \la{Q} \equiv 
    \begin{bmatrix}
        \td & \bg_1^T  & \td \\
	\td & \hbg_1^T & \td \\
	    & \vdots   & \td \\
	\td & \bg_n^T  & \td \\
	\td & \hbg_n^T & \td 
    \end{bmatrix}
    \equiv
    \begin{bmatrix}
           |    &      |     &        &    |    &     |      \\
	\obmu_1 & \obomega_1 & \cdots & \obmu_n & \obomega_n \\
           |    &      |     &        &    |    &     |      \\
    \end{bmatrix} 
    \in \mathbb{R}^{2n\times 2n}.
\end{equation}
Note that \eqref{eqn:crux_alg_1} is an eigendecomposition of $\la{T}^T =
\la{\Gamma}^{-1}\la{O}$, i.e., $\la{T}^T \la{Q}^T = \la{Q}^T \la{L}$.
This may be written in a different basis as 
\begin{equation}\label{eqn:symmetric_eig}
    \ti{\la{T}}^T\la{\Gamma}^{1/2}\la{Q}^T 
    = \la{\Gamma}^{1/2}\la{Q}^T\la{L}, 
    \eqand{where} 
    \ti{\la{T}}^T \equiv \la{\Gamma}^{1/2}\la{T}^T\la{\Gamma}^{-1/2} 
    = \la{\Gamma}^{-1/2}\la{O}\la{\Gamma}^{-1/2}.
\end{equation}
Since $\ti{\la{T}}^T$ is symmetric and we are assuming all eigenvectors
of symmetric matrices are normalized with Euclidean norm $1$, we have
\[
    \la{I}_{2n\times 2n} 
    = \left(\la{\Gamma}^{1/2}\la{Q}^T\right)^T\la{\Gamma}^{1/2}\la{Q}^T
    = \la{Q}\la{\Gamma}\la{Q}^T;
\]
this implies 
\begin{equation}\label{eqn:normalization}
    \la{Q}^T\la{Q} = \la{\Gamma}^{-1}.
\end{equation}
The algorithm is essentially given by \eqref{eqn:crux_alg_1} and
\eqref{eqn:normalization}; all that remains is for us to initialize the
algorithm appropriately, i.e., we need to compute
\begin{equation}\label{eqn:obmu_1}
    \obmu_1 = \delta[r_{1,1}, r_{1,1}, r_{2,1}, r_{2,1}, \ldots, 
        r_{n,1}, r_{n,1}]^T. 
\end{equation}

We begin by determining the constant $\delta$ from \eqref{eqn:bg}.
First, by \eqref{eqn:Q_def}--\eqref{eqn:symmetric_eig},
\begin{equation}\label{eqn:delta}
    1 = \left\langle\la{\Gamma}^{1/2}\bg_l, \la{\Gamma}^{1/2}\bg_l
        \right\rangle_{l^2\left(\mathbb{R}^{2n}\right)}
    = \delta^2\left(\ltrn{\bDhat^{1/2}\br_l}{\bDhat^{1/2}\br_l} +
	\ltrn{\la{D}^{1/2}\la{s}_l}{\la{D}^{1/2}\la{s}_l}\right)
    = 2\delta^2,
\end{equation}
where $\la{D} \equiv \diag(\g_1,\ldots,\g_n)$.  The last inequality
above holds because $\bDhat^{1/2}\br_l$ is an eigenvector of the
symmetric matrix $\bMt$; similarly, by eliminating $\tmu_{j,k}$ from the
recursion \eqref{eqn:Galerkin_recursion}, it can be shown that
$\left(-\lambda_l, \la{D}^{1/2}\la{s}_l\right)$ is an eigenpair of the
symmetric matrix $\ti{\la{N}} \equiv \la{R}\la{R}^T$, where $-\bMt =
\la{R}^T\la{R}$ with $\la{R}$ upper triangular is the Cholesky
decomposition of $-\bMt$.  (The previous analysis also holds with
$\bg_l$ replaced by $\hbg_l$.)

Next, $\bDhat^{1/2}\br_l$ and $\la{X}_l$ are normalized eigenvectors of
$\bMt = \xi\left(\la{P}_n\right)$ by \eqref{eqn:M_eig} and
\eqref{eqn:P_n_eig}, respectively.  Thus \eqref{eqn:X_j_1}, the fact
that the eigenvalues $-\lambda_l$ are distinct (by
Lemma~\ref{lem:points_of_increase}), and \eqref{eqn:ghat_1} imply
\begin{equation}\label{eqn:r_m1}
    y_l^2/c = \left(\la{e}_1^T\la{X}_l\right)^2
    = \left(\la{e}_1^T\bDhat^{1/2}\br_l\right)^2
    = \ghat_1r_{l,1}^2 \Rightarrow r_{l,1} = y_l.
\end{equation}
Then \eqref{eqn:crux_alg_1}--\eqref{eqn:Q_def} and 
\eqref{eqn:normalization}--\eqref{eqn:r_m1} give us the algorithm for
computing $\ghat_j$ and $\g_j$, which we summarize in
Algorithm~\ref{alg:algorithm_1}.  Algorithm~\ref{alg:algorithm_1} is
isomorphic to Algorithm~\ref{alg:algorithm_2}; this is due to the close
relationship between \eqref{eqn:crux_alg_1} and \eqref{eqn:crux_alg_2}.


\bibliography{KGL_SIAM_updated_arxiv}
\bibliographystyle{siam}

\end{document}